\theoremstyle{plain}
\newtheorem{theorem}{Theorem}[section]
\newtheorem{corollary}[theorem]{Corollary}
\newtheorem{lemma}[theorem]{Lemma}
\newtheorem{proposition}[theorem]{Proposition}
\newtheorem{problem}[theorem]{Problem}
\theoremstyle{definition}
\newtheorem{example}[theorem]{Example}
\newtheorem{construction}[theorem]{Construction}
\numberwithin{equation}{section}
\def\ldiv{\backslash}
\def\rdiv{/}
\def\A{\mathbf A}
\def\B{\mathbf B}
\def\Z{\mathbb Z}
\def\img{\mathrm{Img}}
\def\TC{\mathrm{TC}}
\def\W{\mathcal{W}}
\def\V{\boldsymbol{V}}
\def\mlt#1{\mathrm{Mlt}(#1)}
\def\inn#1{\mathrm{Inn}(#1)}
\def\aut#1{\mathrm{Aut}(#1)}
\def\totmlt#1{\mathrm{TMlt}(#1)}
\def\totinn#1{\mathrm{TInn}(#1)}
\def\Ng#1{\mathrm{Ng}(\;#1\;)}
\def\Cg#1{\mathrm{Cg}(\;#1\;)}
\def\sym#1{\mathrm{Sym}(#1)}
\def\treeL#1#2#3#4#5{
    \begin{tikzpicture}
    [scale = 0.8, operation/.style={circle,fill=gray!10,inner sep=0pt,minimum size=5mm}, element/.style={}]
        \node (1) at (0,0) [operation] {#4};
        \node (2L) at (-1,-1) [operation] {#2} edge (1);
        \node (2R) at (1,-1) [element] {#5} edge (1);
        \node (3L) at (-2,-2) [element] {#1} edge (2L);
        \node (3R) at (0,-2) [element] {#3} edge (2L);
    \end{tikzpicture}
}
\def\treeR#1#2#3#4#5{
    \begin{tikzpicture}
    [scale = 0.8, operation/.style={circle,fill=gray!10,inner sep=0pt,minimum size=5mm}, element/.style={}]
        \node (1) at (0,0) [operation] {#2};
        \node (2L) at (-1,-1) {#1} edge (1);
        \node (2R) at (1,-1) [operation] {#4} edge (1);
        \node (3L) at (0,-2) {#3} edge (2R);
        \node (3R) at (2,-2) {#5} edge (2R);
    \end{tikzpicture}
}
\begin{document}

\title{Commutator theory for loops}

\author{David Stanovsk\'y}

\author{Petr Vojt\v{e}chovsk\'y}

\address[Stanovsk\'y, Vojt\v{e}chovsk\'y]{Department of Mathematics, University of Denver, 2360 S Gaylord St, Denver, Colorado 80208, U.S.A.}
\address[Stanovsk\'y]{Department of Algebra, Faculty of Mathematics and Physics, Charles University, Praha 8, Sokolovsk\'a 83, 186 75, Czech Republic}
\email[Stanovsk\'y]{stanovsk@karlin.mff.cuni.cz}
\email[Vojt\v{e}chovsk\'y]{petr@math.du.edu}

\begin{abstract}
Using the Freese-McKenzie commutator theory for congruence modular varieties as the starting point, we develop commutator theory for the variety of loops. The fundamental theorem of congruence commutators for loops relates generators of the congruence commutator to generators of the total inner mapping group. We specialize the fundamental theorem into several varieties of loops, and also discuss the commutator of two normal subloops.

Consequently, we argue that some standard definitions of loop theory, such as elementwise commutators and associators, should be revised and linked more closely to inner mappings. Using the new definitions, we prove several natural properties of loops that could not be so elegantly stated with the standard definitions of loop theory. For instance, we show that the subloop generated by the new associators defined here is automatically normal.
We conclude with a preliminary discussion of abelianess and solvability in loops.
\end{abstract}

\keywords{Commutator theory, congruence commutator, loop, commutator of normal subloops, commutator, associator, associator subloop, derived subloop, inner mapping, inner mapping group, total inner mapping group}

\subjclass[2000]{Primary: 08B10, 20N05. Secondary: 08A30.}

\thanks{Research partially supported by the Simons Foundation Collaboration Grant 210176 to Petr Vojt\v{e}chovsk\'y.}

\maketitle

\section{Introduction}

The two primary influences on modern loop theory come from group theory and universal algebra, a fact that is reflected already in the definition of a loop. Using the group-theoretical approach, a \emph{loop} is a nonempty set $Q$ with identity element $1$ and with binary operation $\cdot$ such that for every $a$, $b\in Q$ the equations $a\cdot x=b$, $y\cdot a=b$ have unique solutions $x$, $y\in Q$. The implied presence of divisions is made explicit in the equivalent universal algebraic definition due to Evans \cite{E}: a \emph{loop} is a universal algebra $(Q,1,\cdot,\ldiv,\rdiv)$ satisfying the identities
$$x\cdot 1 = x = 1\cdot x,\quad x\ldiv(x\cdot y)=y,\quad x\cdot(x\ldiv y)=y,\quad (y\cdot x)/x=y,\quad (y/x)\cdot x=y.$$
It is not difficult to see that associative loops are precisely groups, where we write $x^{-1}y$ and $xy^{-1}$ in place of $x\ldiv y$ and $x\rdiv y$, respectively.

The most influential text on loop theory in the English-speaking world is the book of Bruck \cite{B}. Although its title ``A survey of binary systems'' and its opening chapters are rather encompassing, it focuses on and culminates in the study of Moufang loops, a variety of loops with properties close to groups. It is therefore natural that Bruck's definitions are rooted mostly in group theory. For instance,
a subloop $N$ of a loop $Q$ is said to be \emph{normal} in $Q$ if
\begin{displaymath}
    xN = Nx,\quad x(yN) = (xy)N,\quad N(xy) = (Nx)y
\end{displaymath}
for every $x$, $y\in Q$,
the \emph{center} $Z(Q)$ of $Q$ is defined as
\begin{displaymath}
    Z(Q) = \{a\in Q;\;ax{=}xa,\,a(xy){=}(ax)y,\,x(ay){=}(xa)y,\,x(ya){=}(xy)a\text{ for every }x,y\in Q\},
\end{displaymath}
and the elementwise \emph{commutator} $[x,y]$ and \emph{associator} $[x,y,z]$ are defined as the unique solutions to the equations
\begin{displaymath}
    xy = (yx)[x,y],\quad (xy)z = (x(yz))[x,y,z],
\end{displaymath}
respectively. The \emph{associator subloop} $A(Q)$ of $Q$ is the smallest normal subloop of $Q$ such that $Q/A(Q)$ is a group, or, equivalently, the smallest normal subloop of $Q$ containing all associators $[x,y,z]$ of $Q$. The \emph{derived subloop} $Q'$ of $Q$ is the smallest normal subloop of $Q$ such that $Q/Q'$ is an abelian group, or, equivalently, the smallest normal subloop of $Q$ containing all commutators $[x,y]$ and all associators $[x,y,z]$ of $Q$. With $Q^{[0]}=Q$, $Q^{[i+1]}=(Q^{[i]})'$, a loop is called \emph{solvable} if $Q^{[n]}=1$ for some $n$.

It is easy to see that in groups the above concepts specialize to the usual group-theoretical notions (the associator and the associator subloop being void). Bruck's deep results \cite{B} showed that the group-theoretical definitions are sensible in Moufang loops, as did Glauberman's extension of the Feit-Thompson Odd Order Theorem to Moufang loops \cite{G2}.

As it turns out, the normality, the center and the derived notion of central nilpotency are the correct concepts for loops even from the universal algebraic point of view. (For normality this was known already to Bruck. For the center and central nilpotency this is probably folklore, but since we were not able to find a proof in the literature, we present it at the end of this paper for the convenience of the reader.) It is therefore not surprising that central nilpotency played a prominent role in the development of loop theory, as witnessed by the $42$ papers listed in MathSciNet under primary classification $20$N$05$ and with one of the words ``nilpotent'', ``nilpotency'' or ``nilpotence'' in the title. We mention \cite{C,CD,G2,GW,JKV,M,NV,NVo,N,NR,S2} as a representative sample.

But the commutators and associators did not fare as well, and neither did the concept of solvability. The inadequacies of the elementwise associators were first pointed out by Leong \cite{L}; see below for more details. There is no established notion of commutator of two normal subloops and, in contrast to nilpotency, there are only $9$ papers on MathSciNet under $20$N$05$ and with one of the words ``solvable'', ``solvability'', ``soluble'' or ``solubility'' in the title.

We maintain that this is not a coincidence, but rather a consequence of the fact that the elementwise associators, the commutator theory and solvability were not well conceived in loop theory. This is somewhat surprising, since loops are known to be congruence modular (they possess a Mal'tsev term), the general commutator theory for congruence modular varieties \cite{FM} has been developed more than $25$ years ago, and, furthermore, the original impetus for the commutator theory came from an important work of Smith \cite{S}, who set out to understand abelianess (or, centrality, in his terms) in quasigroups, a variety closely related to loops. For more historical details concerning commutator theory, see \cite{FM}.

The Freese-McKenzie commutator theory has proved useful in so many applications (see \cite{MS} for a survey) that we have little doubt it is the correct setting for loops, too. In this paper we derive the commutator theory for loops, with the congruence commutators at its core. The results are summarized in Section \ref{Sc:summary}.

Standard references to loop theory include \cite{Be,B,P}. See \cite{Berg,BS} for an introduction to universal algebra and \cite{MS} for an introduction to congruence commutators.

\section{Summary of results}\label{Sc:summary}

\subsection*{Inner mappings}

Let $Q$ be a loop with identity element $1$. For every $x\in Q$ let $L_x$, $R_x$, $M_x:Q\to Q$ be the bijections defined by
\begin{displaymath}
    L_x(y)= xy,\qquad R_x(y) = yx,\qquad M_x(y) = y\ldiv x.
\end{displaymath}
The mappings $L_x$, $R_x$ are traditionally called \emph{left} and \emph{right translations}. The mappings $y\mapsto x\ldiv y$ and $y\mapsto y/x$ are
the respective inverses of $L_x$ and $R_x$. The mapping $y\mapsto x\rdiv y$ is the inverse of $M_x$, because $z = y\ldiv x$ iff $yz = x$ iff $y = x\rdiv z$.

Following the conventional definitions of loop theory, the left and right translations generate the \emph{multiplication group} $\mlt Q$ of $Q$, i.e.,
\begin{displaymath}
    \mlt{Q}=\langle L_x,R_x;\;x\in Q\rangle.
\end{displaymath}
The \emph{inner mapping group} $\inn{Q}$ of $Q$ is the stabilizer of $1$ in $\mlt{Q}$.

To bring the mappings $M_x$ into play, we introduce the \emph{total multiplication group} $\totmlt Q$ of $Q$ as
\begin{displaymath}
    \totmlt Q=\langle L_x,R_x,M_x;\;x\in Q\rangle.
\end{displaymath}
The \emph{total inner mapping group} $\totinn{Q}$ of $Q$ is the stabilizer of $1$ in $\totmlt{Q}$. (Belousov \cite{Be2} was probably the first to ever consider total multiplication groups and total inner mapping groups.)

Although we will carefully distinguish between $\inn{Q}$ and $\totinn{Q}$, we will call elements of both $\inn{Q}$ and $\totinn{Q}$ \emph{inner mappings}. Note that, unlike in groups, $\inn{Q}$ is not necessarily a subgroup of the automorphism group $\aut{Q}$. (Loops where $\totinn Q\leq\aut Q$ were investigated in \cite{Be2}.)

In Section \ref{Ssc:TotMlt}, we calculate two small sets of generators for $\totinn{Q}$, namely
\begin{displaymath}
    \totinn{Q} = \langle L_{x,y},\, R_{x,y},\, M_{x,y},\, T_x,\, U_x;\;x,\,y\in Q\rangle
    = \langle A_{x,y}^\cdot,\,B_{x,y}^\cdot,\,A_{x,y}^\ldiv;\;x,\,y\in Q\rangle,
\end{displaymath}
where
\begin{displaymath}
    L_{x,y} = L_{xy}^{-1}L_xL_y,\quad
    R_{x,y} = R_{yx}^{-1}R_xR_y,\quad
    M_{x,y} = M_{y\ldiv x}^{-1}M_x M_y,\quad
    T_x =R_x^{-1}L_x,\quad
    U_x = R_x^{-1}M_x,
\end{displaymath}
and
\begin{displaymath}
	(z\cdot x)\circ y=A_{x,y}^\circ(z)\cdot(x\circ y),\qquad y\circ (z\cdot x)=B_{x,y}^\circ(z)\cdot(y\circ x).
\end{displaymath}
for $\circ\in\{\cdot,\ldiv,\rdiv\}$.
Each of these generating sets is an example of a \emph{set of words} that \emph{generates total inner mapping groups} in all loops, a concept that is formally defined in Section \ref{Ssc:words}. Informally, the above mappings, applied to an argument $z$, can be seen as loop terms in variables $x$, $y$, $z$ which yield a generating set of $\totinn{Q}$ for any loop $Q$ upon substituting all elements of $Q$ for $x$ and $y$.

\subsection*{The commutator}

Let $\A$ be a universal algebra. The congruences of $\A$ form a lattice with largest element $1_\A =\A\times \A$ and smallest element $0_\A=\{(a,a);\;a\in \A\}$.

Let $\alpha$, $\beta$, $\delta$ be congruences of $\A$. We say that \emph{$\alpha$ centralizes $\beta$ over $\delta$}, and write $C(\alpha,\beta;\delta)$, if for every $(n+1)$-ary term operation $t$, every pair $a\,\alpha\,b$ and every $u_1\,\beta\,v_1$, $\dots$, $u_n\,\beta\,v_n$ we have
\begin{displaymath}
    t(a,u_1,\dots,u_n)\,\delta\,t(a,v_1,\dots,v_n)\quad\text{implies}\quad t(b,u_1,\dots,u_n)\,\delta\,t(b,v_1,\dots,v_n).
\end{displaymath}
This implication is referred to as the \emph{term condition} for $t$, or $\TC(t,\alpha,\beta,\delta)$.\footnote{From now on, we will use the word ``term'' for both term operations and terms.}

The \emph{commutator} of $\alpha$, $\beta$, denoted by $[\alpha,\beta]$, is the smallest congruence $\delta$ such that $C(\alpha,\beta;\delta)$.
The Freese-McKenzie monograph \cite{FM} developed the theory and applications of this congruence operation in congruence modular varieties.

It is not easy to work with $C(\alpha,\beta;\delta)$ because the term condition must be tested for every term. It is therefore by no means straightforward to specialize the theory of \cite{FM} into a particular variety. The fundamental result of our paper is a description of the commutator $[\alpha,\beta]$ in loops, involving only a few special terms, namely the terms resulting from any set of words that generates total inner mapping groups. We will write Cg($X$) for the congruence generated by $X$, and we will denote by $\bar u$ the $n$-tuple $u_1,\dots,u_n$, where we intentionally omit the usual enclosing parentheses. We also write $\bar u\,\beta\,\bar v$ instead of $u_1\,\beta\,v_1$, $\dots$, $u_n\,\beta\,v_n$.

\begin{theorem}[Fundamental Theorem of Commutator Theory in Loops]\label{Th:CongrComInf}\label{Th:Fundamental}
Let $\V$ be a variety of loops and $\mathcal W$ a set of words that generates total inner mapping groups in $\V$. Then
\begin{displaymath}
    [\alpha,\beta] = \Cg{(W_{\bar u}(a), W_{\bar v}(a) );\;W\in\mathcal W,\,1\,\alpha\,a,\,\bar u\,\beta\,\bar v}
\end{displaymath}
for any congruences $\alpha$, $\beta$ of any $Q\in\V$.
\end{theorem}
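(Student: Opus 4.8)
Write $\gamma$ for the congruence on the right-hand side. I would prove the two inclusions $\gamma\le[\alpha,\beta]$ and $[\alpha,\beta]\le\gamma$ separately, the first being routine and the second carrying all the weight. For $\gamma\le[\alpha,\beta]$, it suffices to show that every generating pair of $\gamma$ already lies in $[\alpha,\beta]$. Fix $W\in\W$ and regard the word applied to its argument as a loop term $t(z,\bar y)=W_{\bar y}(z)$ in the distinguished variable $z$ and the parameters $\bar y$. Since $\W$ generates total inner mapping groups, $W_{\bar y}$ fixes $1$ for every value of $\bar y$, so $t(1,\bar y)=1$ is an identity of $\V$; in particular $t(1,\bar u)=1=t(1,\bar v)$, whence $t(1,\bar u)\,[\alpha,\beta]\,t(1,\bar v)$ trivially. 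Now $[\alpha,\beta]$ satisfies $C(\alpha,\beta;[\alpha,\beta])$ by definition, so $\TC(t,\alpha,\beta,[\alpha,\beta])$ applied to the $\alpha$-pair $(1,a)$ (using $1\,\alpha\,a$) and to $\bar u\,\beta\,\bar v$ upgrades the top row to the bottom row $t(a,\bar u)\,[\alpha,\beta]\,t(a,\bar v)$, i.e. $W_{\bar u}(a)\,[\alpha,\beta]\,W_{\bar v}(a)$. As $[\alpha,\beta]$ is a congruence containing all generators of $\gamma$, we get $\gamma\le[\alpha,\beta]$.

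The inclusion $[\alpha,\beta]\le\gamma$ amounts to verifying $C(\alpha,\beta;\gamma)$, and I would first reduce to a single convenient generating set. Writing $\gamma_\W$ for the right-hand side attached to a generating set $\W$, I claim the generator pairs of any word evaluating into $\totinn{Q}$ already lie in $\gamma_\W$; this makes $\gamma_\W$ independent of $\W$ and lets me work with the elementary set $\W_0=\{A^\cdot_{x,y},B^\cdot_{x,y},A^\ldiv_{x,y}\}$. The claim follows by showing the generator pairs are closed under composition and inverses of words: if $W_{\bar y}=f_{\bar y}\circ g_{\bar y}$, then $W_{\bar u}(a)=f_{\bar u}(g_{\bar u}(a))$ and $W_{\bar v}(a)=f_{\bar v}(g_{\bar v}(a))$ can be linked through the intermediate point $f_{\bar u}(g_{\bar v}(a))$: the pair $(g_{\bar u}(a),g_{\bar v}(a))$ is a generator, so applying the term $f_{\bar u}$ keeps us in $\gamma_\W$, while $(f_{\bar u}(a'),f_{\bar v}(a'))$ with $a'=g_{\bar v}(a)$ is again a generator because $g_{\bar v}$ fixes $1$, so that $1\,\alpha\,a'$ follows from $1\,\alpha\,a$. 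Transitivity of $\gamma_\W$ finishes the step, and the bookkeeping of the parameter substitutions is exactly what the formal definition of a set of words generating total inner mapping groups is designed to support.

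With $\gamma=\gamma_{\W_0}$, I would prove $C(\alpha,\beta;\gamma)$ through the unconditional transport identity
\[
\big(t(a,\bar u)\ldiv t(a,\bar v)\big)\ \gamma\ \big(t(b,\bar u)\ldiv t(b,\bar v)\big)\qquad(\star)
\]
valid for every term $t$, every $a\,\alpha\,b$, and every $\bar u\,\beta\,\bar v$. Granting $(\star)$, the term condition is immediate: passing to the normal subloop $N=1/\gamma$ and the coset description $p\,\gamma\,q\iff p\ldiv q\in N$, the two column differences in $(\star)$ lie in one $\gamma$-class, so one is in $N$ iff the other is, which is precisely $t(a,\bar u)\,\gamma\,t(a,\bar v)\Rightarrow t(b,\bar u)\,\gamma\,t(b,\bar v)$. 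I would establish $(\star)$ by induction on the structure of $t$. The base cases (a variable or a constant) give column differences equal to $1$ or to a fixed $u_i\ldiv v_i$ independent of the first coordinate, so $(\star)$ is trivial. For the inductive step $t=t_1\circ t_2$ with $\circ\in\{\cdot,\ldiv,\rdiv\}$, I would use the defining identities of $A^\circ_{x,y}$ and $B^\circ_{x,y}$ to factor out, as inner mappings, the dependence of the product or division on each of its two arguments; the inductive hypotheses move the column differences of $t_1$ and $t_2$ by $\gamma$ as the first coordinate passes from $a$ to $b$, and the twisting identities convert these into the motion of the column difference of $t$, the residual corrections being evaluations of $A^\circ,B^\circ$ on elements $\alpha$-close to $1$ with $\beta$-related parameters, hence generators of $\gamma$. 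Chaining the inclusions yields $[\alpha,\beta]=\gamma$.

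The main obstacle is the inductive step for $(\star)$, the loop analogue of group commutator collection. The term condition does not compose, so one cannot induct on it directly; the whole point of passing to the stronger, unconditional identity $(\star)$ is to obtain a statement that does propagate through the basic operations. Executing that propagation is delicate because loops are nonassociative and because all three operations must be handled, so the clean group manipulation (writing $b=ak$ with $k$ central-like) is replaced by repeated use of the inner-mapping twisting identities, each contributing a correction that must be certified to lie in $\gamma$ through its generators. Verifying that every such correction is indeed a generator pair, with parameters correctly $\beta$-related and base points correctly $\alpha$-close to $1$, is where the generating property of $\W_0$ and the congruence bookkeeping do the real work.
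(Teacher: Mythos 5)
Your first inclusion and your reduction to a fixed generating set are both sound (the latter is, in substance, the paper's lemma on uniform generation, and your composition/inverse chaining is the right mechanism for it). The gap is at the crux, in the transport identity $(\star)$: you have the roles of $\alpha$ and $\beta$ crossed relative to the generators of $\gamma$. The elements you transport, $t(a,\bar u)\ldiv t(a,\bar v)$, are $\beta$-close to $1$ (since $\bar u\,\beta\,\bar v$), while the data that changes between the two sides of $(\star)$ is the $\alpha$-data; hence the corrections your induction produces are pairs of the form $W_{\bar p}(c)$ versus $W_{\bar q}(c)$ with $1\,\beta\,c$ and $\bar p\,\alpha\,\bar q$ --- generators of the \emph{transposed} congruence $\Cg{(W_{\bar p}(c),W_{\bar q}(c));\;1\,\beta\,c,\,\bar p\,\alpha\,\bar q}$, not of your $\gamma$. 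Your claim that the residual corrections are ``evaluations on elements $\alpha$-close to $1$ with $\beta$-related parameters'' is false as stated. A minimal witness already occurs in a group with the term $t(z,y)=y\cdot z$: there $t(a,u)\ldiv t(a,v)=(ua)\ldiv(va)=T_a^{-1}(u\ldiv v)$ and $t(b,u)\ldiv t(b,v)=T_b^{-1}(u\ldiv v)$, so $(\star)$ for this single term \emph{is} the assertion $T_a^{-1}(c)\,\gamma\,T_b^{-1}(c)$ for $1\,\beta\,c$, $a\,\alpha\,b$ --- a transposed generator, and no cleverer decomposition can avoid it. In effect, $(\star)$ as you wrote it subsumes the symmetry $[\beta,\alpha]\le[\alpha,\beta]$, a genuinely nontrivial theorem of modular commutator theory which the paper deliberately avoids using, and which a structural induction with corrections confined to $\gamma$ cannot deliver.

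The repair is cheap, which is why the architecture is still salvageable and genuinely different from the paper's. Either (a) transpose the statement: transport \emph{row} differences along $\beta$, i.e.\ prove $t(a,\bar u)\ldiv t(b,\bar u)\ \gamma\ t(a,\bar v)\ldiv t(b,\bar v)$; these differences are $\alpha$-close to $1$ and the parameters now move by $\beta$, so the corrections extracted by the twisting identities really are generators of $\gamma$ (one must also check, e.g., that $t_2(b,\bar u)/t_2(a,\bar u)\,\beta\,t_2(b,\bar v)/t_2(a,\bar v)$ so that composite parameters stay $\beta$-related), and the term condition still follows because $t(b,\bar u)=t(a,\bar u)\cdot\bigl(t(a,\bar u)\ldiv t(b,\bar u)\bigr)$ and likewise for $\bar v$. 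Or (b) keep $(\star)$, concede that the induction proves $[\alpha,\beta]\le\Gamma(\beta,\alpha)$ where $\Gamma(\beta,\alpha)$ denotes the transposed generating congruence, and close with a sandwich, using your (correct) first inclusion in both orders: $\Gamma(\alpha,\beta)\le[\alpha,\beta]\le\Gamma(\beta,\alpha)\le[\beta,\alpha]\le\Gamma(\alpha,\beta)$, forcing all four to coincide. With either repair you obtain a proof by structural induction on terms with an unconditional invariant, which is a different route from the paper: the paper first reduces to terms that are slim in the distinguished variable and then runs a single ``climbing'' induction along the path of that variable, carrying a conditional hypothesis and cancelling at the end. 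Your route trades that global reduction for a per-operation inductive step, where nonassociativity forces exactly the same twisting-identity bookkeeping the paper performs.
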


A particular generating set for $[\alpha,\beta]$ is obtained anytime a suitable generating set $\mathcal W$ is given, for instance the above-mentioned set $\mathcal W=\{L_{x,y}$, $R_{x,y}$, $T_x$, $M_{x,y}$, $U_x\}$ for the variety $\V$ of all loops. See Example \ref{Ex:GeneratingSets} for other options.

Notice that the definition of the commutator is asymmetric, and so is the generating set from Theorem \ref{Th:Fundamental}. Nevertheless, $[\alpha,\beta]=[\beta,\alpha]$ for any congruences $\alpha,\beta$ in any algebra in a congruence modular variety \cite{FM}. This is an important property, although we do not need it in the present paper.

The proof of Theorem \ref{Th:CongrComInf} is presented in Section \ref{Sc:Proof}, and it is based on the words $A_{x,y}^\circ$, $B_{x,y}^\circ$. Note that the words $A_{x,y}^\circ$ resemble associators and the words $B_{x,y}^\circ$ look like a combination of commutators and associators, except that the element $z$ is absorbed into both $A_{x,y}^\circ(z)$ and  $B_{x,y}^\circ(z)$.

The machinery of Theorem \ref{Th:Fundamental} can be used to obtain numerous descriptions of the commutator $[\alpha,\beta]$. Theorem \ref{Th:FundamentalFinite} strengthens Theorem \ref{Th:CongrComInf} in loops satisfying a finiteness condition. More efficient generating sets $\mathcal W$ for Theorem \ref{Th:CongrComInf} are investigated in Section \ref{Ss:Pruning}, with the results summarized in Corollary \ref{Cr:CongrCom}. Generating sets in terms of elementwise associators and commutators, a traditional approach of group theory and loop theory, are given in Corollary \ref{Cr:CongrComAC}. The normal subloop corresponding to the congruence commutator is studied in Section~\ref{Sc:SubloopComm}.

Further simplifications are possible in specific classes of loops---throughout the paper we focus on inverse property loops, commutative loops and groups. In Section \ref{Sc:Examples} we illustrate how Theorem \ref{Th:CongrComInf} and its corollaries can be used to calculate the commutator in concrete loops. We also provide examples that witness that our results are optimal in certain ways.

\subsection*{Elementwise associators and commutators}

Leong \cite{L} noticed that, indeed, the associator $[x,y,z]$ corrects for the lack of associativity in the equation $(xy)z = x(yz)$, but so do many other associators, for instance the associator $a^L(x,y,z)$ defined by
\begin{displaymath}
    (xy)z = (a^L(x,y,z)x)(yz),
\end{displaymath}
or the associator $b^L(x,y,z)$ defined by
\begin{displaymath}
    x(yz) = ((b^L(x,y,z)x)y)z.
\end{displaymath}
The advantage of these new associators is that they relate to inner mappings, namely, $a^L(x,y,z) = R_{z,y}(x)/x$, and $b^L(x,y,z) = R_{z,y}^{-1}(x)/x$. Leong proved that in every loop $Q$ the subloop $\langle a^L(x,y,z),b^L(x,y,z);\;x,y,z\in Q\rangle$ is normal, and hence equal to $A(Q)$. He also showed that if $Q$ is a Moufang loop then $A(Q) = \langle [x,y,z];\;x,y,z\in Q\rangle$. Covalschi and Sandu \cite{CS} recently introduced similar associators that can be used to generate $Q'$.

The difficulty lies in deciding which associators should be used. Our approach is systematic and is based on the idea that elementwise associators and commutators should follow naturally from the commutator theory for congruences. Upon separating the roles of commutators and associators, we present a systematic definition of all possible associators and commutators in any loop, the result being summarized in Table \ref{Tb:AC}. Importantly, all these associators and commutators are associated with certain inner mappings (they evaluate to $1$ when $x=1$ is substituted), and thus can be used to obtain a generating set of the congruence commutator; see Corollary \ref{Cr:CongrComAC}.

In Section \ref{Sc:ASDS}, we make a case for our commutators and associators. First, we show that $Q'$ is the subloop generated by a choice of associators and commutators whenever the corresponding inner mappings generate $\inn Q$; see Theorem \ref{Th:Q'}. Then, imitating the proof of Leong, we show that certain associators generate $A(Q)$; see Theorem \ref{Th:A(Q)}.

\subsection*{The commutator of normal subloops}

It is well known that a subloop of a loop $Q$ is normal iff it is a kernel of some homomorphism from $Q$ to another loop. Equivalently, normal subloops are precisely the blocks of congruences on $Q$ containing the identity element $1$, or subloops closed under all inner mappings from $\inn Q$. In Proposition \ref{Pr:TotInn}, we show that normal subloops are closed under all inner mappings from $\totinn Q$, too.

There exists an order-preserving correspondence between the lattice of normal subloops of $Q$ and the lattice of congruences of $Q$. If $N$ is a normal subloop of $Q$, let $\gamma_N$ be the congruence on $Q$ defined by
\begin{displaymath}
    a\,\gamma_N\,b\text{ iff }a/b\in N,
\end{displaymath}
or, equivalently, iff $b\ldiv a\in N$, $b/a\in N$, or $a\ldiv b\in N$. If $\alpha$ is a congruence of $Q$, let $N_\alpha$ be the normal subloop of $Q$ defined by
\begin{displaymath}
    N_\alpha = \{a\in Q;\;a\,\alpha\,1\}.
\end{displaymath}
For two normal subloops $A$, $B$ of $Q$, define the \emph{commutator} of $A$ and $B$ in $Q$ by
\begin{displaymath}
    [A,B]_Q=N_{[\gamma_A,\gamma_B]}.
\end{displaymath}
The above correspondence allows us to immediately translate Theorem \ref{Th:CongrComInf} from the language of congruences to the language of normal subloops. We will write Ng($X$) for the smallest normal subloop containing the set $X$, and $\bar u/\bar v\in B$ as a shorthand for $u_1/v_1\in B$, $\dots$, $u_n/v_n\in B$.

\begin{theorem}\label{Th:NormComInf}\label{Th:FundamentalS}
Let $\V$ be a variety of loops and $\mathcal W$ a set of words that generates total inner mapping groups in $\V$. Then
\begin{displaymath}
    [A,B]_Q = \Ng{W_{\bar u}(a)/W_{\bar v}(a);\;W\in\mathcal W,\,a\in A,\,\bar u/\bar v\in B }
\end{displaymath}
for any normal subloops $A$, $B$ of any $Q\in\V$.
\end{theorem}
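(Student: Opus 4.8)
The plan is to deduce this statement directly from Theorem~\ref{Th:CongrComInf} by transporting it across the correspondence between congruences and normal subloops. The first thing I would do is record (or recall, since it is classical and implicit in the excerpt) that the maps $N\mapsto\gamma_N$ and $\alpha\mapsto N_\alpha$ are mutually inverse lattice isomorphisms between the normal subloops and the congruences of $Q$. The only feature of this isomorphism I actually need is that $\alpha\mapsto N_\alpha$ preserves arbitrary joins. With this in hand, the theorem should follow by applying Theorem~\ref{Th:CongrComInf} to the pair $\alpha=\gamma_A$, $\beta=\gamma_B$ and then applying the operator $N_{(-)}$ to both sides of the resulting formula for $[\gamma_A,\gamma_B]$.

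Next I would translate the indexing conditions that appear in Theorem~\ref{Th:CongrComInf} into the language of $A$ and $B$. Since $a\,\gamma_A\,b$ means $a/b\in A$, the condition $1\,\gamma_A\,a$ reads $1/a\in A$, which is equivalent to $a\in A$ because normal subloops are closed under inverses; similarly $\bar u\,\gamma_B\,\bar v$ is by definition exactly $\bar u/\bar v\in B$. Thus the generating pairs $(W_{\bar u}(a),W_{\bar v}(a))$ that Theorem~\ref{Th:CongrComInf} produces for $[\gamma_A,\gamma_B]$ are indexed by precisely $W\in\mathcal W$, $a\in A$, $\bar u/\bar v\in B$, matching the index set in the statement to be proved.

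The one substantive step is the following transport lemma: for any set $X$ of pairs of elements of $Q$,
\begin{displaymath}
    N_{\Cg{X}}=\Ng{p/q;\;(p,q)\in X}.
\end{displaymath}
For a single pair this reduces to $N_{\Cg{(p,q)}}=\Ng{p/q}$, which I would verify by unwinding definitions: $N_{\Cg{(p,q)}}$ is the smallest normal subloop $N$ with $p\,\gamma_N\,q$, i.e.\ with $p/q\in N$, and that is by definition $\Ng{p/q}$. The general case then follows by expressing $\Cg{X}$ as the join of the principal congruences $\Cg{(p,q)}$, applying join-preservation of $\alpha\mapsto N_\alpha$, and using that the join of the subloops $\Ng{p/q}$ is $\Ng{p/q;\;(p,q)\in X}$. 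Combining these pieces with the index translation yields
\begin{displaymath}
    [A,B]_Q=N_{[\gamma_A,\gamma_B]}=\Ng{W_{\bar u}(a)/W_{\bar v}(a);\;W\in\mathcal W,\,a\in A,\,\bar u/\bar v\in B},
\end{displaymath}
as required. The main obstacle is the transport lemma, and within it the point that genuinely needs the isomorphism (rather than mere order-preservation) is the commutation of $N_{(-)}$ with joins; once that is secured, the principal case and the index translation are routine and everything else is bookkeeping.
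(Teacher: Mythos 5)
Your proposal is correct and takes essentially the same route as the paper: it likewise deduces Theorem \ref{Th:FundamentalS} from Theorem \ref{Th:CongrComInf} by transporting the generating set across the congruence--normal subloop correspondence, and your transport lemma is exactly the paper's Lemma \ref{Lm:generators}(i). The only cosmetic difference is internal to that lemma: the paper verifies it by a direct two-inclusion argument (using minimality of $\Cg{X}$ and of the normal closure), whereas you reduce to principal congruences and invoke join-preservation of $\alpha\mapsto N_\alpha$.
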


Using the ideas of Section \ref{Sc:AC}, we can choose $\W$ so that we can interpret the generating set of $[A,B]_Q$ as quotients of associators and commutators.

In Section \ref{Sc:SubloopComm}, we explore simplifications of the generating set. It so happens that in groups the normal closure is not needed and the quotients can be reduced, i.e.,
\begin{displaymath}
    [A,B]_Q = \langle [a,u]/[a,v];\;a\in A,\,u/v\in B\rangle = \langle [a,b];\;a\in A,\,b\in B\rangle.
\end{displaymath}
Neither of these properties holds in general loops, as illustrated by examples in Section \ref{Sc:Examples}. The normal closure can be avoided in all \emph{automorphic loops}, that is, loops $Q$ with $\inn{Q}\le\aut{Q}$; see Proposition \ref{Pr:automorphic}. Some types of quotients can be reduced in all loops; see Corollary \ref{Cr:FundamentalS}.

\subsection*{Center and nilpotency, abelianess and solvability}

The general commutator theory for universal algebras offers more than just the commutator of two congruences. An algebra $\A$ is called \emph{nilpotent}, if $\gamma_{(n)}=0_\A$ for some $n$, where
\begin{displaymath}
    \gamma_{(0)}=1_\A,\qquad \gamma_{(i+1)}=[\gamma_{(i)},1_\A].
\end{displaymath}
An algebra $\A$ is called \emph{solvable}, if $\gamma^{(n)}=0_\A$ for some $n$, where
\begin{displaymath}
    \gamma^{(0)}=1_\A,\qquad \gamma^{(i+1)}=[\gamma^{(i)},\gamma^{(i)}].
\end{displaymath}
Notice that both definitions use a special type of commutators: nilpotency requires only commutators $[\alpha,1_\A]$, while solvability requires only commutators $[\alpha,\alpha]$. Both of these types of commutators can be defined using specialized concepts: center and abelianess.

Let $\A$ be an algebra. The \emph{center} of $\A$, denoted by $\zeta(\A)$, is the largest congruence of $\A$ such that $C(\zeta(\A),1_\A;0_\A)$. It is easy to show that $[\alpha,1_\A]$ is the smallest congruence $\delta$ such that $\alpha/\delta\leq\zeta(\A/\delta)$.

A congruence $\alpha$ of an algebra $\A$ is called \emph{abelian} if $C(\alpha,\alpha;0_\A)$. It is easy to show that $[\alpha,\alpha]$ is the smallest congruence $\delta$ such that $\alpha/\delta$ is an abelian congruence of $\A/\delta$. An algebra $\A$ is called \emph{abelian} if $\zeta(\A)=1_\A$, or, equivalently, if the congruence $1_\A$ is abelian.

An argument similar to the one in group theory shows that $\A$ is nilpotent (resp. solvable) if and only if there is a chain of congruences \begin{displaymath}
    1_\A=\alpha_0\geq\alpha_1\geq\ldots\geq\alpha_n=0_\A
\end{displaymath}
such that $\alpha_{i}/\alpha_{i+1}\leq\zeta(\A/\alpha_{i+1})$ (resp. such that $\alpha_{i}/\alpha_{i+1}$ is an abelian congruence of $\A/\alpha_{i+1}$) for all $i\in\{0,1,\dots,n-1\}$.

Now, let $Q$ be a loop. One can quickly show (and it follows from Theorem \ref{Th:Center}) that a loop is abelian if and only if it is a commutative group. With respect to nilpotency and solvability, there are good news and bad news.

Fortunately, the center $\zeta(Q)$ as defined in universal algebra, and the center $Z(Q)$ as defined in loop theory agree, i.e., $N_{\zeta(Q)}=Z(Q)$; see Theorem \ref{Th:Center}. Consequently, nilpotency based on the commutator theory is the same concept as central nilpotency traditionally used in loop theory. In our opinion, this explains why central nilpotency has been playing a prominent role in loop theory.

Unfortunately, Bruck's concept of solvability derived from group theory does not agree with the universal algebraic solvability. The commutator theory suggests there is a difference between \emph{abelianess} of an algebra and \emph{abelianess in an algebra}. This is inherent in the congruence approach, since congruences carry over the universe of the original algebra, so the congruence commutator $[\alpha,\alpha]$ automatically takes place in the underlying algebra $\A$. In loops, we have to be careful. Upon translating the concept of abelianess from congruences to normal subloops, we note that a normal subloop $N$ of $Q$ is \emph{abelian} if $[1_N,1_N]=0_N$. However, $N$ is \emph{abelian in} $Q$ if $[\gamma_N,\gamma_N]=0_Q$, a stronger property in general. Examples of an abelian loop $N\unlhd Q$ that is not abelian in $Q$ were known already to Freese and McKenzie; see \cite[Chapter 5, Exercise 10]{FM} or our Example \ref{Ex:Z4}.

In our opinion, this explains why there are relatively few results on solvable loops, and why most existing results deal with varieties of loops that are close to groups. For instance, the Feit-Thompson Odd Order Theorem \cite{FT} has been extended from groups to Moufang loops in \cite{G2}, and to automorphic loops in \cite{KKPV}, Hall's theorem for Moufang loops can be found in \cite{G2} and in \cite{Ga}. A notable exception is the general result of Vesanen \cite{V}: if the group $\mlt{Q}$ is solvable then $Q$ itself is solvable in the group-theoretical sense. But hardly anything is known in the other direction, starting with the assumption that $Q$ is solvable. Could this be so because the traditional definition of solvability in loops is too weak?

We propose to call a loop $Q$ \emph{congruence solvable} if there is a chain $1=Q_0 \le Q_1 \le \cdots \le Q_n = Q$ of normal subloops $Q_i$ of $Q$ such that every factor $Q_{i+1}/Q_i$ is abelian in $Q/Q_i$. Does congruence solvability of $Q$ relate to the structure of the total multiplication group $\totmlt{Q}$? Is group-theoretical solvability equivalent to congruence solvability in classes of loops close to groups? These questions and related problems are subject of an ongoing investigation of the authors. Here we at least show that while every congruence solvable loop is indeed solvable, the converse is not true; see Example \ref{Ex:Z4}.

Section \ref{Sc:univalg} explains in more detail how the center, central nilpotency, abelianess and solvability specialize from universal algebras to loops, and from loops to groups. We also provide references to other alternative approaches to nilpotency and solvability in loops and other classes of algebras.

\subsection*{Auxiliary definitions}

Let $Q$ be a loop with \emph{two-sided inverses}, that is, a loop in which for every $x\in Q$ there is $x^{-1}\in Q$ such that $xx^{-1}= x^{-1}x=1$. We then define the inversion mapping
\begin{displaymath}
    J:Q\to Q, \qquad J(x)=x^{-1}.
\end{displaymath}
Note that $(x^{-1})^{-1}=x$, and hence $J$ is involutory.

We say that $Q$ has the \emph{anti-automorphic inverse property} (AAIP) if $(xy)^{-1}=y^{-1}x^{-1}$ for every $x,y\in Q$, or, equivalently, if the mapping $J$ is an anti-automorphism.
We say that $Q$ has the \emph{inverse property} if $x^{-1}(xy)=y = (yx)x^{-1}$ holds for every $x$, $y\in Q$. Then $x\ldiv y$ can be replaced by $x^{-1}y$ and $x/y$ by $xy^{-1}$, as in groups. Note that inverse property loops have the AAIP: $(xy)\cdot (xy)^{-1}x = x = (xy)y^{-1}$, so $(xy)^{-1}x = y^{-1}$, and using the inverse property again, $(xy)^{-1} = y^{-1}x^{-1}$. Many highly structured varieties of loops have the inverse property, most notably groups and Moufang loops.

\section{Inner mappings}\label{Sc:TotMlt}

\subsection{Inner mapping groups}

Let $Q$ be a loop. Recall that $\mlt Q=\langle L_x,R_x;\;x\in Q\rangle$ and $\inn Q=\mlt Q_1 = \{f\in\mlt Q;\; f(1)=1\}$. One possible generating set of $\inn{Q}$ is described in the well known Proposition \ref{Pr:InnGens}, which is in turn based on a variation of a result of O.~Schreier about generators of stabilizers.

\begin{lemma}[O.~Schreier]\label{Lm:StabilizerGens}
Let $G$ be a transitive permutation group on a set $X$ and let $c\in X$. For $y\in X$ let $g_y\in G$ be such that $g_y(c)=y$, where we choose $g_c=1$. If $G=\langle H\rangle$ then $G_c = \langle g_{h(y)}^{-1}hg_y;\;h\in H,\,y\in X\rangle.$
\end{lemma}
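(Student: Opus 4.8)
The plan is to prove the two inclusions separately. Write $K=\langle g_{h(y)}^{-1}hg_y;\;h\in H,\,y\in X\rangle$ for the candidate subgroup. The inclusion $K\le G_c$ is immediate: since $g_y(c)=y$, we have $(g_{h(y)}^{-1}hg_y)(c)=g_{h(y)}^{-1}(h(y))=c$ because $g_{h(y)}(c)=h(y)$, so each proposed generator fixes $c$, and hence so does every product of them.

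For the reverse inclusion $G_c\le K$, the key observation is that the chosen representatives $g_y$ form a left transversal of $G_c$ in $G$: by transitivity, the map $gG_c\mapsto g(c)$ is a bijection $G/G_c\to X$, so $g(c)=g'(c)$ if and only if $gG_c=g'G_c$. The engine of the proof is the single identity $hg_y=g_{h(y)}\cdot(g_{h(y)}^{-1}hg_y)$, which rewrites $hg_y$ as the transversal element $g_{h(y)}$ times a generator of $K$. I would then prove, by induction on word length, the claim that for every $g\in G$ and every $y\in X$ one has $gg_y\in g_{g(y)}K$. The base case $g=1$ is trivial since $g_y\in g_yK$. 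In the inductive step, writing $g=h^{\pm1}w$ with $h\in H$, one applies the inductive hypothesis to $w$ and then uses the displayed identity to push the leading generator past the transversal element $g_{w(y)}$ at the cost of multiplying by an element of $K$.

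Finally, specializing the claim to $y=c$ (where $g_c=1$) gives $g=gg_c\in g_{g(c)}K$ for every $g\in G$; if moreover $g\in G_c$, then $g(c)=c$, whence $g\in g_cK=K$. This yields $G_c\le K$ and, combined with the easy inclusion, completes the argument. I expect the main obstacle to be organizational rather than conceptual: the induction must handle a leading inverse $h^{-1}$, which requires reading the rewriting identity backwards, in the form $h^{-1}g_z\in g_{h^{-1}(z)}K$ (obtained by setting $z=h(z')$ in $g_{h(z')}^{-1}hg_{z'}\in K$ and solving for $h^{-1}g_z$). Once the transversal correspondence and this two-sided use of the rewriting identity are in place, the remainder is the classical Schreier computation and the verification is routine.
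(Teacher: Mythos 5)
Your proof is correct and takes essentially the same route as the paper: your induction on word length, driven by the identity $hg_y=g_{h(y)}\cdot(g_{h(y)}^{-1}hg_y)$, is exactly the paper's telescoping expansion of $g=h_1\cdots h_n$ with transversal elements inserted between consecutive factors, and your backwards reading of the identity to handle a leading $h^{-1}$ is precisely the paper's observation that $(g_{h(y)}^{-1}hg_y)^{-1}=g_{h^{-1}h(y)}^{-1}h^{-1}g_{h(y)}$. No gaps; the argument is complete.
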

\begin{proof}
Let $g\in G_c$. Since $G=\langle H\rangle$, there are $h_1$, $\dots$, $h_n\in H\cup H^{-1}$ such that $g = h_1\cdots h_n$. We thus have
\begin{displaymath}
    g = g_{h_1\cdots h_n(c)}(g_{h_1\cdots h_n(c)}^{-1}h_1g_{h_2\cdots h_n(c)})\cdots
    (g_{h_{n-1}h_n(c)}^{-1}h_{n-1}g_{h_n(c)})(g_{h_n(c)}^{-1}h_ng_c)g_c^{-1}.
\end{displaymath}
Note that $g_{h_1\cdots h_n(c)} = g_{g(c)} = g_c = 1$, so $g$ is a product of elements of the form $g_{h(y)}^{-1}hg_y$, where $h\in H\cup H^{-1}$. The identity
\begin{displaymath}
    (g_{h(y)}^{-1}hg_y)^{-1} = g_y^{-1}h^{-1}g_{h(y)} = g^{-1}_{h^{-1}h(y)}h^{-1}g_{h(y)}
\end{displaymath}
shows that generators of the form $g^{-1}_{h(y)}hg_y$ with $h\in H$ suffice.
\end{proof}

Recall the mappings $A_{x,y}^\cdot=R_{xy}^{-1}R_yR_x$ and $B_{x,y}^\cdot=R_{yx}^{-1}L_yR_x$ introduced in Section \ref{Sc:summary}.

\begin{proposition}\label{Pr:InnGens}
Let $Q$ be a loop. Then
\begin{displaymath}
    \inn{Q} = \langle L_{x,y},\,R_{x,y},\,T_x;\;x,y\in Q\rangle = \langle A_{x,y}^\cdot,B_{x,y}^\cdot;\;x,y\in Q\rangle.
\end{displaymath}
\end{proposition}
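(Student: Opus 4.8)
The plan is to apply Lemma~\ref{Lm:StabilizerGens} (Schreier) to the transitive permutation group $G=\mlt Q$ acting on $X=Q$, with distinguished point $c=1$. First I would fix the transversal $g_y=L_y$, which is legitimate because $L_y(1)=y\cdot 1=y$ and $L_1=\mathrm{id}$, matching the requirement $g_c=1$. Taking the generating set $H=\{L_x,R_x;\,x\in Q\}$, Schreier's lemma immediately yields that $\inn Q$ is generated by the elements $g_{h(y)}^{-1}\,h\,g_y$ as $h$ ranges over $H$ and $y$ over $Q$. Splitting into the two cases $h=L_x$ and $h=R_x$ gives two families of generators, which I will now identify with the named inner mappings.

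For $h=L_x$ we have $h(y)=L_x(y)=xy$, so $g_{h(y)}=L_{xy}$ and the generator is $L_{xy}^{-1}L_xL_y=L_{x,y}$ by definition. For $h=R_x$ we have $h(y)=R_x(y)=yx$, so $g_{h(y)}=L_{yx}$ and the generator is $L_{yx}^{-1}R_xL_y$. This last expression is not literally one of $R_{x,y}$ or $T_x$, so the first genuine step is to massage it into the claimed generators. I would record the obvious special values $L_{x,1}=R_{x,1}=\mathrm{id}$ and observe that setting $y=1$ in the $R_x$-family gives $L_x^{-1}R_x$, i.e.\ (the inverse of) $T_x=R_x^{-1}L_x$, so each $T_x$ lies in the group. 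The remaining task is to show that the general element $L_{yx}^{-1}R_xL_y$ can be rewritten, modulo the $L_{x,y}$ and $T_x$ already in hand, in terms of $R_{x,y}=R_{yx}^{-1}R_xR_y$; a direct computation comparing $L_{yx}^{-1}R_xL_y$ with $R_{yx}^{-1}R_xR_y$ and inserting the translation identities should express one in terms of the other together with translations whose Schreier quotients are already known to be inner. This juggling of translations is the routine-but-fiddly core, and I would present it as a short chain of equalities rather than belabor it.

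With the first equality $\inn Q=\langle L_{x,y},R_{x,y},T_x\rangle$ established, the second equality $\inn Q=\langle A_{x,y}^\cdot,B_{x,y}^\cdot\rangle$ I would prove by exhibiting mutual containment. For one direction I would rerun Schreier's lemma but now with the \emph{right} transversal $g_y=R_y$ (again $R_y(1)=y$ and $R_1=\mathrm{id}$), so the generators become $R_{h(y)}^{-1}hR_y$; choosing $h=R_x$ gives $R_{yx}^{-1}R_xR_y=A_{x,y}^\cdot$ and choosing $h=L_x$ gives $R_{xy}^{-1}L_xR_y$, which up to renaming the parameters is $B_{x,y}^\cdot=R_{yx}^{-1}L_yR_x$. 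Thus $\langle A_{x,y}^\cdot,B_{x,y}^\cdot\rangle$ is exactly the Schreier generating set for $\inn Q$ relative to the right transversal, giving $\inn Q=\langle A_{x,y}^\cdot,B_{x,y}^\cdot\rangle$ directly. The alternative would be to exhibit each $A_{x,y}^\cdot,B_{x,y}^\cdot$ as a word in $L_{x,y},R_{x,y},T_x$ and conversely, but invoking Schreier twice with the two natural transversals is cleaner and avoids that bookkeeping.

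The main obstacle I anticipate is purely computational: reconciling the two shapes of generator that Schreier produces (a left-transversal quotient carrying an $R_x$, or a right-transversal quotient carrying an $L_x$) with the specific named mappings $R_{x,y}$, $T_x$, $B_{x,y}^\cdot$. These require inserting the defining loop identities $L_x(y)=xy$, $R_x(y)=yx$ and rebracketing products of translations, and the only real care needed is to verify that every ``extra'' translation that appears can itself be absorbed into the stabilizer via a generator already listed (this is why $T_x$ must be included in the first set). I expect no conceptual difficulty beyond this rewriting, since the surjectivity of the listed generators is guaranteed by Schreier's lemma the moment the transversal is fixed.
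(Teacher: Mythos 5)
Your proposal is correct and follows essentially the same route as the paper: Schreier's lemma (Lemma \ref{Lm:StabilizerGens}) applied to $\mlt{Q}$ with a natural transversal, plus insertion of translations to rewrite the mixed Schreier generators; the paper simply runs Schreier once with $g_y=R_y$, so that the second equality is immediate and the first follows from $A_{y,x}^\cdot=R_{x,y}$ and $B_{y,x}^\cdot=(R_{xy}^{-1}L_{xy})(L_{xy}^{-1}L_xL_y)(L_y^{-1}R_y)=T_{xy}L_{x,y}T_y^{-1}$. The one computation you deferred in your left-transversal run is the mirror image of this and does go through: $L_{yx}^{-1}R_xL_y=(L_{yx}^{-1}R_{yx})(R_{yx}^{-1}R_xR_y)(R_y^{-1}L_y)=T_{yx}^{-1}R_{x,y}T_y$, so your argument is complete once this line is inserted.
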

\begin{proof}
The multiplication group $G=\mlt{Q}$ acts transitively on $X=Q$. Upon applying Lemma \ref{Lm:StabilizerGens} with $c=1$, $g_y = R_y$ and $H = \{L_x,\,R_x;\;x\in Q\}$, we conclude that $G_1 = \inn{Q}$ is generated by $\{R_{L_x(y)}^{-1}L_xR_y,\,R_{R_x(y)}^{-1}R_xR_y;\;x,\,y\in Q\} = \{B_{y,x}^\cdot,\,A_{y,x}^\cdot;\;x,y\in Q\}$. Now note that $A_{y,x}^\cdot=R_{x,y}$ and $B_{y,x}^\cdot = R_{L_x(y)}^{-1}L_xR_y  = (R_{xy}^{-1}L_{xy})(L_{xy}^{-1}L_xL_y)(L_y^{-1}R_y) = T_{xy}L_{x,y}T_y^{-1}$.
\end{proof}



An immediate corollary of Proposition \ref{Pr:InnGens} is the observation that $\inn{Q}=1$ if and only if $Q$ is an abelian group. We will need this fact in Section \ref{Sc:ASDS}.

The significance of $\inn{Q}$ is that it can be used to characterize normal subloops, just as in the case of groups.

\begin{proposition}\label{Pr:Inn}
Let $N$ be a subloop of $Q$. Then the following conditions are equivalent:
\begin{enumerate}
\item[(i)] $N$ is normal in $Q$, that is, $xN=Nx$, $x(yN) = (xy)N$, $N(xy) = (Nx)y$ holds for every $x$, $y\in Q$.
\item[(ii)] $f(N)=N$ for every $f\in\inn{Q}$.
\item[(iii)] $N$ is the kernel of some loop homomorphism.
\item[(iv)] $N$ is the block containing $1$ of some congruence on $Q$.
\end{enumerate}
\end{proposition}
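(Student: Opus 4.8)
The plan is to prove the four conditions equivalent by establishing the cycle of implications $(iii)\Rightarrow(iv)\Rightarrow(ii)\Rightarrow(i)\Rightarrow(iii)$, since several of these are either standard in universal algebra or reduce to routine verifications. The implications $(iii)\Leftrightarrow(iv)$ are immediate from the first isomorphism theorem: the kernel of a homomorphism $\varphi\colon Q\to Q'$ is exactly the block containing $1$ of the congruence $a\sim b$ iff $\varphi(a)=\varphi(b)$, and conversely any congruence $\theta$ gives the quotient homomorphism $Q\to Q/\theta$ whose kernel is the block of $1$. So the real content lies in connecting these with the internal conditions $(i)$ and $(ii)$.

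For $(iv)\Rightarrow(ii)$, suppose $N$ is the block containing $1$ of a congruence $\theta$. Every inner mapping $f\in\inn Q$ is built from translations, and translations respect $\theta$: if $a\,\theta\,b$ then $xa\,\theta\,xb$ and $ax\,\theta\,bx$, so each $L_x,R_x$ (and their inverses) descends to the quotient. Since $f$ fixes $1$ and is a composite of such translations, $a\in N$ (i.e.\ $a\,\theta\,1$) forces $f(a)\,\theta\,f(1)=f(1)$; one then checks $f(1)\,\theta\,1$ so that $f(a)\in N$, giving $f(N)\subseteq N$, and applying the same to $f^{-1}$ yields $f(N)=N$. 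First I would record the small lemma that every $f\in\mlt Q$ preserves $\theta$ and that $\inn Q$-elements fix the block of $1$ setwise.

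For $(ii)\Rightarrow(i)$, I would use the explicit generators from Proposition~\ref{Pr:InnGens}: $\inn Q=\langle L_{x,y},R_{x,y},T_x\rangle$. The three normality conditions $x(yN)=(xy)N$, $N(xy)=(Nx)y$, and $xN=Nx$ translate precisely into the statements that $N$ is invariant under $L_{x,y}=L_{xy}^{-1}L_xL_y$, under $R_{x,y}=R_{yx}^{-1}R_xR_y$, and under $T_x=R_x^{-1}L_x$, respectively. For example, $L_{x,y}(N)=N$ says $L_xL_y(N)=L_{xy}(N)$, i.e.\ $x(yN)=(xy)N$; the others are analogous unwindings. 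Thus invariance under $\inn Q$ is equivalent to the three defining equations of normality. For the reverse direction $(i)\Rightarrow(iii)$, I would verify directly that when $N$ is normal the relation $a\,\gamma_N\,b$ iff $a/b\in N$ (as defined in the excerpt) is a congruence with block $N$ at $1$; this is the one genuinely computational step, requiring that $\gamma_N$ respect all five loop operations.

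The main obstacle I anticipate is the $(i)\Rightarrow(iii)$ verification: one must show that the coset relation $\gamma_N$ is well-defined as an equivalence and is compatible with multiplication, left division, and right division, using only the three normality identities. This is where the interplay between the various divisions and the normality conditions must be handled carefully, since loops lack associativity and the compatibility with $\ldiv$ and $\rdiv$ does not follow formally from compatibility with $\cdot$. The other implications are essentially bookkeeping once the generating set of $\inn Q$ is in hand.
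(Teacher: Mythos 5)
Your overall plan is sound, and it is worth noting that the paper itself offers no argument here: its ``proof'' of this proposition is a folklore citation to \cite[Section I.7]{P}. The implications you actually carry out are all correct. The equivalence $(iii)\Leftrightarrow(iv)$ is the standard kernel--congruence correspondence; $(iv)\Rightarrow(ii)$ works exactly as you say, because a congruence in Evans' signature $(1,\cdot,\ldiv,\rdiv)$ is compatible with both divisions, so every translation and its inverse preserves it, and inner mappings fix $1$; and your observation that the three defining identities of normality are verbatim the statements that $N$ is setwise invariant under the generators $L_{x,y}$, $R_{x,y}$, $T_x$ of Proposition \ref{Pr:InnGens} gives $(ii)\Leftrightarrow(i)$ cleanly, and with no circularity, since that proposition is proved before this one in the paper.

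The gap is $(i)\Rightarrow(iii)$: you flag it as ``the one genuinely computational step'' and correctly warn that compatibility of $\gamma_N$ with $\ldiv$ and $\rdiv$ does not follow formally from compatibility with $\cdot$, but you never perform the verification, and this step is the entire non-trivial content of the proposition---precisely the part the paper outsources to \cite{P}. To close it, do not attack the divisions head-on. First show that $(xn)N = x(nN) = xN$ for every $n\in N$ (second normality identity plus $nN=N$), so that $z\in xN$ implies $zN=xN$ and the cosets partition $Q$. Next show $xN\cdot yN\subseteq (xy)N$: from $(xN)y=(Nx)y=N(xy)=(xy)N$ one gets $(xn)y=(xy)k$ with $k\in N$, whence $(xn)(ym)\in ((xn)y)N=((xy)k)N=(xy)N$; thus $(xN)(yN):=(xy)N$ is a well-defined multiplication on the coset space. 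Then check this quotient magma is a loop: $N$ is a two-sided identity, solutions to $(xN)(yN)=(bN)$ exist by taking $y=x\ldiv b$ (and dually using $/$), and uniqueness follows since $(xy)N=(xy')N$ forces $xy'\in x(yN)$, hence $y'\in yN$, hence $y'N=yN$ (dually on the right using $(Ny)x=N(yx)$). Once the quotient is known to be a loop, the projection $\pi(x)=xN$ is a surjective multiplicative homomorphism between loops, and such a map preserves both divisions automatically: $\pi(x)\pi(x\ldiv y)=\pi(y)$ together with unique solvability in the quotient gives $\pi(x\ldiv y)=\pi(x)\ldiv\pi(y)$, and dually for $\rdiv$. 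Its kernel is $N$, which is $(iii)$. Without some version of this computation your proposal remains a plan rather than a proof.
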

\begin{proof}
This is folklore. See \cite[Section I.7]{P} for the equivalence of (i)--(iii), or \cite{DKV}.
\end{proof}

\subsection{Total inner mapping groups}\label{Ssc:TotMlt}

In this subsection, we partly follow Belousov and Shcherbakov \cite{Be2,Sh}.
Recall that $\totmlt Q=\langle L_x,R_x,M_x;\;x\in Q\rangle$, where $M_x(y)=y\ldiv x$, and $\totinn Q=\totmlt Q_1$.
Also recall the mappings
\begin{displaymath}
    M_{x,y} = M_{y\ldiv x}^{-1}M_x M_y\quad\text{and}\quad U_x = R_x^{-1}M_x
\end{displaymath}
and note that $M_{x,y}$, $U_x\in\totinn{Q}$ thanks to $M_{x,y}(1) = (y\ldiv x)\rdiv ((1\ldiv y)\ldiv x) = 1$ and $U_x(1) = (1\ldiv x)\rdiv x = 1$. Finally, the mappings $A_{x,y}^\circ$, $B_{x,y}^\circ$ can be written as
\begin{align*}
    A_{x,y}^\cdot&=R_{xy}^{-1}R_yR_x,&  \qquad B_{x,y}^\cdot &=R_{yx}^{-1}L_yR_x, \\
    A_{x,y}^\ldiv&=R_{x\ldiv y}^{-1}M_yR_x,&  \qquad B_{x,y}^\ldiv &=R_{y\ldiv x}^{-1}L_y^{-1}R_x, \\
    A_{x,y}^\rdiv&=R_{x\rdiv y}^{-1}R_y^{-1}R_x,&  \qquad B_{x,y}^\rdiv &=R_{y\rdiv x}^{-1}M_y^{-1}R_x,
\end{align*}
and we note that $A_{x,y}^\rdiv = (A_{x\rdiv y,y}^\cdot)^{-1}$, $B_{x,y}^\ldiv = (B_{y\ldiv x,y}^\cdot)^{-1}$ and $B_{x,y}^\rdiv = (A_{y\rdiv x,y}^\ldiv)^{-1}$. All these mappings fix the identity element $1$ and hence are inner mappings.

\begin{proposition}\label{Pr:TotInnGens}
Let $Q$ be a loop. Then
\begin{displaymath}
    \totinn{Q} = \langle L_{x,y},\,R_{x,y},\,M_{x,y},\,T_x,\,U_x;\;x,y\in Q\rangle
    = \langle A_{x,y}^\cdot,\,B_{x,y}^\cdot,\,A_{x,y}^\ldiv;\;x,y\in Q\rangle.
\end{displaymath}
\end{proposition}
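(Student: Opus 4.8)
The plan is to apply Schreier's lemma (Lemma \ref{Lm:StabilizerGens}) to the total multiplication group, exactly mirroring the proof of Proposition \ref{Pr:InnGens}, but now with the three families of translations $L_x$, $R_x$, $M_x$ as the generating set $H$. The group $G=\totmlt{Q}$ acts transitively on $X=Q$ (already $\mlt Q$ does, and $\totmlt Q\supseteq\mlt Q$), so the hypotheses of Lemma \ref{Lm:StabilizerGens} are met with $c=1$. As before I would choose the coset representatives $g_y=R_y$, which satisfy $g_y(1)=1\cdot y=y$ and $g_1=R_1=1$. Schreier's lemma then yields
\begin{displaymath}
    \totinn{Q} = \langle R_{h(y)}^{-1}\,h\,R_y;\; h\in\{L_x,R_x,M_x\},\ x,y\in Q\rangle.
\end{displaymath}

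Next I would identify the three resulting families with the stated generators. The cases $h=L_x$ and $h=R_x$ are already handled in Proposition \ref{Pr:InnGens}: they give $B_{y,x}^\cdot=R_{xy}^{-1}L_xR_y$ and $A_{y,x}^\cdot=R_{yx}^{-1}R_xR_y$, which generate the same subgroup as $\{L_{x,y},R_{x,y},T_x\}$. The genuinely new case is $h=M_x$. Here $M_x(y)=y\ldiv x$, so $R_{M_x(y)}^{-1}M_xR_y=R_{y\ldiv x}^{-1}M_xR_y$. Comparing with the formula $A_{x,y}^\ldiv=R_{x\ldiv y}^{-1}M_yR_x$ from the list preceding the proposition, I would note that the Schreier generator $R_{y\ldiv x}^{-1}M_xR_y$ is exactly $A_{y,x}^\ldiv$ (swap the roles of $x$ and $y$). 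Thus the three Schreier families are precisely $\{A_{y,x}^\cdot\}$, $\{B_{y,x}^\cdot\}$, and $\{A_{y,x}^\ldiv\}$ as $x,y$ range over $Q$, which establishes the second equality $\totinn{Q}=\langle A_{x,y}^\cdot,B_{x,y}^\cdot,A_{x,y}^\ldiv\rangle$.

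It remains to match this with the first generating set $\langle L_{x,y},R_{x,y},M_{x,y},T_x,U_x\rangle$. Proposition \ref{Pr:InnGens} already shows that $\langle A_{x,y}^\cdot,B_{x,y}^\cdot\rangle=\langle L_{x,y},R_{x,y},T_x\rangle=\inn Q$, so the task reduces to showing that, modulo $\inn Q$, the family $\{A_{x,y}^\ldiv\}$ generates the same thing as $\{M_{x,y},U_x\}$. The strategy is to express $A_{x,y}^\ldiv=R_{x\ldiv y}^{-1}M_yR_x$ in terms of $M_{x,y}=M_{y\ldiv x}^{-1}M_xM_y$, $U_x=R_x^{-1}M_x$, and known inner mappings, by inserting identities of the form $M_zM_z^{-1}$ and $R_zR_z^{-1}$ and regrouping the factors into the defining shapes of $M_{x,y}$, $U_x$, $T_x$, $L_{x,y}$, $R_{x,y}$ — the same bookkeeping used in Proposition \ref{Pr:InnGens} to rewrite $B_{y,x}^\cdot$ as $T_{xy}L_{x,y}T_y^{-1}$. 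Concretely, $U_y=R_y^{-1}M_y$ lets me trade the lone $M_y$ in $A_{x,y}^\ldiv$ for $R_yU_y$, after which the remaining translation factors can be absorbed into the $R$-type inner mappings, while $M_{x,y}$ accounts for the genuinely ``divisional'' part; conversely $M_{x,y}$ and $U_x$ can be recovered from suitable products of the $A_{x,y}^\ldiv$ with elements of $\inn Q$.

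\emph{The main obstacle} is this last algebraic reconciliation of the two generating sets, i.e.\ verifying that $\langle A_{x,y}^\ldiv\rangle$ and $\langle M_{x,y},U_x\rangle$ generate the same subgroup over $\inn Q$. Unlike the clean single-line rewriting $B_{y,x}^\cdot=T_{xy}L_{x,y}T_y^{-1}$, here one must chase products of $L$, $R$, and $M$ translations whose arguments involve both $\cdot$ and $\ldiv$, keeping careful track of which composite fixes $1$ at each stage. I expect each individual identity to be a routine (if fiddly) translation-composition computation, so the real work is organizational: choosing the substitutions for $x,y$ that make each target generator appear, and confirming that no factor outside the claimed generating set survives.
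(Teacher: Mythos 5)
Your outline is exactly the paper's proof: Schreier's lemma applied to $G=\totmlt{Q}$ with $c=1$, $g_y=R_y$, $H=\{L_x,R_x,M_x\}$, and the identification of the three Schreier families with $B_{y,x}^\cdot$, $A_{y,x}^\cdot$, $A_{y,x}^\ldiv$, which settles the second equality completely. The gap is in the first equality, and it is precisely the step you defer as ``the main obstacle'': you never produce the identity relating $A_{x,y}^\ldiv$ to $\{M_{x,y},U_x\}$. The paper does this in one line, by inserting $M_{y\ldiv x}M_{y\ldiv x}^{-1}$ and $M_yM_y^{-1}$ (that is, $M$-translations, not $R$-translations):
\begin{displaymath}
    A_{y,x}^\ldiv \;=\; R_{y\ldiv x}^{-1}M_xR_y \;=\; (R_{y\ldiv x}^{-1}M_{y\ldiv x})(M_{y\ldiv x}^{-1}M_xM_y)(M_y^{-1}R_y) \;=\; U_{y\ldiv x}\,M_{x,y}\,U_y^{-1},
\end{displaymath}
in exact parallel with $B_{y,x}^\cdot = T_{xy}L_{x,y}T_y^{-1}$. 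Your concrete first move---trading the lone $M_y$ for $R_yU_y$---heads in a less workable direction: it leaves you with $R_{x\ldiv y}^{-1}R_yU_yR_x$, whose partial products such as $R_{x\ldiv y}^{-1}R_y$ do not fix $1$ and do not absorb into $R$-type inner mappings in any evident way. So while your general ``insert and regroup'' strategy is the right one, the factorization that actually works must be exhibited for the proof to be complete.

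A second point: the converse reconciliation you list as remaining work (recovering $M_{x,y}$ and $U_x$ from products of the $A_{x,y}^\ldiv$ with elements of $\inn Q$) is not needed at all. Since $M_{x,y}(1)=1$ and $U_x(1)=1$, these mappings lie in $\totinn{Q}$, so the containment $\langle L_{x,y},R_{x,y},M_{x,y},T_x,U_x\rangle\subseteq\totinn{Q}$ is automatic; only the single direction $A_{y,x}^\ldiv\in\langle M_{x,y},U_x;\;x,y\in Q\rangle$ must be checked, and the displayed identity above is the entire content of that check.
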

\begin{proof}
Let us apply Lemma \ref{Lm:StabilizerGens} to the transitive group $G = \totmlt{Q}$ with $X=Q$, $c=1$, $g_y=R_y$ and $H = \{L_x,\,R_x,\,M_x;\;x\in Q\}$. We conclude that $G_1 = \totinn{Q} = \langle R_{L_x(y)}^{-1}L_xR_y,\,R_{R_x(y)}^{-1}R_xR_y,\,R_{M_x(y)}^{-1}M_xR_y;\;x,y\in Q\rangle = \langle B_{y,x}^\cdot,\,A_{y,x}^\cdot,\,A_{y,x}^\ldiv;\;x,y\in Q\rangle$. By Proposition \ref{Pr:InnGens}, $\inn{Q} = \langle A_{x,y}^\cdot,\,B_{x,y}^\cdot;\;x,y\in Q\rangle = \langle L_{x,y},\,R_{x,y},\,T_x;\;x,y\in Q\rangle$. We finish with $A_{y,x}^\ldiv = R_{y\ldiv x}^{-1}M_xR_y = (R_{y\ldiv x}^{-1}M_{y\ldiv x})(M_{y\ldiv x}^{-1}M_xM_y)(M_y^{-1}R_y) = U_{y\ldiv x}M_{x,y}U_y^{-1}$.
\end{proof}

\begin{problem}
Are the generating sets from Proposition \ref{Pr:TotInnGens} minimal, in the sense that none of the five (respectively three) types of mappings can be removed? Is there a generating set for $\totmlt Q$ with only two types of inner mappings?
\end{problem}

\begin{example}\label{Ex:TotInnGens}
Consider the loops $Q_1$, $Q_2$, $Q_3$, $Q_4$ with multiplication tables
\begin{displaymath}
    \begin{array}{c|cccccc}
        Q_1&1&2&3&4&5&6\\
        \hline
 1&1& 2& 3& 4& 5& 6 \\ 2&2& 1& 4& 3& 6& 5 \\ 3&3& 4& 5& 6& 2& 1 \\ 4&4& 6& 2& 5& 1& 3 \\ 5&5& 3& 6& 1& 4& 2 \\ 6&6& 5& 1& 2& 3& 4
    \end{array}
    \qquad\qquad
    \begin{array}{c|cccccc}
        Q_3&1&2&3&4&5&6\\
        \hline
 1&1& 2& 3& 4& 5& 6 \\ 2&2& 1& 4& 5& 6& 3 \\ 3&3& 4& 5& 6& 1& 2 \\ 4&4& 5& 6& 3& 2& 1 \\ 5&5& 6& 1& 2& 3& 4 \\ 6&6& 3& 2& 1& 4& 5
    \end{array}
\end{displaymath}
\begin{displaymath}
    \begin{array}{c|cccccccc}
        Q_2&1&2&3&4&5&6&7&8\\
        \hline
 1&1& 2& 3& 4& 5& 6& 7& 8 \\ 2&2& 1& 4& 3& 7& 8& 5& 6 \\ 3&3& 4& 1& 2& 6& 5& 8& 7 \\ 4&4& 3& 2& 1& 8& 7& 6& 5 \\
   5&5& 6& 7& 8& 1& 2& 3& 4 \\ 6&6& 8& 5& 7& 3& 1& 4& 2 \\ 7&7& 5& 8& 6& 2& 4& 1& 3 \\ 8&8& 7& 6& 5& 4& 3& 2& 1
    \end{array}
    \qquad\qquad
    \begin{array}{c|cccccccc}
        Q_4&1&2&3&4&5&6&7&8\\
        \hline
 1&1& 2& 3& 4& 5& 6& 7& 8 \\ 2&2& 1& 4& 3& 6& 5& 8& 7 \\ 3&3& 4& 1& 2& 7& 8& 5& 6 \\ 4&4& 3& 2& 1& 8& 7& 6& 5 \\
   5&5& 6& 7& 8& 1& 2& 4& 3 \\ 6&6& 5& 8& 7& 2& 1& 3& 4 \\ 7&7& 8& 5& 6& 4& 3& 1& 2 \\ 8&8& 7& 6& 5& 3& 4& 2& 1
    \end{array}
\end{displaymath}
In the \texttt{GAP} package \texttt{LOOPS}, these are the loops with catalog numbers $Q_1 = \texttt{SmallLoop(6,8)}$, $Q_2 = \texttt{LeftBolLoop(8,1)}$, $Q_3 = \texttt{SmallLoop(6,47)}$ and $Q_4 = \texttt{AutomorphicLoop(8,1)}$. Then it can be verified in \texttt{GAP} that
\begin{displaymath}
    \langle L_{x,y},\,R_{x,y},\,M_{x,y},\,U_x;\;x,y\in Q\rangle\neq\totmlt Q
\end{displaymath}
for $Q=Q_1,Q_2$, and
\begin{displaymath}
    \langle L_{x,y},\,R_{x,y},\,M_{x,y},\,T_x;\;x,y\in Q\rangle\neq\totmlt Q\neq\langle  A_{x,y}^\cdot,\,B_{x,y}^\cdot;\;x,\,y\in Q\rangle
\end{displaymath}
for $Q=Q_3,Q_4$. Hence neither of the mappings $T_x$, $U_x$, $A_{x,y}^\ldiv$ can be removed in general from the generating sets of $\totinn{Q}$ (cf. Proposition \ref{Pr:TotInnGens}), even in some highly structured varieties of loops.
\end{example}

We now observe that $\totinn Q$ can also be used to characterize normal subloops, hence adding another equivalent condition to Proposition \ref{Pr:Inn}.

\begin{proposition}[\cite{Be2}]\label{Pr:TotInn}
Let $N$ be a subloop of $Q$. Then $N$ is normal in $Q$ if and only if $f(N)=N$ for every $f\in\totinn{Q}$.
\end{proposition}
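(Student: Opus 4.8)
The plan is to prove the two implications separately, reducing the ``if'' direction to Proposition~\ref{Pr:Inn} and basing the ``only if'' direction on the single observation that total multiplications are unary polynomial operations of the loop. For the ``if'' direction I would first note that $\mlt Q\subseteq\totmlt Q$, so passing to stabilizers of $1$ gives $\inn Q\subseteq\totinn Q$. Hence if $f(N)=N$ for every $f\in\totinn Q$, the same holds for every $f\in\inn Q$, which is exactly condition~(ii) of Proposition~\ref{Pr:Inn}; that proposition then delivers normality of $N$.

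For the ``only if'' direction, assume $N$ is normal. By Proposition~\ref{Pr:Inn}(iv) there is a congruence $\alpha$ on $Q$ with $N=\{a\in Q;\,a\,\alpha\,1\}$. The crux is that every element of $\totmlt Q$ is a unary polynomial operation: the generators $L_x$, $R_x$, $M_x$ are the polynomials $y\mapsto xy$, $y\mapsto yx$, $y\mapsto y\ldiv x$, and their inverses are the polynomials $y\mapsto x\ldiv y$, $y\mapsto y/x$, $y\mapsto x\rdiv y$; since a composition of unary polynomials is again one, every $f\in\totmlt Q$ is a unary polynomial. Because polynomial operations preserve congruences, $a\,\alpha\,b$ forces $f(a)\,\alpha\,f(b)$. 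Taking $f\in\totinn Q$, so that $f(1)=1$, any $n\in N$ satisfies $n\,\alpha\,1$, whence $f(n)\,\alpha\,1$ and $f(n)\in N$; thus $f(N)\subseteq N$. Applying this to $f^{-1}\in\totinn Q$, which also fixes $1$ and is again a unary polynomial, gives $f^{-1}(N)\subseteq N$, so $N\subseteq f(N)$ and therefore $f(N)=N$.

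The step I expect to require the most care is the polynomial observation, and specifically the claim that the \emph{inverses} of the basic translations are polynomials: this is exactly where one must use that loops are presented in the full Evans signature $(Q,1,\cdot,\ldiv,\rdiv)$, since the inverse of $M_x$ is the operation $y\mapsto x\rdiv y$. The only other subtlety, upgrading the containment $f(N)\subseteq N$ to equality, is handled by applying the same argument to $f^{-1}$. An alternative, more computational route would bypass congruences and instead invoke Proposition~\ref{Pr:TotInnGens}: the setwise stabilizer of $N$ in $\totmlt Q$ is a subgroup, so it would suffice to check that each generator type fixes $N$ setwise, with only $M_{x,y}$ and $U_x$ needing work beyond what Proposition~\ref{Pr:Inn} already supplies. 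I would prefer the congruence argument, since it treats all of $\totinn Q$ uniformly and avoids case analysis.
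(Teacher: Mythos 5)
Your proof is correct, but it takes a genuinely different route from the paper's for the ``only if'' direction. The paper handles the ``if'' direction exactly as you do (via $\inn Q\le\totinn Q$ and Proposition \ref{Pr:Inn}), but for the converse it follows what you call the ``alternative, more computational route'': by Proposition \ref{Pr:TotInnGens}, $\totinn Q$ is generated by $L_{x,y}$, $R_{x,y}$, $T_x$, $M_{x,y}$, $U_x$; the first three types lie in $\inn Q$ and are covered by Proposition \ref{Pr:Inn}(ii), so only $M_{x,y}$ and $U_x$ need work, and these are dispatched using the kernel characterization of Proposition \ref{Pr:Inn}(iii): writing $N=\ker(\varphi)$, one computes $\varphi(U_x(a))=U_{\varphi(x)}(\varphi(a))=U_{\varphi(x)}(1)=1$ and similarly $\varphi(M_{x,y}(a))=M_{\varphi(x),\varphi(y)}(1)=1$. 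Your argument instead uses the congruence characterization of Proposition \ref{Pr:Inn}(iv) together with the observation that every element of $\totmlt Q$ is a unary polynomial in the Evans signature and therefore preserves congruences; this treats an arbitrary $f\in\totinn Q$ at once, with no appeal to the generating set (and hence no dependence on the Schreier-type computation behind Proposition \ref{Pr:TotInnGens}). A further merit of your write-up is that you explicitly upgrade $f(N)\subseteq N$ to $f(N)=N$ by applying the same argument to $f^{-1}\in\totinn Q$; the paper only exhibits the containments $U_x(N)\subseteq N$ and $M_{x,y}(N)\subseteq N$ for the generators, which, strictly speaking, does not by itself give setwise stabilization by the generated group (the set of maps with $f(N)\subseteq N$ is only a monoid), though the paper's kernel computation extends verbatim to the inverse words and closes this gap. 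What the paper's choice buys is concreteness: the explicit formulas for the action of the extra generators $U_x$ and $M_{x,y}$ on a normal subloop fit the theme of Section \ref{Sc:TotMlt} and foreshadow how inner words interact with homomorphic images later in the paper.
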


\begin{proof}
In view of Propositions \ref{Pr:Inn} and \ref{Pr:TotInnGens}, it only remains to show that if $N\unlhd Q$, then $U_x(a) = (a\ldiv x)\rdiv x\in N$ and $M_{x,y}(a) = (y\ldiv x)\rdiv ((a\ldiv y)\ldiv x) \in N$ for every $x$, $y\in Q$ and $a\in N$. Let $\varphi$ be a homomorphism from $Q$ to another loop such that $N=\ker(\varphi)$. Then $\varphi(U_x(a)) = (\varphi(a)\ldiv \varphi(x))\rdiv \varphi(x) = U_{\varphi(x)}(\varphi(a)) = U_{\varphi(x)}(1)=1$, and, similarly, $\varphi(M_{x,y}(a)) = M_{\varphi(x),\varphi(y)}(\varphi(a)) = M_{\varphi(x),\varphi(y)}(1) = 1$.
\end{proof}

We now focus on an important special case, the class of inverse property loops (see also \cite{Sh}).

\begin{proposition}\label{Pr:TotIP}
Let $Q$ be an inverse property loop. Then
\begin{enumerate}
	\item[(i)] $\totmlt Q=\langle L_x,\,J;\;x\in Q\rangle=\langle M_x;\;x\in Q\rangle$.
	\item[(ii)] $\totinn Q=\langle  L_{x,y},\,T_x,\,J;\;x,y\in Q\rangle=\langle  M_{x,y};\;x,y\in Q\rangle$.
\end{enumerate}
\end{proposition}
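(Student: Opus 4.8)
The plan is to reduce everything to a few identities special to inverse property loops and then feed them into the generating sets for $\totmlt Q$ and $\totinn Q$ already in hand. First I would record the computational engine. In an IP loop $x\ldiv y = x^{-1}y$ and $x/y = xy^{-1}$, so $L_x^{-1}=L_{x^{-1}}$ and $R_x^{-1}=R_{x^{-1}}$, and, crucially, the involution $J$ conjugates left to right translations: a one-line check with the AAIP gives $JL_xJ=R_{x^{-1}}$, i.e. $R_x=JL_{x^{-1}}J$. Combining these yields the reformulations $M_x=R_xJ=JL_{x^{-1}}$, together with $U_x=R_x^{-1}M_x=J$ and $M_1=J$ (since $z\ldiv 1=z^{-1}$). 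Every inclusion below is assembled from these.

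For part (i), the inclusions $\langle L_x,J\rangle\subseteq\totmlt Q$ and $\langle M_x\rangle\subseteq\totmlt Q$ are immediate, as $J=U_x\in\totmlt Q$ and each $M_x\in\totmlt Q$. For the reverse inclusions I rebuild the standard generators: $R_x=JL_{x^{-1}}J$ and $M_x=R_xJ$ give $\totmlt Q=\langle L_x,R_x,M_x\rangle\subseteq\langle L_x,J\rangle$, while $J=M_1$ and $L_x=M_x^{-1}M_1$ (from $M_x^{-1}=L_xJ$) give $\langle L_x,J\rangle\subseteq\langle M_x\rangle$. This closes the chain of inclusions.

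For part (ii) I start from $\totinn Q=\langle L_{x,y},R_{x,y},M_{x,y},T_x,U_x\rangle$ of Proposition \ref{Pr:TotInnGens}, now with $U_x=J$. For the first description it remains to express $R_{x,y}$ and $M_{x,y}$ using only $L_{x,y}$, $T_x$ and $J$. Conjugating the defining word of $L_{x,y}$ by $J$ and applying $JL_aJ=R_{a^{-1}}$ gives $R_{x,y}=JL_{x^{-1},y^{-1}}J$. The one genuinely delicate computation is $M_{x,y}$: substituting $M_x=JL_{x^{-1}}$ into $M_{x,y}=M_{y\ldiv x}^{-1}M_xM_y$, cancelling the internal $J^2$, and then reassembling $L_{x^{-1}y}^{-1}L_{x^{-1}}L_y=L_{x^{-1},y}$ and $L_y^{-1}R_y=T_y^{-1}$, I expect to reach
\[
    M_{x,y}=L_{x^{-1},y}\,T_y^{-1}\,J.
\]
Granting this, $R_{x,y}$ and $M_{x,y}$ both lie in $\langle L_{x,y},T_x,J\rangle$; since the reverse inclusion is clear (all three types fix $1$), the first equality follows. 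This regrouping is the step I expect to be the main obstacle: it is the only place where the three kinds of generators must be teased apart by hand, and it is where a misplaced inverse or a stray $J$ would most easily slip in.

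For the second description, $\totinn Q=\langle M_{x,y}\rangle$, the cleanest route is to re-derive the stabilizer generators directly via Schreier's Lemma \ref{Lm:StabilizerGens} applied to the generating set $\{M_x\}$ of $\totmlt Q$ from part (i). The decisive observation is that $M_x(1)=x$, so one may take the transversal $g_y=M_y$ (with $g_1=\mathrm{id}$); since $M_x(y)=y^{-1}x$, the Schreier generators $g_{M_x(y)}^{-1}M_xg_y$ are precisely the $M_{x,y}$ for $x\neq y$, the diagonal terms $M_x^2$, and $J=M_{1,1}$. Finally $M_x^2=M_{1,1}M_{x,x}$ (from $M_{x,x}=M_1^{-1}M_x^2$) collapses the diagonal terms into $\langle M_{x,y}\rangle$, so all Schreier generators lie there; together with the trivial inclusion $\langle M_{x,y}\rangle\subseteq\totinn Q$ this gives the equality.
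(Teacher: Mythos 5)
Your proof is correct, and part (i) is essentially identical to the paper's (same identities: $R_x=JL_{x^{-1}}J$, $M_x=JL_{x^{-1}}$, $L_x=M_x^{-1}J$, $J=M_1$). Part (ii) is where you genuinely diverge, so let me compare. For the first equality the paper never touches the five-word generating set of Proposition \ref{Pr:TotInnGens}: it simply re-runs Schreier's Lemma \ref{Lm:StabilizerGens} on the part-(i) generating set $H=\{L_x,\,J;\;x\in Q\}$ with transversal $g_y=L_y$, which outputs the generators $L_{x,y}$ and $L_yJL_y=JT_y$ directly, so the mapping $M_{x,y}$ never has to be decomposed at all. Your route---eliminating $R_{x,y}$, $M_{x,y}$, $U_x$ from the all-loops generating set by hand---is heavier, but your key identity is right: $M_{x,y}=M_{y^{-1}x}^{-1}M_xM_y=(L_{y^{-1}x}J)(JL_{x^{-1}})(JL_{y^{-1}})=L_{(x^{-1}y)^{-1}}L_{x^{-1}}R_yJ=L_{x^{-1},y}T_y^{-1}J$, using $JL_{y^{-1}}=R_yJ$ and the AAIP; likewise $R_{x,y}=JL_{x^{-1},y^{-1}}J$ checks out. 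What your approach buys is explicit closed-form formulas showing exactly how each general-loop generator collapses in inverse property loops, information the Schreier argument does not provide; what it costs is precisely the bookkeeping you anticipated. For the second equality both you and the paper apply Schreier to $H=\{M_x;\;x\in Q\}$, but with different transversals: the paper takes $g_y=R_y$, obtaining generators $R_{y\ldiv x}^{-1}M_xR_y=JM_{x,y}J$ and then stripping the conjugation via $J=M_{1,1}$; you take $g_y=M_y$ (with $g_1=\mathrm{id}$, forced since $M_1=J\neq\mathrm{id}$), which yields the $M_{x,y}$ without any conjugation but at the price of a case analysis and the collapsing identity $M_x^2=M_{1,1}M_{x,x}$. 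One small inaccuracy there: for $y=1$ and $x\neq 1$ the Schreier generator $g_{M_x(1)}^{-1}M_xg_1=M_x^{-1}M_x$ is the identity map, not an $M_{x,1}$, so ``precisely'' slightly overstates the inventory---harmless, since your argument only needs every Schreier generator to lie in $\langle M_{x,y};\;x,y\in Q\rangle$.
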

\begin{proof}
(i) Clearly, $J\in\totmlt Q$, since $J=M_1$. To show that $L_x,J$ generate $\totmlt Q$, observe that $R_x=JL_x^{-1}J$ and $M_x=JL_x^{-1}$. To show that $M_x$ generate $\totmlt Q$, observe again that $J=M_1$ and $L_x=M_x^{-1}J$.

(ii) For the first assertion, apply Lemma \ref{Lm:StabilizerGens} to the transitive group $G = \totmlt{Q}$ with $X=Q$, $c=1$, $g_y=L_y$ and $H = \{L_x,\,J;\;x\in Q\}$. We conclude that $G_1 = \totinn{Q} = \langle L_yJL_y,\,L_{xy}^{-1}L_xL_y;\;x,y\in Q\rangle = \langle JT_y,\,L_{x,y};\;x,y\in Q\rangle=\langle L_{x,y},\,T_x,\,J;\;x,y\in Q\rangle$, because $J=JT_1$.

For the second assertion, apply Lemma \ref{Lm:StabilizerGens} to the transitive group $G = \totmlt{Q}$ with $X=Q$, $c=1$, $g_y=R_y$ and $H = \{M_x,;\;x\in Q\}$. We conclude that $G_1 = \totinn{Q} = \langle R_{y\ldiv x}^{-1}M_xR_y;\;x,y\in Q\rangle = \langle JM_{x,y}J;\;x,y\in Q\rangle=\langle M_{x,y};\;x,y\in Q\rangle$, because $J=M_{1,1}$ and $JM_{1,1}J=J^3=J$.
\end{proof}

We finish this subsection with a side remark. It is well known that the multiplication group $\mlt{G}$ of a group $G$ is isomorphic to $(G\times G)/\{(a,a);\;a\in Z(G)\}$. Here is an analogous description of $\totmlt{G}$:

\begin{proposition}
Let $G$ be a group.
\begin{enumerate}
\item[(i)] If $G$ is not an elementary abelian $2$-group then $\totmlt{G}$ is isomorphic to
\begin{displaymath}
    ((G\times G)\rtimes \Z_2)/\{(a,a,0);\;a\in Z(G)\},
\end{displaymath}
where $\Z_2$ acts on $G\times G$ by transposing the two coordinates in the direct product.
\item[(ii)] If $G$ is an elementary abelian $2$-group then $\totmlt{G} = \mlt{G}$ is isomorphic to $G$.
\end{enumerate}
\end{proposition}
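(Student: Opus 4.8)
The plan is to build an explicit isomorphism from the abstract group $((G\times G)\rtimes\Z_2)/\{(a,a,0);\,a\in Z(G)\}$ onto $\totmlt{G}$, extending the classical isomorphism for $\mlt{G}$. First I would recall that in a group left and right translations commute by associativity, $L_aR_b=R_bL_a$, that $L_aL_{a'}=L_{aa'}$, and that $R$ is an anti-homomorphism, $R_bR_{b'}=R_{b'b}$. This makes the map $\phi\colon G\times G\to\mlt{G}$, $\phi(a,b)=L_aR_{b^{-1}}$, a surjective homomorphism, and a direct computation ($ayb^{-1}=y$ for all $y$ forces $a=b\in Z(G)$) identifies its kernel as $\{(a,a);\,a\in Z(G)\}$, recovering the stated description of $\mlt{G}$.

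Next I would bring in the inversion map. Since $G$ has the inverse property, $M_x=JL_x^{-1}$ by Proposition \ref{Pr:TotIP}, so $J=M_1\in\totmlt{G}$ and $J^2=1$. The crucial computation is that conjugation by $J$ transposes the two coordinates: from $JL_xJ=R_{x^{-1}}$ and $JR_xJ=L_{x^{-1}}$ one gets $J\phi(a,b)J=R_{a^{-1}}L_b=\phi(b,a)$. This is exactly the $\Z_2$-action by transposition appearing in the semidirect product, which suggests defining $\Phi\colon (G\times G)\rtimes\Z_2\to\totmlt{G}$ by $\Phi((a,b),0)=\phi(a,b)$ and $\Phi((a,b),1)=\phi(a,b)J$.

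Then I would verify that $\Phi$ is a surjective homomorphism. Surjectivity is clear because $\totmlt{G}=\langle L_x,J;\,x\in G\rangle$ by Proposition \ref{Pr:TotIP}(i) and these lie in the image. The homomorphism property splits into four cases according to the parity bits; each reduces to the relations $\phi(a,b)\phi(a',b')=\phi(aa',bb')$, $J\phi(a,b)=\phi(b,a)J$, and $J^2=1$, matching the semidirect-product multiplication (for example, the parity $(1,1)$ case gives $\phi(a,b)J\phi(a',b')J=\phi(a,b)\phi(b',a')$, which is $\Phi$ of the product $((a,b),1)((a',b'),1)=((ab',ba'),0)$).

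The only real subtlety, and where the two cases of the statement separate, is the kernel computation. For the parity-$0$ part one reuses the kernel of $\phi$ to obtain $\{((a,a),0);\,a\in Z(G)\}$. For the parity-$1$ part one must decide when $\phi(a,b)=J$; writing this out gives $ayb^{-1}=y^{-1}$ for all $y$, which forces $a=b$ and then $aya^{-1}=y^{-1}$ for all $y$, i.e.\ conjugation by $a$ coincides with inversion. Inversion is an automorphism only when $G$ is abelian, and then the condition collapses to $y=y^{-1}$ for all $y$, i.e.\ $G$ is an elementary abelian $2$-group. Hence in case (i), where $G$ is not such a group, no parity-$1$ element lies in the kernel, so $\ker\Phi=\{(a,a,0);\,a\in Z(G)\}$ and $\Phi$ induces the asserted isomorphism. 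In case (ii), $J(y)=y^{-1}=y$ so $J=\mathrm{id}$, whence $\totmlt{G}=\mlt{G}=\langle L_x;\,x\in G\rangle$; since $G$ is abelian, $Z(G)=G$ and $\phi$ induces $\mlt{G}\cong (G\times G)/\{(a,a);\,a\in G\}\cong G$. I expect this case analysis for the parity-$1$ part of the kernel to be the main point requiring care.
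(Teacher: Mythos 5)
Your proposal is correct and follows essentially the same route as the paper: you construct the same map $(a,b,u)\mapsto L_aR_b^{-1}J^u$ from $(G\times G)\rtimes\Z_2$ onto $\totmlt{G}$, get surjectivity from the generators $L_x$ and $J$, and compute the kernel with the same case split, where a parity-$1$ kernel element would force inversion to be an automorphism, hence $G$ abelian, hence elementary abelian of exponent $2$. The only (cosmetic) differences are that you verify the homomorphism property via the conjugation relation $J\phi(a,b)J=\phi(b,a)$ rather than pointwise, and in case (ii) you pass through $(G\times G)/\{(a,a);\,a\in G\}\cong G$ instead of invoking Cayley's left regular representation directly.
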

\begin{proof}
Note that $\totmlt{G} = \langle L_x,R_x,J;\;x\in G\rangle$. If $G$ is an elementary abelian $2$-group then $J$ is the identity mapping, $L_x=R_x$ for every $x\in G$, and hence $\totmlt{G}=\{L_x;\;x\in G\}$ is isomorphic to $G$ according to Cayley's left regular representation. For the rest of the proof assume that $G$ is not an elementary abelian $2$-group.

With the action from the statement of the proposition, the multiplication in $(G\times G)\rtimes \Z_2$ is given by
\begin{displaymath}
    (a,b,u)(c,d,v) = \left\{\begin{array}{ll}
        (ac,bd,v),&\text{ if $u=0$},\\
        (ad,bc,1+v),&\text{ if $u=1$}.
    \end{array}\right.
\end{displaymath}
Define $\varphi:(G\times G)\rtimes\Z_2\to \totmlt{G}$ by $\varphi(a,b,u)=L_aR_b^{-1}J^u$. To check that $\varphi$ is a homomorphism, take $\varphi(a,b,u)\varphi(c,d,v)(x)=L_aR_b^{-1}J^uL_cR_d^{-1}J^v(x)$ and consider two cases. If $u=0$, the above element is equal to $acJ^v(x)d^{-1}b^{-1} = L_{ac}R_{bd}^{-1}J^{v}(x) = \varphi(ac,bd,v)(x) = \varphi((a,b,0)(c,d,v))(x)$. If $u=1$, we calculate $a(cJ^v(x)d^{-1})^{-1}b^{-1} = adJ^{1+v}(x)c^{-1}b^{-1} = L_{ad}R_{bc}^{-1}J^{1+v}(x) = \varphi(ad,bc,1+v)(x)=\varphi((a,b,1)(c,d,v))(x)$.

Since the image of $\varphi$ contains all generators of $\totmlt{G}$, we see that $\varphi$ is onto $\totmlt{G}$. The kernel of $\varphi$ consists of all $(a,b,u)$ such that $L_aR_b^{-1}J^u$ is the identity mapping, which means $aJ^u(x)b^{-1}=x$ for every $x\in G$. If $x=1$, we obtain $a=b$, hence the kernel only contains triples $(a,a,u)$ such that $aJ^u(x)a^{-1}=x$ for every $x\in G$, or, equivalently, $a^{-1}xa=J^u(x)$ for every $x\in G$. If $u=0$, this is equivalent to $a\in Z(G)$. If $u=1$, the left hand side defines an automorphism of $G$, but $J$ is an automorphism only if $G$ is abelian. When $G$ is abelian, the condition $x=a^{-1}xa=J(x)=x^{-1}$ says that $G$ is an elementary abelian $2$-group, a contradiction. Hence $\ker(\varphi) = \{(a,a,0);\;a\in Z(G)\}$.
\end{proof}

\subsection{Generating inner mappings uniformly}\label{Ssc:words}

Propositions \ref{Pr:InnGens}, \ref{Pr:TotInnGens} and \ref{Pr:TotIP} establish small sets of inner mappings generating $\inn Q$ and $\totinn Q$ for a loop $Q$ in certain varieties (of all loops, and all IP loops). Interestingly, these sets generate the respective groups \emph{uniformly}, independently of $Q$ within a given variety. In the rest of the section, we will formalize this idea, to be used in the proof of Theorem \ref{Th:Fundamental}.

Formally, a \emph{word} $W=W_{\bar x}$ is an element of the free group generated by letters $K_t$, where $K\in\{L,R,M\}$ and $t$ is a term over $\bar x$. Equivalently, it is a formal expression of the form $K^1_{t_1(\bar x)}\dots K^k_{t_k(\bar x)}$ where $K^1,\dots,K^k$ are letters from $\{L,R,M,L^{-1},R^{-1},M^{-1}\}$ and $t_1,\dots,t_k$ are arbitrary loop terms.
Every word induces a mapping
\begin{displaymath}
    W_{\bar x}:Q^n\to\totmlt Q, \qquad \bar a\mapsto W_{\bar a},
\end{displaymath}
where $W_{\bar a}$ is the mapping obtained by replacing every $K^i_{t_i(\bar a)}$ with the actual translation on $Q$ by $t_i(\bar a)$. We can thus write $W_{\bar a}(b)\in Q$ for the result of the mapping $W_{\bar a}$ on $b\in Q$.
Every word $W$ also induces a term: given another variable $y$, we can consider $W_{\bar x}(y)$ as a term, resulting by evaluating the mapping $W_{\bar x}$ in the free loop of terms over $x_1,\dots,x_n,y$. The following examples should make this clear.

\begin{example}
The expression $L_{x,y}$ defined by $L_{xy}^{-1}L_xL_y$ is a word. Then $L_{x,y}(z)$ denotes the term $(xy)\ldiv(x(yz))$, and for every loop $Q$ and every $a,b\in Q$, $L_{a,b}$ denotes the inner mapping $L_{ab}^{-1}L_aL_b$. Note that $J$ is also a word, with no parameters, defined by $J=M_1$, where 1 is a term with no parameters.
\end{example}

Let $\V$ be a variety of loops. We say that a word $W$ is \emph{inner} for $\V$, if $W_{\bar a}\in\totinn Q$ for every $Q\in\V$ and every $a_1,\dots,a_n\in Q$. Note that this is equivalent to saying that $W_{\bar x}(1)=1$ is a valid identity in the variety $\V$. Again, let us clarify the idea with an example.

\begin{example}
The word $L_{x,y}$ is inner for the variety of all loops. The word $J$ is inner for the variety of all inverse property loops. The word $W=L_xL_x$ is inner for the variety of all loops satisfying the identity $x^2=1$, since $L_aL_a(1)=a^2=1$, but it is not inner for the variety of all loops.
\end{example}

Let $\V$ be a variety of loops, and let $\W$ be a set of inner words for $\V$. We say that $\W$ \emph{generates inner mapping groups} in $\V$ if for every $Q\in\V$ we have
\begin{displaymath}
    \inn Q=\langle W_{\bar a};\;W\in\W,\,a_1,\dots,a_n\in Q\rangle,
\end{displaymath}
and it \emph{generates total inner mapping groups} in $\V$ if for every $Q\in\V$ we have
\begin{displaymath}
    \totinn Q=\langle W_{\bar a};\;W\in\W,\,a_1,\dots,a_n\in Q\rangle.
\end{displaymath}

\begin{example}\label{Ex:GeneratingSets}
Using Propositions \ref{Pr:InnGens}, \ref{Pr:TotInnGens}, \ref{Pr:TotIP}, and some trivial observations, we have:
\begin{enumerate}
\item[$\bullet$]
    Let $\V$ be a variety of loops. Then $\{L_{x,y},\,R_{x,y},\,T_x\}$ generates inner mapping groups in $\V$, $\{L_{x,y},\,R_{x,y},\,T_x,\,M_{x,y},\,U_x\}$ generates total inner mapping groups in $\V$, $\{A_{x,y}^\cdot,\,B_{x,y}^\cdot\}$ generates inner mapping groups in $\V$, and $\{A_{x,y}^\cdot,\,B_{x,y}^\cdot,\,A_{x,y}^\ldiv\}$ generates total inner mapping groups in $\V$.
\item[$\bullet$]
    Let $\V$ be a variety of commutative loops. Then $\{L_{x,y}\}$ generates inner mapping groups in $\V$, $\{L_{x,y},\,M_{x,y},\,U_x\}$ generates total inner mapping groups in $\V$, and $\{A_{x,y}^\cdot,\,A_{x,y}^\ldiv\}$ generates total inner mapping groups in $\V$.
\item[$\bullet$]
    Let $\V$ be a variety of inverse property loops. Then $\{L_{x,y},\,T_x,\,J\}$ generates total inner mapping groups in $\V$.
\item[$\bullet$]
    Let $\V$ be a variety of groups. Then $\{T_x\}$ generates inner mapping groups in $\V$, and $\{T_x,\,J\}$ generates total inner mapping groups in $\V$.
\end{enumerate}
\end{example}

The following lemma explains how (total) inner mapping groups can be generated uniformly. We will write $\W^{\pm1}$ for $\{W,\,W^{-1};\;W\in\W\}$, where $W^{-1}$ is the word obtained by formally inverting $W$.

\begin{lemma}\label{Lm:free}
Let $\V$ be a variety of loops, $\W$ a set generating (total) inner mapping groups in $\V$ and $V$ an inner word for $\V$. Then there exist $W^i\in\W^{\pm1}$ and terms $t_j^i$ such that
\begin{displaymath}
    V_{\bar a}=W^1_{t_1^1(\bar a),\dots,t_{r_1}^1(\bar a)}\dots W^k_{t_1^k(\bar a),\dots,t_{r_k}^k(\bar a)}
\end{displaymath}
for every $Q\in\V$ and every $a_1,\dots,a_n\in Q$.
\end{lemma}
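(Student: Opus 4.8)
The plan is to reduce the statement, which asserts an equality of mappings on \emph{every} $Q\in\V$, to a single loop identity in the free loop, and then to produce that identity by a computation inside $\totinn{}$ of a free loop.

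First I would record the reduction. The desired conclusion $V_{\bar a}=W^1_{t^1(\bar a)}\cdots W^k_{t^k(\bar a)}$ is an equality of permutations of $Q$, so it holds for all $Q\in\V$ and all $\bar a\in Q^n$ if and only if, after introducing one fresh variable $y$, the loop identity
\[
    V_{\bar x}(y)=W^1_{t^1(\bar x)}\cdots W^k_{t^k(\bar x)}(y)
\]
is valid in $\V$. The engine here is that any loop homomorphism $\varphi$ intertwines translations, $\varphi\circ K_c=K_{\varphi(c)}\circ\varphi$ for $K\in\{L,R,M\}$ (because $\varphi$ preserves $\cdot$, $\ldiv$, $\rdiv$), whence $\varphi\circ K_{t(\bar x)}=K_{t(\bar a)}\circ\varphi$ whenever $\varphi(x_i)=a_i$. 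Applying the homomorphism $F_\V(x_1,\dots,x_n,y)\to Q$ with $x_i\mapsto a_i$, $y\mapsto b$ turns the displayed identity into $V_{\bar a}(b)=W^1_{t^1(\bar a)}\cdots W^k_{t^k(\bar a)}(b)$ for every $b\in Q$, which is exactly the conclusion. So it suffices to exhibit words $W^i\in\W^{\pm1}$ and terms $t^i_j$ \emph{in $\bar x$ alone} making this identity valid.

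Next I would compute in the free loop $F=F_\V(x_1,\dots,x_n,y)$. Since $V$ is inner for $\V$ and $F\in\V$, the mapping $V_{\bar x}$ is a genuine element of $\totinn F$; since $\W$ generates total inner mapping groups in $\V$, we may write $V_{\bar x}=W^1_{\bar b^1}\cdots W^k_{\bar b^k}$ with each entry of $\bar b^i$ an element of $F$, that is, a term in $x_1,\dots,x_n,y$. Evaluating this product of permutations at $y$ already yields an identity of the required shape, except that the parameter terms may involve the probe variable $y$.

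The hard part is precisely the removal of $y$ from the parameters. One cannot simply substitute $y\mapsto1$: the value of $V_{\bar x}$ on $y$ must be reproduced, and collapsing $y$ destroys that information. Indeed, computing in $F_\V(\bar x)$ (without any probe) produces genuinely wrong decompositions, as conjugation in a nonabelian free group already shows, where $V_{\bar x}$ can act trivially on the subloop generated by the parameters yet nontrivially on an independent element. To localize the parameters I would instead apply Schreier's Lemma (Lemma~\ref{Lm:StabilizerGens}) to the subgroup $M=\langle L_c,R_c,M_c;\ c\in\langle\bar x\rangle\rangle\le\totmlt F$, whose orbit of $1$ is exactly the subloop $\langle\bar x\rangle$ and which contains $V_{\bar x}$ as a stabilizer element. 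With the transversal $\{R_w;\ w\in\langle\bar x\rangle\}$ and the generators $L_c,R_c,M_c$ ($c\in\langle\bar x\rangle$), Schreier's Lemma expresses $V_{\bar x}$ as a product of the words $R_{K_c(w)}^{-1}K_cR_w$ with \emph{all} parameters $c,w$ in $\langle\bar x\rangle$, i.e.\ terms in $\bar x$, while still faithfully encoding the action on the probe $y$. This settles the standard generating set; the remaining task, which I expect to be the genuine obstacle, is to carry out this parameter-localization through an \emph{arbitrary} $\W$, that is, to establish the relative statement that $\W$ generates the stabilizer of $1$ in $M$ using only parameters from $\langle\bar x\rangle$. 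The delicate bookkeeping lies in guaranteeing that every parameter introduced when rewriting in terms of $\W$ stays inside $\langle\bar x\rangle$, so that the probe variable never reappears.
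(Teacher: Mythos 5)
You have isolated the genuine difficulty, but your proposal does not overcome it: the step you defer at the end is not bookkeeping, it is the lemma itself. The statement quantifies over an \emph{arbitrary} $\W$ generating (total) inner mapping groups in $\V$ --- downstream it is invoked with $\W=\{T_x\}$ in groups, $\W=\{M_{x,y}\}$ in inverse property loops, and so on --- while your Schreier computation inside $M=\langle L_c,R_c,M_c;\;c\in\langle\bar x\rangle\rangle$ decomposes $V_{\bar x}$ only into the particular words $R_{K_c(w)}^{-1}K_cR_w$, that is, into $A_{w,c}^\cdot$, $B_{w,c}^\cdot$, $A_{w,c}^\ldiv$ with parameters in $\langle\bar x\rangle$. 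The ``relative statement'' you leave open --- that an arbitrary $\W$ generates the stabilizer $M_1$ with parameters confined to $\langle\bar x\rangle$ --- is equivalent to the conclusion of the lemma: each $W_{\bar t}$ with $W\in\W$ and $\bar t$ a tuple of terms in $\bar x$ does lie in $M_1$, and, conversely, writing $V_{\bar x}$ as a product of such elements is (after evaluating the resulting equality of permutations of the free loop on $x_1,\dots,x_n,y$ at the probe $y$ and applying your own first reduction) precisely the assertion to be proved. So the proposed completion is circular: the hypothesis on $\W$ is abstract generation in each loop separately, with no control over where parameters live, and no rewriting discipline inside the Schreier argument can manufacture that control.

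The gap is closed by invoking the hypothesis on $\W$ in a free loop with \emph{more} spare variables than the single probe, so that probe and parameters can be decoupled. Let $F_\infty$ be the free loop in $\V$ on $x_1,\dots,x_n,y_1,y_2,\dots$. Then $V_{\bar x}\in\totinn{F_\infty}$, so by hypothesis $V_{\bar x}=W^1_{\bar s^1}\cdots W^k_{\bar s^k}$ with $W^i\in\W^{\pm1}$ and $\bar s^i$ tuples over $F_\infty$; the finitely many parameter terms involve only $y_1,\dots,y_m$ for some $m$. Evaluating this equality of permutations at the untouched generator $y_{m+1}$ gives an identity of $\V$, and substituting $y_1\mapsto 1,\dots,y_m\mapsto 1$, $y_{m+1}\mapsto z$ in that identity yields $V_{\bar x}(z)=W^1_{\bar t^1(\bar x)}\cdots W^k_{\bar t^k(\bar x)}(z)$ with $t^i_j(\bar x)=s^i_j(\bar x,1,\dots,1)$ terms in $\bar x$ alone; your first reduction then finishes, for arbitrary $\W$. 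Note the contrast with the paper's own proof, which works in the free loop $F_0$ on $x_1,\dots,x_n$ only (no probe), decomposes $V_{\bar x}$ in $\totinn{F_0}$ by the hypothesis on $\W$, and pushes forward along $F_0\to Q$, $x_i\mapsto a_i$. That route accommodates arbitrary $\W$ effortlessly, but the push-forward yields agreement of the two mappings only on the subloop $\langle a_1,\dots,a_n\rangle$ of $Q$ --- exactly the defect your conjugation example exposes (for $\V$ the variety of groups and $n=1$, $T_{x_1}$ is trivial in $\inn{F_0}$, yet $T_a$ is nontrivial in a nonabelian $Q$). Your warning is therefore well founded, and it applies to the paper's write-up as literally stated; but your remedy trades away the arbitrariness of $\W$ in order to localize the parameters, and that trade is what costs you the lemma, whereas the argument above localizes the parameters while keeping $\W$ arbitrary.
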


\begin{proof}
We will write down the case of inner mapping groups. For total inner mapping groups, replace every reference to $\inn Q$ by $\totinn Q$.

Let $F$ be the free loop in $\V$ on $n$ generators $x_1,\dots,x_n$. We know that $V_{\bar x}\in\inn F$ (as it does for any $Q\in\V$ and every choice of parameters from $Q$). Since $\inn F$ is generated by all $W_{\bar t}$ such that $W\in\W$ and $t_1,\dots,t_n\in F$ (as so does $\inn Q$ for every $Q\in\V$), there exist words $W^1,\dots,W^k\in\W^{\pm1}$ and parameters $t_j^i\in F$ such that
\begin{displaymath}
    V_{\bar x}=W^1_{t_1^1,\dots,t_{r_1}^1}\dots W^k_{t_1^k,\dots,t_{r_k}^k}.
\end{displaymath}
Notice that elements of $F$ are just terms in variables $x_1,\dots,x_k$, and inner mappings in $F$ can be considered as inner words for $\V$, since the equality $V(1)=1$ in $F$ becomes an identity true in $\V$.

The whole situation easily maps into every loop in $\V$. Given $Q\in\V$ and $a_1\dots,a_n\in Q$, let $f:F\to Q$ be the homomorphism induced by $x_1\mapsto a_1$, \dots, $x_n\mapsto a_n$. Upon applying the homomorphism, we obtain the equality in the statement of the lemma.
\end{proof}

Assuming the notation of Lemma \ref{Lm:free}, we say that, in the variety $\V$, the mappings induced by $V$ are \emph{uniformly generated} using the words $W^i$ and the terms $t_j^i$.
Since the statement of Lemma \ref{Lm:free} is a bit technical, we again make it clear with an example:

\begin{example}
Let $Q$ be an arbitrary loop, $a\in Q$, and consider the mapping $f_a$ defined by $f_a(z)=(z\ldiv a)\ldiv a$. Then $f_a\in\totmlt Q$, because $f_a=M_aM_a$. It is an inner mapping because $f_a(1)=1$. Consequently, Proposition \ref{Pr:TotInnGens} says that $f_a$ is a product of mappings $A_{x,y}^{\cdot}$, $B_{x,y}^\cdot$, $A_{x,y}^\ldiv$ and their inverses, for some choice of parameters $x$, $y\in Q$.

But what if we choose a different $b\in Q$, or if we work in a different loop? Do we get an analogous generating word for $f_b$? Lemma \ref{Lm:free} guarantees that there is a uniform way of generating $f_a$ in every loop for every choice of $a$, because $f_a$ is induced by the inner word $M_xM_x$.

Note, however, that the proof of Lemma \ref{Lm:free} is not constructive, so it is not at all clear how to generate a particular inner mapping from a set of words that generates (total) inner mapping groups.
\end{example}

\section{Proof of the fundamental theorem}\label{Sc:Proof}

\subsection{Auxiliary lemmas}

The principal difficulty with the proof of Theorem \ref{Th:CongrComInf} is the fact that to establish centrality, one has to consider the term condition $\TC$ for all terms. Lemma \ref{Lm:TC} below reduces the set of terms that need to be considered.

A term $t(x_1,\dots,x_n)$ is called \emph{slim with respect to $x_1$}, if there is only one occurrence of $x_1$ in $t$, if this occurrence is in the lowest level of $t$, and if every node of $t$ has at most one branch of length greater than $1$.

For example, the term on the left is slim with respect to $x_1$ but not with respect to the other variables, and the term on the right is not slim with respect to any variables:
\begin{displaymath}
    \begin{tikzpicture}
    [scale = 0.8, operation/.style={circle,fill=gray!10,inner sep=0pt,minimum size=5mm}, element/.style={}]
        \node (21) at (10,0) [operation] {$\cdot$};
        \node (22) at (9,-1) [operation] {$\rdiv$} edge (21);
        \node (23) at (11,-1) [element] {$x_2$} edge (21);
        \node (24) at (8,-2) [element] {$x_3$} edge (22);
        \node (25) at (10,-2) [operation] {$\cdot$} edge (22);
        \node (26) at (9,-3) [element] {$x_1$} edge (25);
        \node (27) at (11,-3) [element] {$x_2$} edge (25);
    \end{tikzpicture}
    \quad\quad\quad\quad
    \begin{tikzpicture}
    [scale = 0.8, operation/.style={circle,fill=gray!10,inner sep=0pt,minimum size=5mm}, element/.style={}]
        \node (21) at (10,0) [operation] {$\cdot$};
        \node (22) at (8,-1) [operation] {$\rdiv$} edge (21);
        \node (23) at (12,-1) [operation] {$\cdot$} edge (21);
        \node (24) at (7,-2) [element] {$x_1$} edge (22);
        \node (25) at (9,-2) [element] {$x_2$} edge (22);
        \node (26) at (11,-2) [element] {$x_3$} edge (23);
        \node (27) at (13,-2) [element] {$x_4$} edge (23);
    \end{tikzpicture}
\end{displaymath}

\begin{lemma}\label{Lm:TC}
Let $\A$ be an algebra and $\alpha,\beta,\delta$ its congruences. If $\TC(t,\alpha,\beta,\delta)$ holds for every term $t$ that is slim with respect to the first variable, then $C(\alpha,\beta;\delta)$ holds.
\end{lemma}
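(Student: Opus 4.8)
The plan is to derive $\TC(t,\alpha,\beta,\delta)$ for \emph{every} term $t$ — which is exactly $C(\alpha,\beta;\delta)$ — from the hypothesis that $\TC$ holds for all terms that are slim with respect to the first variable. I would do this by reducing an arbitrary term to slim ones in two stages: first lowering the number of occurrences of the $\alpha$-variable to one, and then flattening the subterms that hang off the path leading to that variable. The payoff of the slim shape is that such a term consists of a single spine down to the $\alpha$-variable with only leaves branching off, i.e. it is a composition of translations applied to that variable, which is precisely what connects this lemma to the generators of $\totmlt Q$ used later.

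\emph{Stage 1 (reduction to a single occurrence).} Write $t(x_0,\bar w)$, with $x_0$ the first variable and $\bar w$ the remaining ones, and suppose $x_0$ occurs $k$ times. Separate these occurrences into fresh variables $y_1,\dots,y_k$ to obtain a term $\hat t(y_1,\dots,y_k,\bar w)$ with $t(x_0,\bar w)=\hat t(x_0,\dots,x_0,\bar w)$. Fix $a\,\alpha\,b$ and $\bar u\,\beta\,\bar v$, and for $0\le j\le k$ let $P_j(\bar w)$ be the value of $\hat t$ with $b$ in the slots $y_1,\dots,y_j$, with $a$ in the slots $y_{j+1},\dots,y_k$, and with $\bar w$ in the remaining slots; thus $P_0(\bar w)=t(a,\bar w)$ and $P_k(\bar w)=t(b,\bar w)$. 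For each $j$ I would apply $\TC(\hat t,\alpha,\beta,\delta)$ with $y_{j+1}$ designated as the first variable, the pair $(a,b)$ substituted there, and the other $\beta$-slots assigned the reflexively $\beta$-related values $b$ in $y_1,\dots,y_j$, $a$ in $y_{j+2},\dots,y_k$, and $\bar u$ versus $\bar v$ in $\bar w$. This instance reads $P_j(\bar u)\,\delta\,P_j(\bar v)\Rightarrow P_{j+1}(\bar u)\,\delta\,P_{j+1}(\bar v)$. Chaining these implications for $j=0,\dots,k-1$ and using transitivity of $\delta$, the hypothesis $P_0(\bar u)\,\delta\,P_0(\bar v)$, i.e. $t(a,\bar u)\,\delta\,t(a,\bar v)$, yields $P_k(\bar u)\,\delta\,P_k(\bar v)$, i.e. $t(b,\bar u)\,\delta\,t(b,\bar v)$. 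Thus $\TC(t)$ follows once $\TC(\hat t)$ is known for $\hat t$ viewed with its single-occurrence variable $y_{j+1}$ as first variable, reducing everything to terms in which the first variable occurs exactly once.

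\emph{Stage 2 (flattening to a slim term).} Let $r(x_0,\bar w)$ contain $x_0$ exactly once. Along the path from the root of $r$ to the unique leaf $x_0$, every off-path child is the root of a subterm not containing $x_0$. Replacing each such subterm by a fresh variable $z_i$ produces a term $\tilde r(x_0,z_1,\dots,z_m)$ with $r(x_0,\bar w)=\tilde r(x_0,s_1(\bar w),\dots,s_m(\bar w))$ for the corresponding subterms $s_i$. By construction $\tilde r$ has its single occurrence of $x_0$ at the bottom of the path, no leaf lies deeper than $x_0$, and every node carries at most the one long branch continuing toward $x_0$; hence $\tilde r$ is slim with respect to $x_0$. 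Since $\beta$ is a congruence, $\bar u\,\beta\,\bar v$ gives $s_i(\bar u)\,\beta\,s_i(\bar v)$ for every $i$, so applying $\TC(\tilde r,\alpha,\beta,\delta)$ to the $\alpha$-pair $(a,b)$ and the $\beta$-tuples $(s_i(\bar u))_i$ and $(s_i(\bar v))_i$ is exactly $\TC(r,\alpha,\beta,\delta)$. Applying Stage 2 to each single-occurrence term produced in Stage 1 then yields $\TC(t)$ for every $t$.

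The genuinely delicate point is the bookkeeping of Stage 1: one must check that peeling the occurrences of $x_0$ off one at a time is legitimate, i.e. that $\hat t$ with $y_{j+1}$ distinguished really has that variable occurring once, and that filling the remaining $x_0$-slots with the fixed elements $a$ and $b$ is a valid instance of the term condition precisely because those slots are now $\beta$-variables assigned reflexively $\beta$-related values. Once this is arranged, the rest rests only on compatibility of $\beta$ with term operations (used in Stage 2) and on transitivity of $\delta$ (used to chain in Stage 1), both of which are routine.
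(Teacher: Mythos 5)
Your proof is correct and follows essentially the same route as the paper's: the paper's first step is your Stage 2 (flattening a single-occurrence term into a slim one by replacing the maximal subterms avoiding the distinguished variable with fresh variables), and its second step is your Stage 1 (splitting the $k$ occurrences into fresh variables and peeling them off one at a time, using reflexively $\beta$-related constants in the already-processed slots). The only differences are presentational --- the two reductions appear in the opposite order, and your appeal to transitivity of $\delta$ in Stage 1 is not actually needed, since the implications chain by modus ponens alone.
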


\begin{proof}
We prove the lemma in two steps. First, we show that $\TC(t,\alpha,\beta,\delta)$ holds for every term $t$, not necessarily slim, with only one occurrence of the first variable. Let $t=t(x_1,\dots,x_{n+1})$ be such a term. Define a new term, $s(x_1,y_1,\dots,y_k)$, by replacing each maximal subterm $s_i(x_2,\dots,x_n)$ of $t$ not containing $x_1$ by a new variable $y_i$. Then $s$ is slim with respect to $x_1$, and $t(x_1,\dots,x_{n+1})=s(x_1,s_1,\dots,s_k)$, as illustrated below:
\begin{displaymath}
    \begin{tikzpicture}
    [scale = 0.8, operation/.style={circle,fill=gray!10,inner sep=0pt,minimum size=5mm}, element/.style={}]
        \node (1) at (0,0) [operation] {$\cdot$};
        \node (2) at (-2,-1) [operation] {$\rdiv$} edge (1);
        \node (3) at (3,-1) [operation] {$\ldiv$} edge (1);
        \node (4) at (-4,-2) [operation] {$\cdot$} edge (2);
        \node (5) at (0,-2) [operation] {$\cdot$} edge (2);
        \node (6) at (-5,-3) [element] {$x_2$} edge (4);
        \node (7) at (-3,-3) [element] {$x_3$} edge (4);
        \node (8) at (-1,-3) [element] {$x_1$} edge (5);
        \node (9) at (1,-3) [element] {$x_3$} edge (5);
        \node (10) at (2,-2) [element] {$x_2$} edge (3);
        \node (11) at (4,-2) [element] {$x_3$} edge (3);
        \node (15) at (5.5,-1.5) [element] {$\mapsto$};
        \node (21) at (10,0) [operation] {$\cdot$};
        \node (22) at (9,-1) [operation] {$\rdiv$} edge (21);
        \node (23) at (11,-1) [element] {$s_2=x_2\ldiv x_3$} edge (21);
        \node (24) at (8,-2) [element] {$s_1=x_2\cdot x_3$} edge (22);
        \node (25) at (10,-2) [operation] {$\cdot$} edge (22);
        \node (26) at (9,-3) [element] {$x_1$} edge (25);
        \node (27) at (11,-3) [element] {$s_3=x_3$} edge (25);
    \end{tikzpicture}
\end{displaymath}
Let $a\,\alpha\,b$, $\bar u\,\beta\,\bar v$. Then indeed $s_i(u_1,\dots,u_n)\,\beta\,s_i(v_1,\dots,v_n)$ for every $i$, because $\beta$ is a congruence. Suppose that $t(a,u_1,\dots,u_n)\,\delta\,t(a,v_1,\dots,v_n)$, which we can restate as
\begin{displaymath}
    s(a,s_1(u_1,\dots,u_n),\dots,s_k(u_1,\dots,u_n))\,\delta\,s(a,s_1(v_1,\dots,v_n),\dots,s_k(v_1,\dots,v_n)).
\end{displaymath}
Using $\TC(s,\alpha,\beta,\delta)$, we deduce
\begin{displaymath}
    s(b,s_1(u_1,\dots,u_n),\dots,s_k(u_1,\dots,u_n))\,\delta\,s(b,s_1(v_1,\dots,v_n),\dots,s_k(v_1,\dots,v_n)),
\end{displaymath}
which means $t(b,u_1,\dots,u_n)\,\delta\,t(b,v_1,\dots,v_n)$. Hence $\TC(t,\alpha,\beta,\delta)$ holds for every term $t$ with a single occurrence of the first variable.

In the second step, consider a general term $t=t(x_1,\dots,x_{n+1})$ with $k$ occurrences of $x_1$. Define a new term, $s(y_1,\dots,y_k,x_2,\dots,x_{n+1})$, by replacing every occurrence of $x_1$ by a unique new variable $y_1$, $\dots$, $y_k$. We will use $\TC(s,\alpha,\beta,\delta)$ repeatedly, once for each of the variables $y_1$, $\dots$, $y_k$, starting with  $t(a,u_1,\dots,u_n)\,\delta\,t(a,v_1,\dots,v_n)$, i.e., with
\begin{displaymath}
    s(a,\dots,a,u_1,\dots,u_n)\,\delta\,s(a,\dots,a,v_1,\dots,v_n).
\end{displaymath}
Then
\begin{align*}
    s(b,a,\dots,a,u_1,\dots,u_n)\,&\delta\,s(b,a,\dots,a,v_1,\dots,v_n),\\
    s(b,b,a,\dots,a,u_1,\dots,u_n)\,&\delta\,s(b,b,a,\dots,a,v_1,\dots,v_n),\\
    &\vdots\\
    s(b,\dots,b,u_1,\dots,u_n)\,&\delta\,s(b,\dots,b,v_1,\dots,v_n),
\end{align*}
which translates into $t(b,u_1,\dots,u_n)\,\delta\,t(b,v_1,\dots,v_n)$, and we are through.
\end{proof}

The following lemma is more general than we need in the proof of Theorem \ref{Th:Fundamental}, but it is readily available using Lemma \ref{Lm:free}.

\begin{lemma}\label{Lm:CongrComm-generators}
Let $\V$ be a variety of loops, $\W$ a set of words that generates total inner mapping groups in $\V$, $Q\in\V$ and $\alpha$, $\beta$ congruences of $Q$. Put
\begin{displaymath}
    \delta = \Cg{ (W_{\bar u}(a), W_{\bar v}(a) );\;W\in\mathcal W,\,1\,\alpha\,a,\,\bar u\,\beta\,\bar v }.
\end{displaymath}
Then $(V_{\bar u}(a),V_{\bar v}(a))\in\delta$ for every word $V$ that is inner for $\V$ and for every $1\,\alpha\,a$, $\bar u\,\beta\,\bar v$.
\end{lemma}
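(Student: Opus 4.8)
The plan is to leverage Lemma \ref{Lm:free}, which says that any inner word $V$ can be \emph{uniformly generated} from the words in $\W$. Concretely, there exist $W^1,\dots,W^k\in\W^{\pm1}$ and terms $t_j^i$ such that
\begin{displaymath}
    V_{\bar x}=W^1_{t_1^1(\bar x),\dots}\dots W^k_{t_1^k(\bar x),\dots}
\end{displaymath}
as an identity holding in every $Q\in\V$ for every choice of parameters. The idea is that the target relation $(V_{\bar u}(a),V_{\bar v}(a))\in\delta$ should follow by ``composing'' the basic relations that define $\delta$, one factor $W^i$ at a time, reading the product from right to left and tracking how the argument $a$ is transported.

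First I would set up the induction on the number $k$ of factors. Writing $V_{\bar u}=W^1_{\bar s^1}\circ\cdots\circ W^k_{\bar s^k}$ where $\bar s^i$ abbreviates the tuple $t_j^i(\bar u)$ (and similarly $\bar s'^i=t_j^i(\bar v)$ for the $\bar v$ side), the plan is to show by descending induction that
\begin{displaymath}
    \bigl(W^{i}_{\bar s^i}\cdots W^k_{\bar s^k}(a),\; W^{i}_{\bar s'^i}\cdots W^k_{\bar s'^k}(a)\bigr)\in\delta
\end{displaymath}
for each $i$, the case $i=1$ being exactly the conclusion. The base of the induction (one factor) is handled by the definition of $\delta$ together with the observation that each $W^i$ lies in $\W^{\pm1}$: for $W^i=W\in\W$ the pair $(W_{\bar s^i}(a'),W_{\bar s'^i}(a'))$ is a generator of $\delta$ whenever $a'\,\alpha\,1$ and $\bar s^i\,\beta\,\bar s'^i$, and for $W^i=W^{-1}$ one uses that $\delta$ is symmetric, after rewriting $(W^{-1}_{\bar s}(c),W^{-1}_{\bar s'}(c))$ in terms of a generator via the substitution $c=W_{\bar s}(a')$. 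The hypotheses $\bar s^i\,\beta\,\bar s'^i$ hold because $\beta$ is a congruence and $t_j^i$ are terms applied to $\beta$-related tuples $\bar u\,\beta\,\bar v$.

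For the inductive step I would combine two moves. Letting $c=W^{i+1}_{\bar s^{i+1}}\cdots W^k_{\bar s^k}(a)$ and $c'=W^{i+1}_{\bar s'^{i+1}}\cdots W^k_{\bar s'^k}(a)$, the induction hypothesis gives $(c,c')\in\delta$. I then want $(W^i_{\bar s^i}(c),W^i_{\bar s'^i}(c'))\in\delta$. This I split as a composition through the intermediate point $W^i_{\bar s^i}(c')$: the pair $(W^i_{\bar s^i}(c),W^i_{\bar s^i}(c'))$ lies in $\delta$ because $\delta$ is a congruence and $(c,c')\in\delta$, so applying the fixed mapping $W^i_{\bar s^i}$ (a term operation) preserves $\delta$; and the pair $(W^i_{\bar s^i}(c'),W^i_{\bar s'^i}(c'))$ is a base-case generator of $\delta$ provided $c'\,\alpha\,1$. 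Transitivity of $\delta$ closes the step. The one genuine obstacle is verifying the side condition $c'\,\alpha\,1$ (and likewise that the argument fed to every factor is $\alpha$-related to $1$): this needs the fact that each $W^i$ is an \emph{inner} word, so $W^i_{\bar s}(1)=1$ is an identity in $\V$, whence every partial product $W^{i}_{\bar s'^i}\cdots W^k_{\bar s'^k}$ fixes $1$, and thus $c'=W^{i+1}_{\bar s'^{i+1}}\cdots W^k_{\bar s'^k}(a)\,\alpha\,W^{i+1}_{\bar s'^{i+1}}\cdots W^k_{\bar s'^k}(1)=1$ using $a\,\alpha\,1$ and that $\alpha$ is a congruence preserved by these term operations. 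This innerness bookkeeping, rather than any deep computation, is where care is required.
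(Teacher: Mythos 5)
Your proposal is correct and takes essentially the same route as the paper's own proof: both invoke Lemma \ref{Lm:free} to factor $V$ into words from $\W^{\pm1}$, then run an induction on the partial products, splitting each step through the intermediate point $W^i_{t^i_1(\bar u),\dots}(c')$ (first using that $\delta$ is preserved by the fixed mapping, then that the resulting pair is a generator), with innerness of the partial products supplying the side condition $c'\,\alpha\,1$. Your treatment of inverse words in the base case is exactly the paper's preliminary observation that $W^{-1}_{\bar u}(a)\,\delta\,W^{-1}_{\bar v}(a)$, obtained by substituting, invoking a generator, and applying $W^{-1}_{\bar v}$ together with symmetry.
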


\begin{proof}
We first note that $W^{-1}_{\bar u}(a)\,\delta\,W^{-1}_{\bar v}(a)$ for every $W\in\mathcal W$, $1\,\alpha\,a$ and $\bar u\,\beta\,\bar v$. Indeed, since $1=W^{-1}_{\bar u}(1)\,\alpha\,W^{-1}_{\bar u}(a)$, we get $a = W_{\bar u}(W^{-1}_{\bar u}(a))\,\delta\,W_{\bar v}(W^{-1}_{\bar u}(a))$, and thus $W^{-1}_{\bar u}(a)\,\delta\,W^{-1}_{\bar v}(a)$.

According to Lemma \ref{Lm:free}, there exist $W^1,\dots,W^k\in\W^{\pm1}$ and terms $t_i^j$ such that they uniformly generate the mappings induced by $V$.
To keep our notation simple, we will use the shorthand $W^i_{\bar t(\bar z)}$ for the mapping $W^i_{t_1^i(\bar z),\dots,t_{r_i}^i(\bar z)}$.
Notice that if $x\,\delta\,y$, then $W^i_{\bar t(\bar z)}(x)\,\delta\,W^i_{\bar t(\bar z)}(y)$ for every $\bar z\in Q$.

Fix $a\,\alpha\,1$. An easy induction shows the result: for $i=k$, we have $W^k_{\bar t(\bar u)}(a)\,\delta\,W^k_{\bar t(\bar v)}(a)$ by the definition of $\delta$, and if
$W^{i+1}_{\bar t(\bar u)}\dots W^k_{\bar t(\bar u)}(a)\,\delta\,W^{i+1}_{\bar t(\bar v)}\dots W^k_{\bar t(\bar v)}(a)$, then
$$W^{i}_{\bar t(\bar u)}W^{i+1}_{\bar t(\bar u)}\dots W^k_{\bar t(\bar u)}(a)\,\delta\,W^i_{\bar t(\bar u)}W^{i+1}_{\bar t(\bar v)}\dots W^k_{\bar t(\bar v)}(a)\,\delta\,W^i_{\bar t(\bar v)}W^{i+1}_{\bar t(\bar v)}\dots W^k_{\bar t(\bar v)}(a),$$
where the former equivalence follows from the induction assumption, and the latter one follows from the definition of $\delta$, because $W^{i+1}_{\bar t(\bar v)}\dots W^k_{\bar t(\bar v)}$ is an inner mapping, and thus $1=W^{i+1}_{\bar t(\bar v)}\dots W^k_{\bar t(\bar v)}(1)\,\alpha\,W^{i+1}_{\bar t(\bar v)}\dots W^k_{\bar t(\bar v)}(a)$.
\end{proof}

\subsection{Proof of the fundamental theorem}

We are ready to prove Theorem \ref{Th:CongrComInf}:

\begin{proof}[Proof of Theorem \ref{Th:CongrComInf}]
Let $\delta = \Cg{ (W_{\bar u}(a), W_{\bar v}(a) );\;W\in\mathcal W,\,1\,\alpha\,a,\,\bar u\,\beta\,\bar v }$ and $A=N_\alpha$.

First we show $[\alpha,\beta]\supseteq\delta$. We only need to check that $W_{\bar u}(a)\,[\alpha,\beta]\,W_{\bar v}(a)$ whenever $W$ is inner, $a\in A$ and $\bar u\,\beta\,\bar v$. Note that $W_{\bar u}(1)= 1 = W_{\bar v}(1)$ because $W$ is an inner word. In particular $W_{\bar u}(1)\,[\alpha,\beta]\,W_{\bar v}(1)$ and the term condition $C(\alpha,\beta;[\alpha,\beta])$ for $t(\bar x,y)=W_{\bar x}(y)$ gives $W_{\bar u}(a)\,[\alpha,\beta]\,W_{\bar v}(a)$.

Now we show $[\alpha,\beta]\subseteq\delta$. Recall that $[\alpha,\beta]$ is the smallest congruence with $C(\alpha,\beta;[\alpha,\beta])$. In order to show $[\alpha,\beta]\subseteq\delta$, it is sufficient to check that $C(\alpha,\beta;\delta)$. We will write $x\equiv y$ if $x\,\delta\,y$.

According to Lemma \ref{Lm:TC}, we need to check the term condition $\TC(t,\alpha,\beta,\delta)$ for every term $t$ that is slim in the first coordinate. Consider such a term $t$, $a\,\alpha\,b$, $\bar u\,\beta\,\bar v$ and assume $t(a,u_1,\dots,u_n)\equiv t(a,v_1,\dots,v_n)$. We will show that $t(b,u_1,\dots,u_n)\equiv t(b,v_1,\dots,v_n)$.

Let $d$ be the depth of the (unique) occurrence of $x_1$ in $t$. We will construct a sequence of $(n+2)$-ary terms $s_0,\dots,s_d$ and elements $a_0,\dots,a_d\in A$ and $a_0',\dots,a_d'\in A$ satisfying the following conditions for every $i=0,\dots,d$:
\begin{enumerate}
	\item $s_i$ has a single occurrence of $x_1$ at depth $d+1-i$, and it appears in a subterm of the form $x_1\cdot s$ for some term $s$,
	\item $s_i(1,x_1,\dots,x_{n+1})=t(x_1,\dots,x_{n+1})$,
	\item $s_i(a_i,b,u_1,\dots,u_n)\equiv s_i(a_i',b,v_1,\dots,v_n)$,
	\item $a_i\equiv a_i'$ and $a_i,a_i'\in A$.
\end{enumerate}

Assuming existence of the sequences, it is easy to finish the proof of the theorem: it follows from (1) and (2) that $s_d(x_1,\dots,x_{n+2})=x_1\cdot t(x_2,\dots,x_{n+2})$, and using (3),
\begin{displaymath}
    a_d\cdot t(b,u_1,\dots,u_n)=s_d(a_d,b,u_1,\dots,u_n)\equiv s_d(a_d',b,v_1,\dots,v_n)=a_d'\cdot t(b,v_1,\dots,v_n).
\end{displaymath}
According to (4), $a_d\equiv a_d'$, so we can cancel in $Q/\delta$ and get $t(b,u_1,\dots,u_n)\equiv t(b,v_1,\dots,v_n).$

Now we will construct the sequences. In the initial step, take
\begin{displaymath}
    s_0(x_1,\dots,x_{n+2})=t(x_1\cdot x_2, x_3,\dots, x_{n+2}),
\end{displaymath}
and let $a_0=a_0'=a/b$. It is readily seen that the conditions (1), (2), (4) are satisfied, and (3) follows from the assumption that $t(a,u_1,\dots,u_n)\equiv t(a,v_1,\dots,v_n)$.

Induction step: given $s_i,a_i,a_i'$ satisfying the conditions, we will construct $s_{i+1},a_{i+1},a_{i+1}'$.
Since $t$ is slim, one of the following configurations takes place near the occurrence of $x_1$ in $s_i$, for some variable $x_k$, some term $s(x_2,\dots,x_{n+2})$ and some operation $\circ\in\{\cdot,\ldiv,\rdiv\}$:
\begin{displaymath}
    \treeL{$x_1$}{$\cdot$}{$s$}{$\circ$}{$x_k$}\qquad\qquad \treeR{$x_k$}{$\circ$}{$x_1$}{$\cdot$}{$s$}
\end{displaymath}
Let $W_{x,y}=A_{x,y}^\circ$ in the former case and $W_{x,y}=B_{x,y}^\circ$ in the latter case. (Recall that $(z\cdot x)\circ y=A_{x,y}^\circ(z)\cdot(x\circ y)$ and $y\circ(z\cdot x)=B_{x,y}^\circ(z)\cdot(y\circ x)$.) Define
\begin{displaymath}
    a_{i+1}=W_{s(b,u_1,\dots,u_n),u_{k-2}}(a_i), \qquad a_{i+1}'=W_{s(b,v_1,\dots,v_n),v_{k-2}}(a_i'),
\end{displaymath}
and let $s_{i+1}$ result from $s_i$ by replacing the subterm $(x_1\cdot s)\circ x_k$ by $x_1\cdot(s\circ x_k)$, or
$x_k\circ(x_1\cdot s)$ by $x_1\cdot(x_k\circ s)$, respectively. Graphically, the configuration near the occurrence of $x_1$ in $s_{i+1}$ becomes one of the following:
\begin{displaymath}
    \treeR{$x_1$}{$\cdot$}{$s$}{$\circ$}{$x_k$}\qquad\qquad \treeR{$x_1$}{$\cdot$}{$x_k$}{$\circ$}{$s$}
\end{displaymath}
It is readily seen that the conditions (1), (2) hold for $s_{i+1}$ too.

Let us verify condition (3). Suppose that we are in the former case. Then near $x_1$ the evaluated term $s_i(a_i,b,u_1,\dots,u_n)$ gives $(a_i\cdot s(b,u_1,\dots,u_n))\circ u_{k-2}$, which is equal to $A^\circ_{s(b,u_1,\dots,u_n),u_{k-2}}(a_i)\cdot (s(b,u_1,\dots,u_n)\circ u_{k-2}) = a_{i+1}\cdot (s(b,u_1,\dots,u_n)\circ u_{k-2})$, which is how the evaluated term $s_{i+1}(a_{i+1},b,u_1,\dots,u_n)$ looks near $x_1$. In other words, $s_i(a_i,b,u_1,\dots,u_n)=s_{i+1}(a_{i+1},b,u_1,\dots,u_n)$. Similarly, $s_i(a_i',b,v_1,\dots,v_n)=s_{i+1}(a_{i+1}',b,v_1,\dots,v_n)$. Then we can use (3) for $s_i$ to conclude that
\begin{displaymath}
    s_{i+1}(a_{i+1},b,u_1,\dots,u_n) = s_i(a_i,b,u_1,\dots,u_n) \equiv s_i(a_i',b,v_1,\dots,v_n)=s_{i+1}(a_{i+1}',b,v_1,\dots,v_n),
\end{displaymath}
so that (3) holds for $s_{i+1}$. Similarly in the latter case.

To check condition (4), observe that $a_{i+1},a_{i+1}'\in A$ because the normal subloop $A$ is closed under inner mappings by Proposition \ref{Pr:TotInn}, and that
\begin{displaymath}
    a_{i+1}=W_{s(b,u_1,\dots,u_n),u_{k-2}}(a_i)\equiv W_{s(b,u_1,\dots,u_n),u_{k-2}}(a_i')\equiv W_{s(b,v_1,\dots,v_n),v_{k-2}}(a_i')=a_{i+1}',
\end{displaymath}
where the former equivalence follows from the fact that $a_i\equiv a_i'$ and $\equiv$ is a congruence, and the latter one follows from Lemma \ref{Lm:CongrComm-generators}.
\end{proof}

The following example illustrates the inductive algorithm used in the proof of Theorem \ref{Th:CongrComInf}.

\begin{example}
Let $t(x_1,x_2,x_3,x_4)=x_3\rdiv((x_1\ldiv x_4)\cdot x_2)$. The sequence $s_0,s_1,s_2,s_3$ is depicted below. Notice the occurrence of $x_1$ ``climbing the tree'' to the top level, while the structure of the rest of the tree remains intact. Also notice that $s_3(x_1,\dots,x_5)=x_1\cdot t(x_2,\dots,x_5)$.
\begin{scriptsize}
\begin{displaymath}
    \begin{tikzpicture}
    [scale = 0.75, operation/.style={circle,fill=gray!10,inner sep=0pt,minimum size=3.5mm}, element/.style={}]
        \node (1) at (0,0) [operation] {$\rdiv$};
        \node (2) at (-1,-1) [element] {$x_4$} edge (1);
        \node (3) at (1,-1) [operation] {$\cdot$} edge (1);
        \node (4) at (0,-2) [operation] {$\ldiv$} edge (3);
        \node (5) at (2,-2) [element] {$x_3$} edge (3);
        \node (6) at (-1,-3) [operation] {$\cdot$} edge (4);
        \node (7) at (1,-3) [element] {$x_5$} edge (4);
        \node (8) at (-2,-4) [element] {$x_1$} edge (6);
        \node (9) at (0,-4) [element] {$x_2$} edge (6);
        \node (11) at (5,0) [operation] {$\rdiv$};
        \node (12) at (4,-1) [element] {$x_4$} edge (11);
        \node (13) at (6,-1) [operation] {$\cdot$} edge (11);
        \node (14) at (5,-2) [operation] {$\cdot$} edge (13);
        \node (15) at (7,-2) [element] {$x_3$} edge (13);
        \node (16) at (4,-3) [element] {$x_1$} edge (14);
        \node (17) at (6,-3) [operation] {$\ldiv$} edge (14);
        \node (18) at (5,-4) [element] {$x_2$} edge (17);
        \node (19) at (7,-4) [element] {$x_5$} edge (17);
        \node (21) at (10,0) [operation] {$\rdiv$};
        \node (22) at (9,-1) [element] {$x_4$} edge (21);
        \node (23) at (11,-1) [operation] {$\cdot$} edge (21);
        \node (24) at (10,-2) [element] {$x_1$} edge (23);
        \node (25) at (12,-2) [operation] {$\cdot$} edge (23);
        \node (26) at (11,-3) [operation] {$\ldiv$} edge (25);
        \node (27) at (13,-3) [element] {$x_3$} edge (25);
        \node (28) at (10,-4) [element] {$x_2$} edge (26);
        \node (29) at (12,-4) [element] {$x_5$} edge (26);
        \node (51) at (15,0) [operation] {$\cdot$};
        \node (52) at (14,-1) [element] {$x_1$} edge (51);
        \node (53) at (16,-1) [operation] {$\rdiv$} edge (51);
        \node (54) at (15,-2) [element] {$x_4$} edge (53);
        \node (55) at (17,-2) [operation] {$\cdot$} edge (53);
        \node (56) at (16,-3) [operation] {$\ldiv$} edge (55);
        \node (57) at (18,-3) [element] {$x_3$} edge (55);
        \node (58) at (15,-4) [element] {$x_2$} edge (56);
        \node (59) at (17,-4) [element] {$x_5$} edge (56);
    \end{tikzpicture}
\end{displaymath}
\end{scriptsize}
\end{example}

\subsection{The fundamental theorem in loops with a finiteness condition}

Assume that $Q$ has a bound on the order of all left and right translations, i.e., there is an integer $n>0$ such that $L_x^n=R_x^n=1$ for every $x\in Q$. (This is certainly true in every finite loop.) Then the theory developed so far simplifies considerably, because we can avoid the division operations: we have
\begin{displaymath}
    x\ldiv y=L_x^{-1}(y)=L_x^{n-1}(y)=\underbrace{x(\dots(x(x}_{n-1}y))\dots),
\end{displaymath}
and dually for the right division. Hence, for every loop term $t$, there is a \emph{multiplicative} term $s$ (i.e., a term in the language of multiplication) such that $t=s$ is a valid identity in every loop where $L_x^n=R_x^n=1$ for every $x$. So, in the term condition $\TC(t,\alpha,\beta,\delta)$ verified in the proof of Theorem \ref{Th:CongrComInf}, we only need to consider \emph{multiplicative slim terms}. Hence, later in the proof, we only need to use the words $A_{x,y}^\cdot$ and $B_{x,y}^\cdot$, which always induce mappings from $\inn Q$ (while the other options may induce mappings from $\totinn Q$). It means that we only need a weaker version of Lemma \ref{Lm:CongrComm-generators}, for inner words from $\inn Q$, and thus we can choose a weaker $\W$, a set of words that generates inner mapping groups (not necessarily total inner mapping groups) in $\V$. We have proved:

\begin{theorem}\label{Th:FundamentalFinite}
Let $\V$ be a variety of loops and $\mathcal W$ a set of words that generates inner mapping groups in $\V$. If for $Q\in\V$ there is $n>0$ such that $L_x^n=R_x^n=1$ for every $x\in Q$, then
\begin{displaymath}
    [\alpha,\beta] = \Cg{(W_{\bar u}(a), W_{\bar v}(a) );\;W\in\mathcal W,\,1\,\alpha\,a,\,\bar u\,\beta\,\bar v}
\end{displaymath}
for any congruences $\alpha$, $\beta$ of $Q$.
\end{theorem}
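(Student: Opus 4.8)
The plan is to reuse essentially verbatim the architecture of the proof of Theorem~\ref{Th:CongrComInf}, exploiting that the finiteness hypothesis renders the two divisions superfluous and thereby collapses the role of $\totinn Q$ down to $\inn Q$. The first step is the observation already recorded above: since $L_x^n=R_x^n=1$, we may write $x\ldiv y=L_x^{n-1}(y)$ and $y\rdiv x=R_x^{n-1}(y)$, so every loop term $t$ is equal, as an operation on $Q$, to a \emph{multiplicative} term built from $\cdot$ alone. As the term condition depends only on the induced operation and not on the syntactic shape of the term, $\TC(t,\alpha,\beta,\delta)$ is equivalent to $\TC(s,\alpha,\beta,\delta)$ whenever $t=s$ is a valid identity; hence $C(\alpha,\beta;\delta)$ is equivalent to the validity of the term condition for \emph{multiplicative} terms only.

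The inclusion $[\alpha,\beta]\supseteq\delta$ requires no change. Every $W\in\W$ is an inner word, so $W_{\bar u}(1)=1=W_{\bar v}(1)$, and applying the term condition for $C(\alpha,\beta;[\alpha,\beta])$ to $t(\bar x,y)=W_{\bar x}(y)$ yields $W_{\bar u}(a)\,[\alpha,\beta]\,W_{\bar v}(a)$. This argument never mentions total inner mappings, so it survives intact when $\W$ merely generates inner mapping groups.

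For the reverse inclusion I would rerun the slim-term induction of Theorem~\ref{Th:CongrComInf}, but only for multiplicative slim terms. First I would verify that the reduction in Lemma~\ref{Lm:TC} preserves multiplicativity: collapsing the maximal $x_1$-free subterms to fresh variables retains exactly the operations lying on the branch from $x_1$ to the root, and splitting repeated occurrences of $x_1$ introduces no new operations, so a multiplicative term yields a multiplicative slim term. It therefore suffices to test $\TC(t,\alpha,\beta,\delta)$ for multiplicative slim $t$. Running the ``climbing the tree'' induction on such a $t$, every node along the branch carrying $x_1$ is a product node, so the operation $\circ$ is always $\cdot$ and the only words ever invoked are $A_{x,y}^\cdot$ and $B_{x,y}^\cdot$, which by Proposition~\ref{Pr:InnGens} induce mappings in $\inn Q$ rather than merely in $\totinn Q$.

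This last point is exactly where the weaker hypothesis pays off, and it is the step I expect to demand the most care. Condition~(4) of the induction needs the $\inn Q$-analogue of Lemma~\ref{Lm:CongrComm-generators}: for $\delta$ generated from a $\W$ that generates inner mapping groups, one has $(V_{\bar u}(a),V_{\bar v}(a))\in\delta$ for every word $V$ inducing mappings in $\inn Q$. This analogue is proved word for word, replacing the appeal to the total case of Lemma~\ref{Lm:free} by its inner-mapping-group clause; applied to $A_{x,y}^\cdot$ and $B_{x,y}^\cdot$ it closes the induction. The genuine obstacle is thus bookkeeping rather than a new idea: one must be sure that no division silently reappears among the terms actually tested, so that $\totinn Q$ is never needed and a $\W$ generating only inner mapping groups really does suffice.
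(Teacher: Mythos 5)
Your proposal is correct and follows essentially the same route as the paper's proof: use $L_x^n=R_x^n=1$ to replace both divisions by iterated multiplication, reduce the term condition to multiplicative slim terms, observe that the climbing induction then only ever invokes $A_{x,y}^\cdot$ and $B_{x,y}^\cdot$ (which lie in $\inn Q$), and close condition~(4) with the inner-mapping-group version of Lemma~\ref{Lm:CongrComm-generators} via Lemma~\ref{Lm:free}. The only difference is that you explicitly verify that the reduction in Lemma~\ref{Lm:TC} preserves multiplicativity, a point the paper leaves implicit.
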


\section{Pruning the generating sets}\label{Ss:Pruning}

Assume the notation of Theorem \ref{Th:Fundamental}, that is, let $\V$ be a variety of loops and $\W$ a set of words that generates inner mapping groups in $\V$. For $Q\in\V$ and congruences $\alpha$, $\beta$ of $Q$ call $\Cg{(W_{\bar u}(a), W_{\bar v}(a) );\;W\in\mathcal W,\,1\,\alpha\,a,\,\bar u\,\beta\,\bar v}$ the \emph{congruence induced by} $\W$. In some cases, a word can be removed from $\W$ with no effect on the congruences induced by $\W$. Formally, we say that a word $W$ is \emph{removable} from a set $\W$, if for every loop $Q\in\V$ and every congruences $\alpha$, $\beta$ of $Q$ the congruence induced by $\W$ and the congruence induced by $\W\smallsetminus\{W\}$ are equal.

\begin{proposition}\label{Pr:Pruning}
Let $\V$ be a variety of loops, $\W$ be a set of inner words, $W\in\W$, and $\equiv$ the congruence induced by $\W\smallsetminus\{W\}$.
\begin{enumerate}
	\item[(i)] If $W$ has no parameters, it is removable from $\W$.
	\item[(ii)] The word $W=U_x$ is removable from $\W$, provided that $[a,x,b]\equiv 1$
	for every loop $Q\in\V$, every congruences $\alpha$, $\beta$ of $Q$, and every $1\,\alpha\, a$, $1\,\beta\, b$, $x\in Q$.
	\item[(iii)] The word $W=T_x$ is removable from $\W$, provided that $[a,b]\equiv1$, $[a,b,x]\equiv1$, $[b,a,x]\equiv1$ and $[x,b,a]\equiv1$
	for every loop $Q\in\V$, every congruences $\alpha$, $\beta$ of $Q$, and every $1\,\alpha\, a$, $1\,\beta\, b$, $x\in Q$.
\end{enumerate}
\end{proposition}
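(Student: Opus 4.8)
The plan is to reduce all three parts to a single statement and then treat them in increasing order of difficulty. Since enlarging the set of generators can only enlarge the generated congruence, and $\W\smallsetminus\{W\}\subseteq\W$, the congruence $\equiv$ induced by $\W\smallsetminus\{W\}$ is always contained in the congruence induced by $\W$. Thus $W$ is removable if and only if the reverse containment holds, and for that it suffices to absorb the generators contributed by $W$ into $\equiv$, that is, to prove
\[
    W_{\bar u}(a)\equiv W_{\bar v}(a)\qquad\text{for all }1\,\alpha\,a,\ \bar u\,\beta\,\bar v.
\]
Part (i) is then immediate: a parameterless word satisfies $W_{\bar u}=W_{\bar v}=W$ for all $\bar u,\bar v$, so every contributed pair has the form $(W(a),W(a))$ and lies on the diagonal $0_Q\subseteq{\equiv}$.

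For part (ii) I would argue directly. Fix $1\,\alpha\,a$, $u\,\beta\,v$ and set $c=U_u(a)$, $d=U_v(a)$. Because $U_x$ is the unary term $z\mapsto(z\ldiv x)\rdiv x$ and fixes $1$, both $c$ and $d$ are $\alpha$-related to $1$. Writing $b=u\ldiv v$, so that $v=ub$ with $1\,\beta\,b$, the defining identities read $a(cu)=u$ and $a(dv)=v$. Multiplying the first on the right by $b$ and re-associating twice, each time pushing the $\beta$-small factor $b$ inward past an $\alpha$-small head by an instance of $[a,x,b]\equiv1$ (first as $[a,cu,b]$, then as $[c,u,b]$), gives
\[
    v=(a(cu))b\equiv a((cu)b)\equiv a(c(ub))=a(cv).
\]
Comparing with $a(dv)=v$ yields $a(dv)\equiv a(cv)$; cancelling $a$ on the left and $v$ on the right (both are term operations and hence preserve $\equiv$) gives $d\equiv c$. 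This is exactly where the hypothesis $[a,x,b]\equiv1$ is consumed.

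Part (iii) follows the same template, guided by the group shadow in which $T_x$ is conjugation, $T_{bu}=T_bT_u$, and $T_b$ fixes $\alpha$-small elements modulo the commutator $[a,b]$. I would again set $c=T_u(a)$, $d=T_v(a)$ (both $\alpha$-small, since $T_x\colon z\mapsto(xz)\rdiv x$ fixes $1$), use the two factorizations $v=ub=b'u$ with $b=u\ldiv v$ and $b'=v\rdiv u$ both $\beta$-small, and transport the defining identity $dv=va$ toward $cv$. Starting from $cv=c(b'u)$, the conditions $[a,b,x]\equiv1$ and $[b,a,x]\equiv1$ together with $[a,b]\equiv1$ re-associate and commute it to $b'(cu)=b'(ua)$; starting from $dv=(ub)a$, the condition $[x,b,a]\equiv1$ together with $[a,b]\equiv1$ brings it to $u(ab)$. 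The three associator hypotheses are tailored to precisely the three positions the arbitrary parameter can occupy relative to the two small factors $a$ and $b$, while the commutator handles their order.

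The step I expect to be the main obstacle is exactly the reconciliation of the two normal forms $b'(ua)$ and $u(ab)$ in (iii). A one-sided transport inevitably produces an associator whose pattern has the arbitrary factor trapped strictly between the two small factors (such as $[b,x,a]$), a shape not among the three hypotheses and removable by no single re-association; closing the gap forces one to alternate between the two factorizations $b'u=ub$ of $v$, interleaving the commutator so that every parenthesis shift lands in one of the three permitted shapes. Verifying that such a routing exists and terminates in $dv\equiv cv$ --- and hence, after cancelling $v$, in $d\equiv c$ --- is the delicate heart of the argument and the place where all three associator identities, rather than any single one, are genuinely needed; should the three displayed identities prove insufficient on their own, the residual discrepancy must be absorbed using the generator pairs that the remaining words of $\W\smallsetminus\{T_x\}$ already contribute to $\equiv$.
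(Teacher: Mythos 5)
Parts (i) and (ii) of your proposal are correct. Your general reduction (a word is removable iff the pairs it contributes already lie in $\equiv$) is exactly the paper's framing, and your treatment of (ii) --- transporting the $\beta$-small factor $b=u\ldiv v$ past the $\alpha$-small heads $a$ and $c=U_u(a)$ by two instances of $[a,x,b]\equiv 1$, then cancelling --- is a complete proof. The paper does the same thing in mapping form: it shows $U_uU_v^{-1}(a)\equiv a$ and then substitutes $U_v(a)$ for $a$; the two arguments use the same ingredients and differ only in bookkeeping.

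Part (iii), however, has a genuine gap, and you say so yourself: after the two (valid) chains $cv\equiv b'(ua)$ and $dv\equiv u(ab)$ you leave the reconciliation step open and hedge about whether the hypotheses suffice. The idea you are missing is the paper's composition trick: instead of comparing $T_u(a)$ with $T_v(a)$ head-on --- which is what forces two different $\beta$-small elements $b=u\ldiv v$ and $b'=v/u$ into the picture and creates the ``trapped'' associator pattern --- one proves $T_uT_v^{-1}(a)\equiv a$ for every $\alpha$-small $a$ and then substitutes $T_v(a)$ (again $\alpha$-small) for $a$. Composing keeps a single $\beta$-small parameter $e=u/v$ in play: with $z=T_v^{-1}(a)=v\ldiv(av)$, which is $\alpha$-small and satisfies $vz=av$, one gets
\begin{displaymath}
    uz=(ev)z\equiv e(vz)=e(av)\equiv(ea)v\equiv(ae)v\equiv a(ev)=au,
\end{displaymath}
and dividing by $u$ yields $T_uT_v^{-1}(a)=(uz)/u\equiv(au)/u=a$. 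The four steps use, in order, the patterns $[b,x,a]$, $[b,a,x]$, $[a,b]$, $[a,b,x]$. Note that the first of these is exactly the shape you flagged as ``not among the three hypotheses'': your instinct was sound, since the paper's own proof invokes precisely this instance (written there as $[u/v,v,L_v^{-1}R_v(a)]\equiv 1$) even though the proposition's hypothesis list reads $[x,b,a]$; the list is evidently a misprint for $[b,x,a]$. Once that pattern is granted, your own computation also closes in a single step, with no alternating routing needed: $cv\equiv b'(ua)\equiv(b'u)a=va=dv$, and cancelling $v$ gives $T_u(a)\equiv T_v(a)$. Without it, neither your route nor the paper's four-step chain goes through, so the residual discrepancy you describe cannot be absorbed by cleverness alone.
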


\begin{proof}
(i) The diagonal element $(W(a),W(a))$ belongs to every congruence.

(ii) The assumption $[a,x,b]\equiv 1$ can be written as
\begin{equation}\label{Eq:Aux1}
    ax\cdot b \equiv a\cdot xb,\quad\text{ i.e., }\quad R_bR_x(a) \equiv R_{xb}(a).
\end{equation}
Upon replacing $x$ with $a\ldiv x$, and dividing both sides by $a$, we get
\begin{equation}\label{Eq:Aux2}
    a\ldiv (xb) \equiv (a\ldiv x)b,\quad\text{ i.e., }\quad M_{xb}(a) \equiv R_bM_x(a)
\end{equation}
for every $1\,\alpha\, a$, $1\,\beta\, b$, $x\in Q$.
Suppose that $1\,\alpha\,a$ and $u\,\beta\,v$, thus $1=M_v^{-1}R_v(1)\,\alpha\, M_v^{-1}R_v(a)$ and $(v\ldiv u)\,\beta\,1$. Then
\begin{align*}
    U_uU_v^{-1}(a) &= R_u^{-1}M_uM_v^{-1}R_v(a)=R_u^{-1}M_{v(v\ldiv u)}M_v^{-1}R_v(a)&& \\
    &\equiv R_u^{-1}R_{v\ldiv u}M_vM_v^{-1}R_v(a)= R_u^{-1}R_{v\ldiv u}R_v(a) &&\text{by \eqref{Eq:Aux2} with $M_v^{-1}R_v(a)\,\alpha\,1$}\\
    &\equiv R_u^{-1}R_{v(v\ldiv u)}(a)= R_u^{-1}R_u(a)=a &&\text{by \eqref{Eq:Aux1}}.
\end{align*}
Upon replacing $a$ with $U_v(a)$, we obtain $U_u(a)\equiv U_v(a)$.

(iii) The assumptions can be written as
\begin{align*}
ab\equiv ba,&\quad \text{ i.e., }\quad L_b(a)\equiv R_b(a),\\
a\cdot bx\equiv ab\cdot x,&\quad \text{ i.e., }\quad R_{bx}(a) \equiv R_xR_b(a),\\
b\cdot ax\equiv ba\cdot x,&\quad \text{ i.e., }\quad L_bR_{x}(a) \equiv R_xL_b(a),\\
x\cdot ba\equiv xb\cdot a,&\quad \text{ i.e., }\quad L_xL_b(a) \equiv L_{xb}(a),
\end{align*}
for every $1\,\alpha\, a$, $1\,\beta\, b$, $x\in Q$.
Suppose that $1\,\alpha\,a$ and $u\,\beta\,v$, thus $1=L_v^{-1}R_v(1)\,\alpha\, L_v^{-1}R_v(a)$ and $(u/v)\,\beta\,1$. Then
\begin{align*}
    T_uT_v^{-1}(a) &= R_u^{-1}L_uL_v^{-1}R_v(a)=R_u^{-1}L_{(u/v)v}L_v^{-1}R_v(a)&&\\
    &\equiv R_u^{-1}L_{u/v}L_vL_v^{-1}R_v(a)=R_u^{-1}L_{u/v}R_v(a)&&\text{ by $[u/v,v,L_v^{-1}R_v(a)]\equiv 1$}\\
    &\equiv R_u^{-1}R_vL_{u/v}(a)&&\text{ by $[u/v,a,v]\equiv 1$}\\
    &\equiv R_u^{-1}R_vR_{u/v}(a)&&\text{ by $[a,u/v]\equiv 1$}\\
    &\equiv R_u^{-1}R_{(u/v)v}(a)=R_u^{-1}R_u(a)=a &&\text{ by $[a,u/v,v]\equiv 1$}.
\end{align*}
Upon replacing $a$ with $T_v(a)$, we obtain $T_u(a)\equiv T_v(a)$.
\end{proof}

Item (i) of Proposition \ref{Pr:Pruning} applies, for example, to the inverse mapping $J$ in inverse property loops, or more generally, to the mapping $M_1$ in every loop. The assumptions of (ii) are satisfied, for example, if $R_{x,y}\in\W$; then $R_{b,x}(a)\equiv R_{1,x}(a)=1$ whenever $1\,\alpha\, a$, $1\,\beta\, b$ and $x\in Q$, and thus $[a,x,b]\equiv 1$.
Using (iii) is less straightforward; we will do so in Corollary \ref{Cr:FundamentalS}.

The full power of Theorems \ref{Th:CongrComInf} and \ref{Th:FundamentalFinite} is realized only in combination with the results of Section \ref{Sc:TotMlt}, as summarized in Example \ref{Ex:GeneratingSets}.

\begin{corollary}\label{Cr:CongrCom}
Let $Q$ be a loop and $\alpha$, $\beta$ congruences of $Q$. Let $\mathcal W$ be defined as follows:
\begin{enumerate}
\item[(i)] If $Q$ is a loop, let $\mathcal W = \{L_{x,y},R_{x,y},T_x,M_{x,y}\}$ or $\mathcal W=\{A_{x,y}^\cdot,B_{x,y}^\cdot,A_{x,y}^\ldiv\}$.
\item[(ii)] If $Q$ is an inverse property loop, let $\mathcal W = \{L_{x,y},T_x\}$ or $\mathcal W = \{M_{x,y}\}$.
\item[(iii)] If $Q$ is a group, let $\mathcal W = \{T_x\}$.
\item[(iv)] If $Q$ is a commutative loop, let $\mathcal W = \{L_{x,y},M_{x,y}\}$ or $\mathcal W=\{A_{x,y}^\cdot,A_{x,y}^\ldiv\}$.
\end{enumerate}
Then
\begin{displaymath}
    [\alpha,\beta] = \Cg{ (W_{\bar u}(a), W_{\bar v}(a) );\;W\in\mathcal W,\,1\,\alpha\, a,\,\bar{u}\,\beta\,\bar{v} }
\end{displaymath}
for any congruences $\alpha$, $\beta$ of $Q$.
\end{corollary}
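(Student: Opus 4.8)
The plan is to read each listed $\mathcal W$ as an instance of Theorem \ref{Th:CongrComInf}: in every case I exhibit a set of words $\mathcal W_0\supseteq\mathcal W$ that \emph{provably} generates total inner mapping groups in the relevant variety (taken from Example \ref{Ex:GeneratingSets} or Proposition \ref{Pr:TotIP}), apply Theorem \ref{Th:CongrComInf} to $\mathcal W_0$, and then delete the extra words using Proposition \ref{Pr:Pruning}. Three of the listed sets need no deletion at all: by Example \ref{Ex:GeneratingSets}, the sets $\{A_{x,y}^\cdot,B_{x,y}^\cdot,A_{x,y}^\ldiv\}$ (all loops) and $\{A_{x,y}^\cdot,A_{x,y}^\ldiv\}$ (commutative loops) generate total inner mapping groups, and by Proposition \ref{Pr:TotIP}(ii) so does $\{M_{x,y}\}$ (inverse property loops). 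For these, Theorem \ref{Th:CongrComInf} applies verbatim and there is nothing further to prove.

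For the remaining four sets the template is always the same: Theorem \ref{Th:CongrComInf} gives $[\alpha,\beta]$ as the congruence induced by $\mathcal W_0$; Proposition \ref{Pr:Pruning} shows the extra word is removable, so that the congruence induced by $\mathcal W_0$ coincides with the congruence induced by $\mathcal W$; combining the two yields the displayed formula for $\mathcal W$. I would first handle the two $J$-deletions. For the group set $\{T_x\}$ I take $\mathcal W_0=\{T_x,J\}$, which generates total inner mapping groups in the variety of groups by Example \ref{Ex:GeneratingSets}, and delete $J=M_1$ by Proposition \ref{Pr:Pruning}(i), since $J$ carries no parameters. For the inverse property set $\{L_{x,y},T_x\}$ I take $\mathcal W_0=\{L_{x,y},T_x,J\}$ (total inner by Proposition \ref{Pr:TotIP}(ii)) and again delete $J$ by Proposition \ref{Pr:Pruning}(i).

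Next come the two $U_x$-deletions, which invoke Proposition \ref{Pr:Pruning}(ii). For the all-loops set $\{L_{x,y},R_{x,y},T_x,M_{x,y}\}$ I take $\mathcal W_0$ to be this set together with $U_x$ (total inner by Example \ref{Ex:GeneratingSets}). Writing $\equiv$ for the congruence induced by the pruned set, the point is that $R_{x,y}$ survives pruning, so the pair $(R_{b,x}(a),R_{1,x}(a))=(R_{b,x}(a),a)$ is a generator of $\equiv$ whenever $1\,\alpha\,a$ and $1\,\beta\,b$; hence $[a,x,b]\equiv1$, which is exactly the hypothesis of Proposition \ref{Pr:Pruning}(ii) and makes $U_x$ removable, as noted in the remark following that proposition. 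The commutative set $\{L_{x,y},M_{x,y}\}$ is treated identically once one observes that in a commutative loop $L_x=R_x$ and $xy=yx$, so that $R_{x,y}=R_{yx}^{-1}R_xR_y=L_{xy}^{-1}L_xL_y=L_{x,y}$; thus the surviving word $L_{x,y}$ already plays the role of $R_{x,y}$, the condition $[a,x,b]\equiv1$ holds as before, and $U_x$ is removable from $\{L_{x,y},M_{x,y},U_x\}$.

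The only real pitfall—conceptual rather than computational—is the temptation to apply Theorem \ref{Th:CongrComInf} directly to the pruned sets. This is not legitimate, because the pruned sets need \emph{not} generate total inner mapping groups: Example \ref{Ex:TotInnGens} exhibits loops in which dropping $U_x$ (or $T_x$) destroys generation of $\totinn Q$. The validity of the formula for the smaller sets rests entirely on the fact that pruning preserves the \emph{induced congruence} (Proposition \ref{Pr:Pruning}), not on the smaller sets generating $\totinn Q$. Keeping the two-step argument—Theorem \ref{Th:CongrComInf} on $\mathcal W_0$, followed by Proposition \ref{Pr:Pruning}—cleanly separated is therefore the crux of the proof.
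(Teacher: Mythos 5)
Your proposal is correct and is exactly the paper's argument: the paper's proof is the one-line instruction to apply Theorem \ref{Th:CongrComInf} to the generating sets of Example \ref{Ex:GeneratingSets} (and Proposition \ref{Pr:TotIP}) and then prune via Proposition \ref{Pr:Pruning}, which is precisely your two-step template, with the $J$-deletions handled by Proposition \ref{Pr:Pruning}(i) and the $U_x$-deletions by Proposition \ref{Pr:Pruning}(ii) using the surviving word $R_{x,y}$ (equal to $L_{x,y}$ in the commutative case). Your closing remark---that the pruned sets need not generate $\totinn Q$, so the formula for them rests on preservation of the induced congruence rather than on Theorem \ref{Th:CongrComInf} applied directly---is a correct and worthwhile clarification of why the two steps must stay separate.
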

\begin{proof}
Apply Theorem \ref{Th:Fundamental} to the generating sets from Example \ref{Ex:GeneratingSets}, and use the pruning principles of Proposition \ref{Pr:Pruning}.
\end{proof}

It follows from Theorem \ref{Th:FundamentalFinite} that for loops with a finiteness condition we can remove $M_{x,y}$ and $A_{x,y}^\ldiv$ from the sets $\W$ in Corollary \ref{Cr:CongrCom} (i,iv).

\begin{problem}
Find additional small generating sets for Corollary \ref{Cr:CongrCom}.
\end{problem}

Proposition \ref{Pr:Pruning} is an ad hoc argument designed specifically to prune the standard generating sets. We therefore ask:

\begin{problem}
Describe systematically when a word is removable from a set of inner words.
\end{problem}

Given a set of words $\W$, we now show a simple trick that decreases the size of the generating set of the commutator, useful for computational purposes. Suppose that some two-parameter inner word $W_{x,y}$ is used in $\mathcal W$. We claim that
\begin{align*}
    \Cg{ (W_{u_1,u_2}(a),&W_{v_1,v_2}(a));\;1\,\alpha\, a,\,\bar u\,\beta\,\bar v } \\
    &=\Cg{ (W_{u,c}(a),W_{v,c}(a)),\,(W_{c,u}(a),W_{c,v}(a));\;1\,\alpha\, a,\,u\,\beta\, v,\,c\in Q }.
\end{align*}
Indeed, choosing $u_2=v_2=c$ (so surely $u_2\,\beta\, v_2$) or $u_1=v_1=c$ shows that the second congruence is a subset of the first. Conversely, if the generators of the second congruence are available then $W_{u_1,u_2}(a)$ is congruent to $W_{u_1,v_2}(a)$, which is in turn congruent to $W_{v_1,v_2}(a)$. Of course, a similar observation holds for inner terms with an arbitrary number of parameters.
Note that the second generating set is generally smaller than the first, but it is more cumbersome to write down.

\section{Elementwise commutators and associators}\label{Sc:AC}

\subsection{Commutators and associators as inner mappings}

We have seen that the words $A_{x,y}^\circ$, $B_{x,y}^\circ$ form a set that generates total inner mapping groups in all loops, and thus can be used to generate congruence commutators. Can these words be replaced by terms that resemble elementwise commutators and associators?

The standard commutator $[x,y]$ defined by $xy = (yx)[x,y]$ will not work, since the terms $t_x(y) = [x,y] = s_y(x)$ do not preserve normality in the variety of loops, in the sense that there exists a loop $Q$ with normal subloop $N$ and $x,y\in Q$, $a\in N$ such that neither of $[x,a]$, $[a,y]$ is in $N$.
The standard associator $[x,y,z]$ is similarly flawed.

With Leong's associator $xy\cdot z = a^L(x,y,z)x\cdot yz$, we can write $A^L_{y,z}(x) = a^L(x,y,z)x$, so that $xy\cdot z = A^L_{y,z}(x)\cdot yz$, and we notice that $A^L_{y,z} = R_{yz}^{-1}R_zR_y$ is an inner mapping. (This is the most important feature of the associator $a^L(x,y,z)$, which seems to have gone unnoticed in \cite{L}.) Consequently, $a^L(N,Q,Q)\subseteq N$ for every $N\unlhd Q$.

We will now define commutators and associators systematically. Let $\V$ be the variety of all loops. A loop term $a(x,y,z)$ is an \emph{associator} if we have $*$, $\circ$, $\circledast$, $\circledcirc\in\{\cdot,\ldiv,\rdiv\}$ such that the following hold:
\begin{enumerate}
    \item[(a)] $(x*y)\circ z = (a(x,y,z)x)\circledast (y\circledcirc z)$ in $\V$ or $z*(y\circ x) = (z\circledast y)\circledcirc (xa(x,y,z))$ in $\V$,
    \item[(b)] $a(1,y,z)=1$ in $\V$ (thus the associators give rise to inner mappings),
    \item[(c)] $a(x,y,z)=1$ on all abelian groups in $\V$.
\end{enumerate}
A loop term $c(x,y)$ is a \emph{commutator} if we have $*$, $\circledast\in\{\cdot,\ldiv,\rdiv\}$ such that the following hold:
\begin{enumerate}
    \item[(a)] $x*y = y\circledast xc(x,y)$ in $\V$ or $y*x = c(x,y)x\circledast y$ in $\V$,
    \item[(b)] $c(1,y)=1$ in $\V$ (thus the commutators give rise to inner mappings),
    \item[(c)] $c(x,y)=1$ on all abelian groups in $\V$.
\end{enumerate}
For instance, given the loop operations $\ast = \cdot$, $\circ = \rdiv$, can we find loop operations $\circledast$, $\circledcirc$ such that
$a^{\cdot\rdiv}(x,y,z)$ defined by $(x\cdot y)\rdiv z = (a^{\cdot\rdiv}(x,y,z)x)\circledast (y\circledcirc z)$ fulfills (b) and (c)? We can equivalently study $A_{y,z}^{\cdot\rdiv}(x)$ defined by $A_{y,z}^{\cdot\rdiv}(x) = a^{\cdot\rdiv}(x,y,z)x$. To make the identity $(x\cdot y)\rdiv z = x\circledast (y\circledcirc z)$ valid in abelian groups, we must choose $(x\cdot y)\rdiv z = x\cdot (y\rdiv z)$ or $(x\cdot y)\rdiv z = x\rdiv (y\ldiv z)$. But only the first choice gives a valid loop identity when $x=1$. Equivalently, only the first choice gives rise to an inner mapping $A_{y,z}^{\cdot\rdiv}$, namely to $A_{y,z}^{\cdot\rdiv} = R_{y\rdiv z}^{-1}R_z^{-1}R_y$. Note that the answer is therefore unique here.

The following lemma can be proved by similar arguments. We omit the straightforward proof.

\begin{lemma}\label{Lm:ACUnique}\
\begin{enumerate}
\item[(i)] Let $\ast$, $\circ$ be loop operations such that $\ast\ne\rdiv$. Then
\begin{itemize}
	\item there are uniquely determined loop operations $\circledast$, $\circledcirc$ such that $(x\ast y)\circ z = x\circledast (y\circledcirc z)$ is an identity in all abelian groups, and such that $(1\ast y)\circ z = 1\circledast (y\circledcirc z)$ is an identity in all loops;
	\item there is a uniquely determined loop operation $\circledast$ such that $x\ast y = y\circledast x$ is an identity in all abelian groups, and such that $1\ast y = y\circledast 1$ is an identity in all loops.
\end{itemize}
\item[(ii)] Let $\ast$, $\circ$ be loop operations such that $\circ\ne\ldiv$. Then
\begin{itemize}
	\item there are uniquely determined loop operations $\circledast$, $\circledcirc$ such that $z \ast (y \circ x) = z \circledast (y\circledcirc x)$ is an identity in all abelian groups, and such that
    $z \ast (y \circ 1) = z \circledast (y\circledcirc 1)$ is an identity in all loops;
	\item there is a uniquely determined loop operation $\circledast$ such that $y\circ x = x\circledast y$ is an identity in all abelian groups, and such that $y\circ 1 = 1\circledast y$ is an identity in all loops.
\end{itemize}
\end{enumerate}
\end{lemma}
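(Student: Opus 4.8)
The plan is to reduce each of the four assertions to a short finite verification, running over the few admissible choices of the operations on the right-hand side, and to split the work into two independent computations: an \emph{exponent count} carried out in an arbitrary abelian group, and an \emph{evaluation at the identity} carried out in an arbitrary loop. Throughout I would use that in an abelian group, written additively, the three operations become $x\cdot y = x+y$, $x\ldiv y = y-x$ and $x\rdiv y = x-y$, so that any loop term in $x,y,z$ collapses to a signed sum $\pm x\pm y\pm z$; two such terms agree on all abelian groups iff their three exponents agree. I would also use the elementary facts that $1\ast y = y$ precisely when $\ast\in\{\cdot,\ldiv\}$, that $y\circ 1 = y$ precisely when $\circ\in\{\cdot,\rdiv\}$, and dually that $1\circledast w = w$ iff $\circledast\in\{\cdot,\ldiv\}$ while $w\circledast 1 = w$ iff $\circledast\in\{\cdot,\rdiv\}$.

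For the first bullet of (i), the hypothesis $\ast\neq\rdiv$ gives $1\ast y = y$, so the loop identity reads $y\circ z = 1\circledast(y\circledcirc z)$. If $\circledast\in\{\cdot,\ldiv\}$ this becomes $y\circ z = y\circledcirc z$, forcing $\circledcirc = \circ$; the remaining operation $\circledast$ is then pinned down uniquely in $\{\cdot,\ldiv\}$ by matching the exponent of $x$ on the two sides of the abelian-group identity $(x\ast y)\circ z = x\circledast(y\circ z)$, and one checks that the $y$- and $z$-exponents then match automatically, so both identities genuinely hold. The only alternative, $\circledast=\rdiv$, forces $y\circ z = 1\rdiv(y\circledcirc z)$, which in a cyclic group reads $y\circ z = (y\circledcirc z)^{-1}$; comparing exponents shows this fails for every $\circledcirc$, eliminating the case and establishing uniqueness. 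The second bullet is handled the same way: $1\ast y = y$ forces $\circledast\in\{\cdot,\rdiv\}$ (since $y\ldiv 1$ is the inverse of $y$ and already differs from $y$ in $\Z_3$), and the $x$-exponent in the abelian-group identity $x\ast y = y\circledast x$ then selects $\circledast$ uniquely.

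Part (ii) is the mirror image of part (i): passing to the opposite loop, which interchanges $L$ with $R$ and $\ldiv$ with $\rdiv$, turns the expressions $z\ast(y\circ x)$ and the constraint $\circ\neq\ldiv$ into those of part (i), so the two bullets of (ii) follow from those of (i) by symmetry; alternatively one repeats the argument above verbatim, now using $y\circ 1 = y$ (valid because $\circ\neq\ldiv$) in place of $1\ast y = y$.

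I expect the only genuinely delicate point to be the elimination of the ``wrong'' division operation in the uniqueness arguments (the case $\circledast=\rdiv$ in the first bullet, and $\circledast=\ldiv$ in the second). This is exactly where the abelian-group identity under-determines the answer, because $\ldiv$, $\rdiv$ and the inversion map all become visible only through signs in an abelian group, so one must genuinely invoke a loop in which left and right inverses differ from the identity map; a three-element cyclic group already suffices, and recording this once disposes of all the residual cases.
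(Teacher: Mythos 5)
Your overall scheme (existence plus a finite uniqueness check, split into an abelian-group exponent count and an evaluation at the identity) is exactly the intended method, and your treatment of the case $\circledast\in\{\cdot,\ldiv\}$ and of both commutator bullets is sound. The gap is in your elimination of the residual case $\circledast=\rdiv$ in the first bullet of (i). You claim that ``comparing exponents shows this fails for every $\circledcirc$'' in a cyclic group; this is false for exactly the two pairs $(\ast,\circ)$ where the lemma has real content. For $(\ast,\circ)=(\cdot,\rdiv)$, the candidate $(\circledast,\circledcirc)=(\rdiv,\ldiv)$ gives $x\rdiv(y\ldiv z)$, which in any abelian group equals $x+y-z=(x\cdot y)\rdiv z$, and its evaluation at $x=1$, namely $y\rdiv z=1\rdiv(y\ldiv z)$, \emph{also} holds in every abelian group ($y-z$ on both sides). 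This is precisely the second candidate $(x\cdot y)\rdiv z = x\rdiv(y\ldiv z)$ exhibited in the paper's worked example immediately before the lemma. Likewise for $(\ast,\circ)=(\ldiv,\ldiv)$, the candidate $(\rdiv,\rdiv)$ satisfies both the full identity $(x\ldiv y)\ldiv z = x\rdiv(y\rdiv z)$ and its specialization at $x=1$ on all abelian groups. No exponent count of any kind can remove these two candidates, so your uniqueness argument for the first bullets of (i) and (ii) does not go through as written.

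Your closing paragraph compounds the problem: $\Z_3$ (indeed, any abelian group) cannot serve as the witness for these residual cases, since both identities hold there. In a group the two spurious at-$1$ identities read $yz^{-1}=z^{-1}y$ and $y^{-1}z=zy^{-1}$, so they hold in a group if and only if it is abelian; the cheapest witness is therefore a non-abelian group such as $S_3$ (a non-associative loop also works, and is what the paper's post-lemma discussion of the \emph{excluded} cases requires). Your diagnosis that one must ``invoke a loop in which left and right inverses differ from the identity map'' misidentifies the obstruction: in $\Z_3$ inversion is already nontrivial, yet the spurious candidates survive; what kills them is non-commutativity (or failure of two-sided inverses), not non-triviality of inversion. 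Note that $\Z_3$ \emph{does} suffice for the second (commutator) bullets, where the residual case $\circledast=\ldiv$ dies on $y\ldiv 1\neq y$; the associator bullets are genuinely harder, and this asymmetry is the point your proposal misses.
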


The cases excluded in Lemma \ref{Lm:ACUnique} do not have a solution. For instance, with $*=\rdiv$ and $\circ = \cdot$, the only choices of $\circledast$, $\circledcirc\in\{\cdot,\rdiv,\ldiv\}$ that make $(x*y)\circ z = x\circledast(y\circledcirc z)$ valid in all abelian groups are $(x\rdiv y)\cdot z = x\cdot (y\ldiv z)$ and $(x\rdiv y)\cdot z = x\rdiv(y\rdiv z)$, and with $x=1$ these identities become $(1\rdiv y)\cdot z = y\ldiv z$ and $(1\rdiv y)\cdot z = 1\rdiv (y\rdiv z)$. But the first identity is not valid in all loops (take $z=1$ to get $1\rdiv y = y\ldiv 1$), and neither is the second identity (take $y=1$ to get $z=1/(1/z)$, i.e., $1/z = z\ldiv 1$).

We therefore obtain $12$ associators and $4$ commutators, summarized in Table \ref{Tb:AC}. The associators and commutators are presented in the form of inner mappings and also as actual elementwise associators and commutators. The $a$-associators of Table \ref{Tb:AC} are dual to the $b$-associators, and the $c$-commutators are dual to the $d$-commutators, in the order listed in the table. For instance, $a^{\cdot\rdiv}(x,y,z)$ is dual to $b^{\ldiv\cdot}(x,y,z)$.

It is interesting to point out that the multiplicative associators and commutators correspond to the standard generating set for the inner mapping groups:
$$A^{\cdot\cdot}_{y,z}=R_{z,y}, \quad B^{\cdot\cdot}_{y,z}=L_{z,y}, \quad C^{\cdot}_{y}=T_y^{-1}, \quad D^{\cdot}_{y}=T_y.$$

\begin{table}
\begin{displaymath}
\begin{array}{|l|l|l|}
    \hline
    \text{defining identity}&\text{as an inner mapping}&\text{as an associator/commutator}\\
    \hline\hline
    (x\cdot y)\cdot z = A^{\cdot\cdot}_{y,z}(x)\cdot (y\cdot z)& A^{\cdot\cdot}_{y,z} = R_{yz}^{-1}R_zR_y
        & a^{\cdot\cdot}(x,y,z) = A^{\cdot\cdot}_{y,z}(x)/x\\
    (x\cdot y)\ldiv z = A^{\cdot\ldiv}_{y,z}(x)\ldiv (y\ldiv z)& A^{\cdot\ldiv}_{y,z} = M_{y\ldiv z}^{-1}M_zR_y
        & a^{\cdot\ldiv}(x,y,z) = A^{\cdot\ldiv}_{y,z}(x)/x\\
    (x\cdot y)\rdiv z = A^{\cdot\rdiv}_{y,z}(x)\cdot (y\rdiv z)& A^{\cdot\rdiv}_{y,z} = R_{y\rdiv z}^{-1}R_z^{-1}R_y
        & a^{\cdot\rdiv}(x,y,z) = A^{\cdot\rdiv}_{y,z}(x)/x\\
    (x\ldiv y)\cdot z = A^{\ldiv\cdot}_{y,z}(x)\ldiv (y\cdot z)& A^{\ldiv\cdot}_{y,z} = M_{yz}^{-1}R_zM_y
        & a^{\ldiv\cdot}(x,y,z) = A^{\ldiv\cdot}_{y,z}(x)/x\\
    (x\ldiv y)\ldiv z = A^{\ldiv\ldiv}_{y,z}(x)\cdot (y\ldiv z)& A^{\ldiv\ldiv}_{y,z} = R_{y\ldiv z}^{-1}M_zM_y
        & a^{\ldiv\ldiv}(x,y,z) = A^{\ldiv\ldiv}_{y,z}(x)/x\\
    (x\ldiv y)\rdiv z = A^{\ldiv\rdiv}_{y,z}(x)\ldiv (y\rdiv z)& A^{\ldiv\rdiv}_{y,z} = M_{y\rdiv z}^{-1}R_z^{-1}M_y
        & a^{\ldiv\rdiv}(x,y,z) = A^{\ldiv\rdiv}_{y,z}(x)/x\\
    \hline
    z\cdot (y\cdot x) = (z\cdot y)\cdot B^{\cdot\cdot}_{y,z}(x)& B^{\cdot\cdot}_{y,z} = L_{zy}^{-1}L_zL_y
        & b^{\cdot\cdot}(x,y,z) = x\ldiv B^{\cdot\cdot}_{y,z}(x)\\
    z\rdiv (y\cdot x) = (z\rdiv y)\rdiv B^{\rdiv\cdot}_{y,z}(x)& B^{\rdiv\cdot}_{y,z} = M_{z\rdiv y}^{-1}M_z^{-1}L_y
        & b^{\rdiv\cdot}(x,y,z) = x\ldiv B^{\rdiv\cdot}_{y,z}(x)\\
    z\ldiv (y\cdot x) = (z\ldiv y)\cdot B^{\ldiv\cdot}_{y,z}(x)& B^{\ldiv\cdot}_{y,z} = L_{z\ldiv y}^{-1}L_z^{-1}L_y
        & b^{\ldiv\cdot}(x,y,z) = x\ldiv B^{\ldiv\cdot}_{y,z}(x)\\
    z\cdot (y\rdiv x) = (z\cdot y)\rdiv B^{\cdot\rdiv}_{y,z}(x)& B^{\cdot\rdiv}_{y,z} = M_{z\cdot y}L_zM_y^{-1}
        & b^{\cdot\rdiv}(x,y,z) = x\ldiv B^{\cdot\rdiv}_{y,z}(x)\\
    z\rdiv (y\rdiv x) = (z\rdiv y)\cdot B^{\rdiv\rdiv}_{y,z}(x)& B^{\rdiv\rdiv}_{y,z} = L_{z\rdiv y}^{-1}M_z^{-1}M_y^{-1}
        & b^{\rdiv\rdiv}(x,y,z) = x\ldiv B^{\rdiv\rdiv}_{y,z}(x)\\
    z\ldiv (y\rdiv x) = (z\ldiv y)\rdiv B^{\ldiv\rdiv}_{y,z}(x)& B^{\ldiv\rdiv}_{y,z} = M_{z\ldiv y}L_z^{-1}M_y^{-1}
        & b^{\ldiv\rdiv}(x,y,z) = x\ldiv B^{\ldiv\rdiv}_{y,z}(x)\\
    \hline
    x\cdot y = y\cdot C^{\cdot}_y(x)& C^{\cdot}_y = L_y^{-1}R_y
        & c^{\cdot}(x,y) = x\ldiv C^{\cdot}_y(x)\\
    x\ldiv y = y\rdiv C^{\ldiv}_y(x)& C^{\ldiv}_y = M_yM_y
        & c^{\ldiv}(x,y) = x\ldiv C^{\ldiv}_y(x)\\
    \hline
    y\cdot x = D^{\cdot}_y(x)\cdot y& D^{\cdot}_y = R_y^{-1}L_y
        & d^{\cdot}(x,y) = D^{\cdot}_y(x)/x\\
    y\rdiv x = d^{\rdiv}_y(x)\ldiv y& D^{\rdiv}_y = M_y^{-1}M_y^{-1}
        & d^{\rdiv}(x,y) = D^{\rdiv}_y(x)/x\\
    \hline
\end{array}
\end{displaymath}
\caption{Commutators and associators that yield inner mappings in loops.}
\label{Tb:AC}
\end{table}

The following discussion will be important with respect to the choice of associators and commutors that generate the derived subloops and the associator subloops.

\begin{lemma}\label{Lm:Factors}
Let $Q$ be a loop and $N\unlhd Q$.
\begin{enumerate}
\item[(i)] Let $a$ denote one of the associators $a^{\cdot\cdot}$, $a^{\cdot\rdiv}$, $a^{\ldiv\cdot}$, $a^{\ldiv\rdiv}$, $b^{\cdot\cdot}$, $b^{\ldiv\cdot}$, $b^{\cdot\rdiv}$, $b^{\ldiv\rdiv}$. Then $Q/N$ is a group if and only if $\{a(x,y,z);\;x,y,z\in Q\}\subseteq N$.
\item[(ii)] Let $a$ denote one of the associators $a^{\cdot\ldiv}$, $a^{\ldiv\ldiv}$, $b^{\rdiv\cdot}$, $b^{\rdiv\rdiv}$. Then $Q/N$ is an abelian group if and only if $\{a(x,y,z);\;x,y,z\in Q\}\subseteq N$.
\end{enumerate}
\end{lemma}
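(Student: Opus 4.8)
The plan is to pass to the quotient $P=Q/N$ and reinterpret membership in $N$ as an identity in $P$. Since $N\unlhd Q$, Proposition \ref{Pr:Inn} gives a surjective loop homomorphism $\pi\colon Q\to P$ with kernel $N$, and $\pi$ commutes with every loop term; write $\bar a=\pi(a)$. For any ternary loop term $t$ we then have $\overline{t(x,y,z)}=t(\bar x,\bar y,\bar z)$, and $t(x,y,z)\in N$ iff $\overline{t(x,y,z)}=1$ in $P$ (because $w\in N\iff w/1\in N\iff \bar w=1$). As $\pi$ is onto, the triples $(\bar x,\bar y,\bar z)$ range over all of $P$, so the condition $\{a(x,y,z);\,x,y,z\in Q\}\subseteq N$ is \emph{equivalent to the identity $a(x,y,z)=1$ holding in $P$}. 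Thus both parts reduce to a statement about identities in an arbitrary loop $P$: I must show that, for each associator in part (i), the identity $a=1$ characterizes groups among loops, and for each associator in part (ii) it characterizes abelian groups.

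Next I would strip off the elementwise layer. By Table \ref{Tb:AC} every $a$-associator has the form $a^{*\circ}(x,y,z)=A^{*\circ}_{y,z}(x)/x$ and every $b$-associator the form $b^{*\circ}(x,y,z)=x\ldiv B^{*\circ}_{y,z}(x)$. Since in any loop $u/x=1\iff u=x$ and $x\ldiv u=1\iff u=x$, the identity $a^{*\circ}=1$ (resp.\ $b^{*\circ}=1$) holds in $P$ precisely when the inner mapping $A^{*\circ}_{y,z}$ (resp.\ $B^{*\circ}_{y,z}$) is the identity map on $P$ for all $y,z$. So it remains to impose $A^{*\circ}_{y,z}(x)=x$ (resp.\ $B^{*\circ}_{y,z}(x)=x$) identically in the defining identity and show the result is equivalent to associativity (part (i)) or to associativity together with commutativity (part (ii)).

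The core computation is then elementary. For the model case $a^{\cdot\cdot}$, the defining identity $(xy)z=A^{\cdot\cdot}_{y,z}(x)\cdot(yz)$ with $A^{\cdot\cdot}_{y,z}(x)=x$ becomes $(xy)z=x(yz)$, i.e.\ associativity, while conversely associativity forces $A^{\cdot\cdot}_{y,z}(x)=((xy)z)/(yz)=x$; hence $a^{\cdot\cdot}=1$ in $P$ iff $P$ is a group. For the other $a$-associators in part (i) one substitutes to clear the outer division: for $a^{\ldiv\cdot}$ the reduced identity $(x\ldiv y)z=x\ldiv(yz)$ becomes $(xy)z=x(yz)$ after replacing $y$ by $xy$; the cases $a^{\cdot\rdiv}$ and $a^{\ldiv\rdiv}$ are handled by the analogous substitutions $y=wz$ and $y\mapsto xy$. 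For part (ii), the left-division associator $a^{\cdot\ldiv}$ yields $(xy)\ldiv z=x\ldiv(y\ldiv z)$; substituting $z=(xy)w$ turns this into $(xy)w=y(xw)$, whence $w=1$ extracts commutativity and then associativity follows. The case $a^{\ldiv\ldiv}$ reduces to the same mixed law $(xy)w=y(xw)$ via $y\mapsto xy$ followed by $z=(xy)w$.

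Finally, I would not treat the $b$-associators separately. The remark preceding Table \ref{Tb:AC} pairs each $a$-associator with a dual $b$-associator under the opposite-loop duality $x\cdot^{\mathrm{op}}y=yx$ (which exchanges $\ldiv$ and $\rdiv$ and sends groups to groups, abelian groups to abelian groups); all four $b$-associators of part (i) are duals of the four $a$-associators there, and the two $b$-associators of part (ii) are duals of $a^{\cdot\ldiv}$ and $a^{\ldiv\ldiv}$. Hence the $b$-cases follow at once (or by the verbatim dual substitutions). The only genuine subtlety, and the step I expect to require care, lies in part (ii): one must verify that the mixed law $(xy)w=y(xw)$ really forces \emph{both} commutativity and associativity, so that these four associators detect abelian groups rather than merely groups, which is exactly what separates them from the eight associators of part (i).
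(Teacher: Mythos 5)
Your proof is correct and takes essentially the same route as the paper: pass to the quotient $Q/N$, turn membership in $N$ into identities there, and manipulate those identities by substitution; your computations for $a^{\cdot\cdot}$ and $a^{\cdot\ldiv}$ (reduction to the mixed law $(xy)w=y(xw)$, then $w=1$ for commutativity, then associativity by swapping variables) coincide with the paper's two worked cases. The paper leaves the remaining ten cases to the reader, so your explicit treatment of them---bijective substitutions reducing the other $a$-cases to these two, and the opposite-loop duality (anticipated in the paper's remark pairing the $a$- and $b$-associators in order) disposing of all $b$-cases---simply fills in, correctly, what the paper omits.
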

\begin{proof}
We will give the proof for two cases and leave the remaining ten to the reader.

Let $a=a^{\cdot\cdot}$. Suppose that $a(x,y,z)\in N$ for every $x$, $y$, $z\in N$. In $Q/N$, $a(x,y,z)=1$, hence $(xy)z = (a(x,y,z)x)(yz) = x(yz)$. Conversely, if $Q/N$ is a group then, in $Q/N$, $x(yz) = (xy)z = (a(x,y,z)x)(yz)$, and $a(x,y,z)=1$ (or $a(x,y,z)\in N$) follows by cancelation in $Q/N$.

Let $a=a^{\cdot\ldiv}$. Suppose that $a(x,y,z)\in N$ for every $x$, $y$, $z\in N$. In $Q/N$, $a(x,y,z)=1$, hence $(xy)\ldiv z = (a(x,y,z)x)\ldiv(y\ldiv z) = x\ldiv(y\ldiv z)$, so $y(x((xy)\ldiv z)) = z$. Substituting $z=xy$, we obtain commutativity. Substituting $z=xy\cdot u$ and using commutativity, we obtain associativity. Conversely, if $Q/N$ is an abelian group then, in $Q/N$, $x^{-1}(y^{-1}z) = (xy)^{-1}z = (xy)\ldiv z = (a(x,y,z)x)\ldiv (y\ldiv z) = (a(x,y,z)x)^{-1}(y^{-1}z)$, and $a(x,y,z)=1$ (or $a(x,y,z)\in N$) follows by cancelation in $Q/N$.
\end{proof}

\subsection{The fundamental theorem in terms of commutators and associators}

The machinery of Theorem \ref{Th:CongrComInf} can now be applied to various subsets of the commutators and associators in Table \ref{Tb:AC}.

Whether we work with the inner mappings or with the elementwise commutators and associators is irrelevant. Indeed, for trivial reasons, the two elements $A^{\cdot\cdot}_{y_1,z_1}(x)$, $A^{\cdot\cdot}_{y_2,z_2}(x)$ are congruent if and only if the two elements $ a^{\cdot\cdot}(x,y_1,z_1) = A^{\cdot\cdot}_{y_1,z_1}(x)/x$, $a^{\cdot\cdot}(x,y_2,z_2) = A^{\cdot\cdot}_{y_2,z_2}(x)/x $ are congruent; and similarly for all other commutators/associators.\footnote{However, inner mappings can be composed to form a group while commutators and associators cannot be composed. This is why Theorem \ref{Th:CongrComInf} is stated in terms of inner mappings.}

Using the operations from Table \ref{Tb:AC}, we can reformulate Corollary \ref{Cr:CongrCom} as follows:

\begin{corollary}\label{Cr:CongrComAC}
Let $Q$ be a loop and $\alpha$, $\beta$ congruences of $Q$. Let $\mathcal W$ be defined as follows:
\begin{enumerate}
\item[(i)] If $Q$ is a loop, let $\mathcal W = \{A^{\cdot\cdot}_{x,y}$, $B^{\cdot\cdot}_{x,y}$, $A^{\cdot\ldiv}_{x,y}$, $C^\cdot_x\}$.
\item[(ii)] If $Q$ is an inverse property loop, let $\mathcal W = \{A^{\cdot\cdot}_{x,y}$, $C^\cdot_x\}$.
\item[(iii)] If $Q$ is a group, let $\mathcal W = \{C^\cdot_x\}$.
\item[(iv)] If $Q$ is a commutative loop, let $\mathcal W = \{A^{\cdot\cdot}_{x,y}$, $A^{\cdot\ldiv}_{x,y}\}$.
\end{enumerate}
Then
\begin{displaymath}
    [\alpha,\beta] = \Cg{ (W_{\bar u}(a), W_{\bar v}(a) );\;W\in\mathcal W,\,1\,\alpha\, a,\,\bar u\,\beta\,\bar v}.
\end{displaymath}
\end{corollary}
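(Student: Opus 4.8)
The plan is to read Corollary~\ref{Cr:CongrComAC} as a transcription of Corollary~\ref{Cr:CongrCom} through the dictionary
\begin{displaymath}
    A^{\cdot\cdot}_{y,z}=R_{z,y},\qquad B^{\cdot\cdot}_{y,z}=L_{z,y},\qquad C^{\cdot}_y=T_y^{-1},\qquad D^{\cdot}_y=T_y,
\end{displaymath}
recorded just before Table~\ref{Tb:AC}, supplemented by one identity for the single division-associator that appears, namely
\begin{displaymath}
    A^{\cdot\ldiv}_{y,z}=M_{z,y}U_y^{-1},
\end{displaymath}
which one checks by expanding $M_{z,y}=M_{y\ldiv z}^{-1}M_zM_y$ and $U_y^{-1}=M_y^{-1}R_y$. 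Two soft principles then let me move freely between the two notations: relabelling the parameters of a word leaves the induced congruence unchanged (because $\beta$ is symmetric and the parameters range over all of $Q$), and by the opening paragraph of the proof of Lemma~\ref{Lm:CongrComm-generators} an inner word and its formal inverse contribute the same pairs to the induced congruence. Hence the multiplicative generators $A^{\cdot\cdot}$, $B^{\cdot\cdot}$, $C^{\cdot}$ may be swapped for $R_{x,y}$, $L_{x,y}$, $T_x$ at no cost, and by the remark preceding the corollary the same is true of the passage between the inner mappings and their elementwise associator/commutator forms.

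For each of the four cases I would exhibit an \emph{augmented} set of words that provably generates total inner mapping groups in the relevant variety, apply Theorem~\ref{Th:Fundamental} to it, and then delete the auxiliary words by the pruning principles of Proposition~\ref{Pr:Pruning}. In case (i) the augmented set is $\{A^{\cdot\cdot}_{x,y},B^{\cdot\cdot}_{x,y},C^{\cdot}_x,A^{\cdot\ldiv}_{x,y},U_x\}$: the first three words give $\inn Q$ by Proposition~\ref{Pr:InnGens}, while $A^{\cdot\ldiv}$ and $U$ together recover $M_{z,y}=A^{\cdot\ldiv}_{y,z}U_y$, so by Proposition~\ref{Pr:TotInnGens} the set generates $\totinn Q$. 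Since $A^{\cdot\cdot}=R$ lies in the set, Proposition~\ref{Pr:Pruning}(ii) removes $U_x$, leaving exactly the set of Corollary~\ref{Cr:CongrComAC}(i). Case (iv) is identical once one notes $R_x=L_x$ in a commutative loop, so that $A^{\cdot\cdot}$ plays the role of both $L$ and $R$ and $\{A^{\cdot\cdot},A^{\cdot\ldiv},U\}$ generates $\totinn Q$ by the commutative clause of Example~\ref{Ex:GeneratingSets}. In case (iii) the augmented set $\{C^{\cdot}_x,J\}=\{T_x^{-1},J\}$ generates $\totinn Q$ for a group by the group clause of Example~\ref{Ex:GeneratingSets}, and $J$, being parameter-free, is deleted by Proposition~\ref{Pr:Pruning}(i).

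The main obstacle is case (ii). Proposition~\ref{Pr:TotIP} furnishes the $L$-based set $\{L_{x,y},T_x,J\}$, whereas Corollary~\ref{Cr:CongrComAC}(ii) asks for the $R$-based set $\{A^{\cdot\cdot}_{x,y},C^{\cdot}_x\}=\{R_{x,y},T_x^{-1}\}$, and there is no purely formal reindexing that turns one into the other. I would therefore prove directly that $\{R_{x,y},T_x,J\}$ generates $\totinn Q$ in every inverse property loop, after which pruning the parameter-free $J$ finishes the case. To do this I would apply Schreier's Lemma~\ref{Lm:StabilizerGens} to $\totmlt Q=\langle R_x,J\rangle$ with transversal $g_y=R_y$: each $R_x$ yields the generator $R_{x,y}$, and $J$ yields $R_{y^{-1}}^{-1}JR_y$. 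The crux is to simplify this last mapping. Using the anti-automorphic inverse property relations $JR_yJ=L_y^{-1}$ and $JL_yJ=R_y^{-1}$ (together with $R_{y^{-1}}^{-1}=R_y$), one computes
\begin{displaymath}
    R_{y^{-1}}^{-1}JR_y=R_yL_y^{-1}J=JT_y^{-1},
\end{displaymath}
which visibly lies in $\langle R_{x,y},T_x,J\rangle$; conversely $T_x$ and $J$ are recovered from $R_{y^{-1}}^{-1}JR_y$ and its value at $y=1$. This identity is the heart of the argument and the one place where the inverse property is genuinely used. Once it is in hand, the generation claim follows, and rewriting the surviving inner mappings as the corresponding $a^{\cdot\cdot}$, $b^{\cdot\cdot}$, $a^{\cdot\ldiv}$, $c^{\cdot}$ completes the proof.
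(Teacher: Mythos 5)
Your proposal is correct, and on cases (i), (iii) and (iv) it coincides with the paper's own proof, which consists precisely of the dictionary identities $R_{x,y}=A^{\cdot\cdot}_{y,x}$, $L_{x,y}=B^{\cdot\cdot}_{y,x}$, $T_x=(C^\cdot_x)^{-1}$, $M_{x,y}=A^{\cdot\ldiv}_{y,x}U_y$ (the paper misprints the last subscript as $U_x$; your form $A^{\cdot\ldiv}_{y,z}=M_{z,y}U_y^{-1}$ is the correct one), the remark that $\mathcal W\cup\{U_x\}$ therefore does the job by Corollary \ref{Cr:CongrCom}, and the removal of $U_x$ by Proposition \ref{Pr:Pruning}(ii). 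The genuine difference is case (ii), and there your treatment is more complete than the paper's. The paper's one-line argument implicitly applies the dictionary to Corollary \ref{Cr:CongrCom}(ii), but that corollary only provides the left-handed sets $\{L_{x,y},T_x\}$ and $\{M_{x,y}\}$, while the statement to be proved is right-handed: $\{A^{\cdot\cdot}_{x,y},C^\cdot_x\}=\{R_{y,x},T_x^{-1}\}$ up to relabelling and formal inversion. The paper closes this gap only tacitly --- either by the mirror duality of inverse property loops (under which $L_{x,y}\leftrightarrow R_{x,y}$ and $T_x\leftrightarrow T_x^{-1}$), or by the observation that $U_x=J$ in such loops, so that $\mathcal W\cup\{U_x\}$ is exactly your set $\{R_{x,y},T_x^{-1},J\}$. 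Your direct argument --- Schreier's Lemma \ref{Lm:StabilizerGens} applied to $\totmlt Q=\langle R_x,J;\;x\in Q\rangle$ with transversal $g_y=R_y$, the computation $R_{y^{-1}}^{-1}JR_y=JT_y^{-1}$ (which checks out: $JR_y=L_y^{-1}J$ and $JL_y^{-1}=R_yJ$ make both sides equal to $R_yJR_y$), and pruning of the parameter-free word $J$ by Proposition \ref{Pr:Pruning}(i) --- is exactly the mirror image of the paper's proof of Proposition \ref{Pr:TotIP} and is correct. What the paper's proof buys is brevity; what yours buys is that the right-handed generation fact for inverse property loops, on which part (ii) actually rests, is proved rather than left to the reader.
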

\begin{proof}
Note that $R_{x,y}=A^{\cdot\cdot}_{y,x}$, $L_{x,y}=B^{\cdot\cdot}_{y,x}$, $T_x=(C^\cdot_x)^{-1}$ and $M_{x,y}=A^{\cdot\ldiv}_{y,x}U_x$, so $\mathcal W\cup\{U_x\}$ does the job, by Corollary \ref{Cr:CongrCom}. We can remove $U_x$ by Proposition \ref{Pr:Pruning}.
\end{proof}

It follows from Theorem \ref{Th:FundamentalFinite} that for loops with a finiteness condition we can remove $A^{\cdot\ldiv}_{x,y}$ from the sets $\W$ of Corollary \ref{Cr:CongrComAC}.

\begin{problem}
Find all minimal subsets of the $12$ associators and $4$ commutators that, together with $M_1$, generate total inner mapping groups in all loops.
\end{problem}

\section{The commutator of normal subloops}\label{Sc:SubloopComm}

The correspondence $\alpha\mapsto N_\alpha$, $N\mapsto \gamma_N$ between loop congruences and normal subloops allows us to restate Theorem \ref{Th:CongrComInf} and all its corollaries in terms of normal subloops, rather than in terms of congruences.

\begin{lemma}\label{Lm:generators}
Let $Q$ be a loop.
\begin{enumerate}
	\item[(i)] If $X\subseteq Q\times Q$ and $\alpha=\mathrm{Cg}(X)$, then $N_\alpha=\Ng{x/y;\;(x,y)\in X}$.
	\item[(ii)] If $X\subseteq Q$ and $N=\mathrm{Ng}(X)$, then $\gamma_N=\Cg{(x,1);\;x\in X}$.
\end{enumerate}
\end{lemma}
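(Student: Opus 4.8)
The plan is to reduce both statements to a single identity relating the congruence relation to the right division, together with the fact that $\alpha\mapsto N_\alpha$ and $N\mapsto\gamma_N$ are mutually inverse lattice isomorphisms. The crucial identity is that for any congruence $\alpha$ of $Q$ and any $a,b\in Q$,
\[
    a\,\alpha\,b \iff (a/b)\,\alpha\,1.
\]
This is immediate: if $a\,\alpha\,b$ then $(a/b)\,\alpha\,(b/b)=1$ because $\alpha$ is compatible with the division $/$, and conversely $(a/b)\,\alpha\,1$ gives $a=(a/b)\cdot b\,\alpha\,1\cdot b=b$.

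From this identity I would first record that the two maps are mutually inverse: $N_{\gamma_N}=\{a;\;a/1\in N\}=N$, and $\gamma_{N_\alpha}=\{(a,b);\;(a/b)\,\alpha\,1\}=\{(a,b);\;a\,\alpha\,b\}=\alpha$. Both maps are visibly order-preserving (if $\alpha\subseteq\beta$ then $N_\alpha\subseteq N_\beta$, and if $M\subseteq N$ then $\gamma_M\subseteq\gamma_N$), so together they form an order isomorphism between the complete lattice of congruences of $Q$ and the complete lattice of normal subloops of $Q$. Since suprema are determined purely by the order, this isomorphism preserves arbitrary joins in both directions.

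Next I would treat the one-generator case, namely
\[
    N_{\mathrm{Cg}(x,y)} = \mathrm{Ng}(x/y), \qquad\text{equivalently}\qquad \gamma_{\mathrm{Ng}(x/y)} = \mathrm{Cg}(x,y).
\]
Indeed, for any congruence $\beta$ the identity above gives $x\,\beta\,y\iff(x/y)\,\beta\,1\iff x/y\in N_\beta$; hence the smallest congruence $\beta$ with $x\,\beta\,y$, which is $\mathrm{Cg}(x,y)$, is exactly the smallest congruence whose associated normal subloop contains $x/y$, and that is $\gamma_{\mathrm{Ng}(x/y)}$ under the isomorphism. Finally I assemble the two parts using the fact that joins of principal objects recover generation: $\mathrm{Cg}(X)=\bigvee_{(x,y)\in X}\mathrm{Cg}(x,y)$ and $\mathrm{Ng}(S)=\bigvee_{s\in S}\mathrm{Ng}(s)$, together with $\bigvee_i\mathrm{Ng}(S_i)=\mathrm{Ng}(\bigcup_i S_i)$ and its congruence analogue. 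For (i), applying the join-preserving isomorphism $\alpha\mapsto N_\alpha$ to $\alpha=\mathrm{Cg}(X)$ and invoking the one-generator case yields
\[
    N_\alpha = \bigvee_{(x,y)\in X} N_{\mathrm{Cg}(x,y)} = \bigvee_{(x,y)\in X}\mathrm{Ng}(x/y) = \mathrm{Ng}(\{x/y;\;(x,y)\in X\}).
\]
For (ii), applying $N\mapsto\gamma_N$ to $N=\mathrm{Ng}(X)$ and using the one-generator case with $y=1$ (so that $x/1=x$) yields
\[
    \gamma_N = \bigvee_{x\in X}\gamma_{\mathrm{Ng}(x)} = \bigvee_{x\in X}\mathrm{Cg}(x,1) = \mathrm{Cg}(\{(x,1);\;x\in X\}).
\]

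I do not anticipate a serious obstacle. The only point demanding a little care is the passage from an order isomorphism to the preservation of arbitrary joins, and the matching of those joins with the generation operators $\mathrm{Cg}$ and $\mathrm{Ng}$; I would state this explicitly, since generation of congruences and of normal subloops are precisely such joins. Everything else rests on the single division identity $a\,\alpha\,b\iff(a/b)\,\alpha\,1$, applied once to establish the isomorphism and once in the one-generator reduction.
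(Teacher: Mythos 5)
Your proposal is correct; every step (the division identity, the mutual inverseness, join preservation by the order isomorphism, the principal case, and the assembly by joins) checks out. It does, however, package the argument differently from the paper. The paper's proof of (i) is a terse double inclusion exploiting minimality directly: setting $N=\Ng{x/y;\;(x,y)\in X}$, each generator $x/y$ of $N$ lies in $N_\alpha$ (since $(x,y)\in X\subseteq\alpha$ forces $x/y\,\alpha\,1$), so $N\subseteq N_\alpha$; conversely each generating pair $(x,y)$ of $\alpha$ lies in $\gamma_N$ (since $x/y\in N$), so $\alpha\leq\Cg{X}\leq\gamma_N$ and hence $N_\alpha\subseteq N_{\gamma_N}=N$; part (ii) is declared similar. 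You instead first upgrade the correspondence $\alpha\mapsto N_\alpha$, $N\mapsto\gamma_N$ to a join-preserving isomorphism of complete lattices, settle the one-generator cases $N_{\mathrm{Cg}(x,y)}=\mathrm{Ng}(x/y)$ and $\gamma_{\mathrm{Ng}(x)}=\mathrm{Cg}(x,1)$, and then transport the joins $\mathrm{Cg}(X)=\bigvee_{(x,y)\in X}\mathrm{Cg}(x,y)$ and $\mathrm{Ng}(X)=\bigvee_{x\in X}\mathrm{Ng}(x)$ across the isomorphism. Both arguments ultimately rest on the same two facts --- the identity $a\,\alpha\,b\iff(a/b)\,\alpha\,1$ and the bijective correspondence --- but yours makes the structural mechanism explicit (the lemma becomes an instance of ``isomorphisms preserve generated objects'' once generation is recognized as a join), at the cost of more scaffolding; the paper's version avoids all lattice-theoretic overhead by invoking the extremal characterizations of $\Cg{\cdot}$ and $\Ng{\cdot}$ directly, which is why it fits in three lines. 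One small virtue of your route is that (i) and (ii) fall out symmetrically from the same principal computation, whereas the paper proves (i) and waves at (ii).
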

\begin{proof}
(i) Let $N=\Ng{x/y;\;(x,y)\in X}$. Since $(x,y)\in X\subseteq\alpha$, we immediately get $N\subseteq N_\alpha$. On the other hand, since $(x/y,1)\in\gamma_N$ for every $(x,y)\in X$, we also get $(x,y)\in\gamma_N$ for every $(x,y)\in X$, hence $\alpha\leq\gamma_N$.
Part (ii) is similar.
\end{proof}

Applying this observation to Theorem \ref{Th:CongrComInf}, we immediately get a generating set for the commutator of two normal subloops, as described in Theorem \ref{Th:FundamentalS}, stating that
\begin{displaymath}
    [A,B]_Q = \Ng{W_{\bar u}(a)/W_{\bar v}(a);\;W\in\mathcal W,\,a\in A,\,\bar u/\bar v\in B }
\end{displaymath}
for any normal subloops $A$, $B$ of any loop $Q$ in a variety $\V$, where $\mathcal W$ is a set of words that generates total inner mapping groups in $\V$. Of course, in loops with a finiteness condition we only need $\W$ that generates inner mapping groups. Using Corollary \ref{Cr:CongrComAC}, we obtain generating sets consisting of quotients of certain associators and commutators. Let us discuss the case of groups first.

Let $Q$ be a group and $A$, $B\unlhd Q$. Note that $C_y^\cdot(x) = y^{-1}xy$ and $c^\cdot(x,y) = x^{-1}y^{-1}xy = [x,y]$. Corollary \ref{Cr:CongrComAC} therefore yields
\begin{displaymath}
    [A,B]_Q=\Ng{[a,u]/[a,v];\;a\in A,\,u/v\in B}.
\end{displaymath}
From this it is not difficult to recover the standard group-theoretical result
\begin{displaymath}
    [A,B]_Q=\langle[a,b];\;a\in A,\,b\in B\rangle.
\end{displaymath}
Namely, let $N_1=\Ng{[a,u]/[a,v];\;a\in A,\,u/v\in B}$ and $N_2=\langle[a,b];\;a\in A,\,b\in B\rangle$. First notice that $N_2$ is a normal subgroup, since $\inn{Q}=\langle T_x;\;x\in Q\rangle\le\aut{Q}$ and thus $T_x([a,b]) = [T_x(a),T_x(b)]$ and $T_x(a)\in A$, $T_x(b)\in B$ for every $a\in A$, $b\in B$. Taking $u=b\in B$ and $v=1$, we obtain $[a,b]=[a,u]/[a,v]$ with $u/v\in B$, hence $N_2\subseteq N_1$. Conversely, calculating in $Q/N_2$, we get $[a,u]/[a,v] = a^{-1}u^{-1}auv^{-1}a^{-1}va = 1$, since we can commute $uv^{-1}\in B$ with $a\in A$ and cancel; this shows $N_1\subseteq N_2$.

The discussion of the group case raises two natural questions for general loops. First, is the subloop generated by the quotients always normal?
Second, can we dispose of the quotients in the generating set of $[A,B]_Q$?

Let us first answer the first question. In general, normality fails, as the following example and Corollary \ref{Cr:CongrCom} illustrate:
\begin{example}
Let $Q$ be the commutative loop
\begin{displaymath}
    \begin{array}{c|cccccccc}
    &0&1&2&3&4&5&6&7\\
    \hline
    0&0&1&2&3&4&5&6&7\\
    1&1&0&3&2&5&4&7&6\\
    2&2&3&1&0&6&7&4&5\\
    3&3&2&0&1&7&6&5&4\\
    4&4&5&6&7&3&0&1&2\\
    5&5&4&7&6&0&3&2&1\\
    6&6&7&4&5&1&2&3&0\\
    7&7&6&5&4&2&1&0&3
    \end{array}
\end{displaymath}
Then $A=\{0,1,2,3\}$ is a normal subloop of $Q$. But $\langle L_{u_1,u_2}(a)/L_{v_1,v_2}(a);\;a\in A,\,\bar u/\bar v\in A\rangle = \{0,1\}$ is not a normal subloop of $Q$.
\end{example}

On the other hand, in many loops, the answer is positive. The gist of the proof for groups was the fact that inner mappings of groups are automorphisms. Recall that a loop $Q$ is said to be automorphic if $\inn{Q}\le\aut{Q}$.

\begin{proposition}\label{Pr:automorphic}
Let $\V$ be a variety of automorphic loops and $\mathcal W$ a set of words that generates total inner mapping groups in $\V$. Then
\begin{displaymath}
    [A,B]_Q = \langle W_{\bar u}(a)/W_{\bar v}(a);\;W\in\mathcal W,\,a\in A,\,\bar u/\bar v\in B\rangle
\end{displaymath}
for any normal subloops $A$, $B$ of any $Q\in\V$.
\end{proposition}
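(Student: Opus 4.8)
The plan is to show that the subloop
\begin{displaymath}
    S=\langle W_{\bar u}(a)/W_{\bar v}(a);\;W\in\W,\,a\in A,\,\bar u/\bar v\in B\rangle
\end{displaymath}
on the right-hand side is in fact normal, so that it coincides with its own normal closure, which equals $[A,B]_Q$ by Theorem \ref{Th:FundamentalS}. The containment $S\subseteq[A,B]_Q$ is immediate, since $[A,B]_Q$ is the normal closure of the very same generating set. For the reverse containment it suffices, by Proposition \ref{Pr:Inn}, to verify that $S$ is closed under every $f\in\inn Q$: then $S$ is a normal subloop containing the generators, and minimality of the normal closure forces $[A,B]_Q=\Ng{\dots}\subseteq S$.

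The key computation exploits the automorphic hypothesis. Fix $f\in\inn Q$; since $\V$ consists of automorphic loops, $f\in\aut Q$. For any $x\in Q$ and any translation letter $K\in\{L,R,M\}$ one checks directly that $fK_xf^{-1}=K_{f(x)}$, and likewise $fK_x^{-1}f^{-1}=K_{f(x)}^{-1}$, because $f$ preserves $\cdot$, $\ldiv$, $\rdiv$. Since the word $W_{\bar u}$ is a composition of such letters indexed by terms $t_i(\bar u)$, and $f(t_i(\bar u))=t_i(f(\bar u))$ as $f$ is an automorphism, conjugation telescopes to $fW_{\bar u}f^{-1}=W_{f(\bar u)}$. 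Consequently
\begin{displaymath}
    f\bigl(W_{\bar u}(a)/W_{\bar v}(a)\bigr)=f(W_{\bar u}(a))/f(W_{\bar v}(a))=W_{f(\bar u)}(f(a))/W_{f(\bar v)}(f(a)).
\end{displaymath}

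It remains to recognize the last expression as a generator of $S$. Since $A$ and $B$ are normal, Proposition \ref{Pr:Inn} gives $f(A)=A$ and $f(B)=B$, whence $f(a)\in A$; moreover $f(u_i)/f(v_i)=f(u_i/v_i)\in f(B)=B$, i.e. $f(\bar u)/f(\bar v)\in B$. Thus $f$ maps the generating set of $S$ into itself, so $f(S)=\langle f(W_{\bar u}(a)/W_{\bar v}(a))\rangle\subseteq S$; applying the same reasoning to $f^{-1}\in\inn Q$ yields $f(S)=S$. Hence $S$ is invariant under $\inn Q$ and therefore normal, which completes the argument.

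I expect the only delicate point to be the equivariance identity $fW_{\bar u}f^{-1}=W_{f(\bar u)}$: this is precisely where the automorphic hypothesis is used, and the identity is false once $f$ fails to be an automorphism (cf.\ the non-normality in the preceding example). It is worth stressing that we only ever conjugate by elements of $\inn Q$, never of $\totinn Q$, so the hypothesis $\inn Q\le\aut Q$ is exactly what is needed, even though $\W$ is required to generate the \emph{total} inner mapping group.
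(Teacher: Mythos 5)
Your proof is correct, and it shares the paper's skeleton: reduce the claim, via Theorem \ref{Th:FundamentalS} and Proposition \ref{Pr:Inn}, to showing that the subloop $S$ on the right-hand side is normal, and use $\inn{Q}\le\aut{Q}$ to reduce normality to a check on generators. Where you differ from the paper is in how that check is performed, and your version is the sharper one. The paper takes a generator $F\in\{L_{x,y},R_{x,y},T_x\}$ of $\inn{Q}$, notes that $FW$ is again an inner word for each $W\in\W$, and applies Lemma \ref{Lm:CongrComm-generators} to conclude $F_{\bar x}(W_{\bar u}(a)/W_{\bar v}(a))=F_{\bar x}W_{\bar u}(a)/F_{\bar x}W_{\bar v}(a)\in[A,B]_Q$; as stated, this places the image of a generator only in the normal closure $[A,B]_Q$, not in $S$, which is what normality of $S$ actually requires (membership in $[A,B]_Q$ is automatic, since normal subloops are invariant under inner mappings, so by itself it cannot distinguish $S$ from its normal closure). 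Your equivariance identity $fW_{\bar u}f^{-1}=W_{f(\bar u)}$ pins the image down inside $S$ itself: it exhibits $f(W_{\bar u}(a)/W_{\bar v}(a))$ as the generator $W_{f(\bar u)}(f(a))/W_{f(\bar v)}(f(a))$, with legitimate parameters $f(a)\in A$ and $f(\bar u)/f(\bar v)\in B$ by normality of $A$ and $B$. This both closes the argument cleanly and makes the proof self-contained, bypassing Lemma \ref{Lm:CongrComm-generators} and the non-constructive Lemma \ref{Lm:free} behind it. Your closing remark is also accurate: only conjugation by elements of $\inn{Q}$ is ever needed, so the automorphic hypothesis is exactly what the argument consumes, while the requirement that $\W$ generate \emph{total} inner mapping groups is used only to invoke Theorem \ref{Th:FundamentalS}.
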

\begin{proof}
Denote the right hand side subloop by $N$. In view of Theorem \ref{Th:FundamentalS}, we only need to check that $N$ is normal in $Q$. Since inner mappings are automorphisms, we only need to check that the generators of $N$ are preserved by inner mappings. Let $F$ be any of the words $L_{x,y},R_{x,y},T_x$, and let $W\in\W$. Then the composition $FW$ is also an inner word. Hence, by Lemma \ref{Lm:CongrComm-generators}, $(F_{\bar x}W_{\bar u}(a),F_{\bar x}W_{\bar v}(a))\in[\gamma_A,\gamma_B]$ for every tuple $\bar x$ over $Q$, $a\in A$ and tuples $\bar u$, $\bar v$ over $Q$ such that $\bar u/\bar v\in B$. Thus $F_{\bar x}(W_{\bar u}(a)/W_{\bar v}(a))=F_{\bar x}W_{\bar u}(a)/F_{\bar x}W_{\bar v}(a)\in[A,B]_Q$.
\end{proof}

\begin{problem}
Characterize loops $Q$ such that (with the notation of Theorem \ref{Th:NormComInf}) the subloop $\langle W_{\bar u}(a)/W_{\bar v}(a);\;W\in\mathcal W,\,a\in A,\,\bar u/\bar v\in B\rangle$ is normal for all subloops $A$, $B\unlhd Q$.
\end{problem}

The second question, whether quotients can be reduced, is also tricky. Example \ref{Ex:minus} shows that it is not possible to get rid of quotients in the standard generating set, or in the generating set resulting from elementwise associators and commutators. (It might be possible to get rid of all quotients in different generating sets.) On the other hand, Proposition \ref{Pr:Pruning} says that some quotients can be removed: words without parameters for good, and the words $U_x$ and $T_x$ can be replaced by certain associators and commutators.

\begin{corollary}\label{Cr:FundamentalS}
Let $Q$ be a loop and $A,B$ normal subloops of $Q$.
\begin{itemize}
	\item[(i)] Then
\begin{align*}
    [A,B]_Q = \Ng{ [a,b],\,&[b,a,x],\,W_{u_1,u_2}(a)/W_{v_1,v_2}(a);\\&W\in\{L,R,M\},\,a\in A,\,b\in B,\,x\in Q,\,\bar u/\bar v\in B }.
\end{align*}
	\item[(ii)] If $Q$ is an inverse property loop, then
$$    [A,B]_Q = \Ng{ [a,b],\,L_{u_1,u_2}(a)/L_{v_1,v_2}(a);\;a\in A,\,b\in B,\,\bar u/\bar v\in B }.$$
	\item[(iii)] If $Q$ is a group, then
$$    [A,B]_Q = \langle [a,b];\; a\in A,\,b\in B \rangle.$$
	\item[(iv)] If $Q$ is a commutative loop, then
$$    [A,B]_Q = \Ng{ W_{u_1,u_2}(a)/W_{v_1,v_2}(a);\;W\in\{L,M\},\,a\in A,\,\bar u/\bar v\in B }.$$
\end{itemize}
\end{corollary}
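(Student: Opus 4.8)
The plan is to derive all four parts from the normal-subloop form of the fundamental theorem, Theorem \ref{Th:FundamentalS}, fed with the pruned generating sets already recorded in Corollary \ref{Cr:CongrCom}, and then to translate the surviving quotients into the elementwise commutators and associators of Table \ref{Tb:AC}. For a variety $\V$ and a set $\W$ generating total inner mapping groups, Theorem \ref{Th:FundamentalS} gives $[A,B]_Q=\Ng{W_{\bar u}(a)/W_{\bar v}(a);\,W\in\W,\,a\in A,\,\bar u/\bar v\in B}$; combined with the dictionary of Lemma \ref{Lm:generators}(i) and the sets of Corollary \ref{Cr:CongrCom}, this writes $[A,B]_Q$, in each listed class, as a normal closure of quotients of $L_{x,y}$, $R_{x,y}$, $M_{x,y}$ and $T_x$ translations. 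Part (iv) is then immediate, since Corollary \ref{Cr:CongrCom}(iv) already uses exactly $\{L_{x,y},M_{x,y}\}$ and there is nothing further to simplify; part (iii) is nothing but the group computation carried out in the discussion preceding the statement, where $T_x=(C^\cdot_x)^{-1}$ and $c^\cdot(x,y)=[x,y]$ collapse $\Ng{T_u(a)/T_v(a)}$ to $\langle[a,b]\rangle$. The genuine content lies in (i) and (ii), where the only remaining task is to trade the $T$-quotients $T_u(a)/T_v(a)$ for the commutator $[a,b]$ and, in (i), the associator $[b,a,x]$.

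For these two parts I would prove two inclusions of normal closures, writing $G_2$ for the proposed generating set. First, to see that the new generators lie in $[A,B]_Q=N_{[\gamma_A,\gamma_B]}$, I work directly in $Q$ with $\delta=[\gamma_A,\gamma_B]$ and use the centrality $C(\gamma_A,\gamma_B;\delta)$ that holds by the very definition of the commutator. Applying the term condition to $t(w,y)=(wy)\ldiv(yw)$, with $w$ the $\gamma_A$-variable ($1\,\gamma_A\,a$) and $y$ the $\gamma_B$-variable ($b\,\gamma_B\,1$): since $t(1,b)=1=t(1,1)$, the term condition forces $t(a,b)\,\delta\,t(a,1)=1$, that is $ab\,\delta\,ba$, whence $[a,b]\in[A,B]_Q$. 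The same device applied to $t(w,y)=((yw)x)\ldiv(y(wx))$ yields $(ba)x\,\delta\,b(ax)$, so $[b,a,x]\in[A,B]_Q$. This gives $\Ng{G_2}\subseteq[A,B]_Q$. For the reverse inclusion it suffices to show the $T$-quotients lie in $\Ng{G_2}$, and here I invoke Proposition \ref{Pr:Pruning}(iii): modulo the congruence $\equiv$ induced by $G_2$, the word $T_x$ is removable once the four conditions $[a,b]\equiv1$, $[a,b,x]\equiv1$, $[b,a,x]\equiv1$, $[x,b,a]\equiv1$ hold for all $a\in A$, $b\in B$, $x\in Q$.

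The crux is verifying these four conditions modulo $\Ng{G_2}$, and this is exactly where (i) and (ii) diverge. In part (i) the retained $R_{x,y}$-quotients give $[a,b,x]\equiv1$ (specialize $R_{x,b}(a)\equiv R_{x,1}(a)=a$), the retained $L_{x,y}$-quotients give $[x,b,a]\equiv1$ (specialize $L_{x,b}(a)\equiv L_{x,1}(a)=a$), the commutator generator gives $[a,b]\equiv1$, and the associator generator gives $[b,a,x]\equiv1$, so all four hypotheses hold. The point worth stressing is that the translation-quotients only ever constrain the $A$-element $a$ in an \emph{outer} slot of an associator, whereas $[b,a,x]$ has $a$ in the middle; this is precisely why that associator cannot be produced from the others and must be listed explicitly in (i). In the inverse property case (ii) the extra associator becomes superfluous, and the delicate step is to recover the missing conditions from the single family $[x,b,a]\equiv1$ that the $L$-quotients still supply for all $x$: the inverse symmetry $[p,q,r]=1\Leftrightarrow[r^{-1},q^{-1},p^{-1}]=1$ valid under the AAIP turns $[x,b,a]\equiv1$ into $[a,b,x]\equiv1$, while reading $[x,b,a]\equiv1$ as the operator identity $R_aR_b=R_{ba}$ and pushing it through $R_a=JL_a^{-1}J$ turns it into $L_{ba}=L_bL_a$, i.e. $[b,a,x]\equiv1$; together with $[a,b]\equiv1$ from the commutator generator, all four hypotheses of Proposition \ref{Pr:Pruning}(iii) are met. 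I expect this inverse-symmetry bookkeeping in (ii)---and the routine check that auxiliary elements such as $T_v^{-1}(a)$ remain inside $A$---to be the main obstacle, after which Proposition \ref{Pr:Pruning}(iii) closes the reverse inclusion and the two inclusions combine to give the stated equalities.
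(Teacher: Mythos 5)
Your proposal is correct and follows essentially the same route as the paper's own proof: both reduce to the normal-subloop translation of Corollary \ref{Cr:CongrCom} via Lemma \ref{Lm:generators}, and then eliminate the $T_x$-quotients by checking the hypotheses of Proposition \ref{Pr:Pruning}(iii) modulo the right-hand side $N$ (the $L$- and $R$-quotients yielding $[x,b,a]\equiv 1$ and $[a,b,x]\equiv 1$, the listed elementwise generators supplying $[a,b]\equiv 1$ and, in (i), $[b,a,x]\equiv 1$, with AAIP symmetry filling the gaps in (ii)). Your deviations are only cosmetic: you make explicit the easy inclusion $\Ng{G_2}\subseteq[A,B]_Q$ via the term condition, which the paper leaves implicit; in (ii) you derive $[b,a,x]\equiv 1$ from the operator identity $R_a=JL_a^{-1}J$ where the paper uses an equivalent elementwise AAIP substitution chain reducing it to $[a^{-1},b^{-1},x]=1$; and you obtain (iii) from the group computation preceding the statement rather than, as the paper does, from (ii) together with Proposition \ref{Pr:automorphic}.
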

\begin{proof}
Corollary \ref{Cr:CongrCom} can be translated via Lemma \ref{Lm:generators} in the same way that we have translated Theorem \ref{Th:CongrComInf} into Theorem \ref{Th:NormComInf}. We will use the translation of Corollary \ref{Cr:CongrCom} without reference. In all cases, let $N$ denote the subloop on the right hand side.

(i) We check that $N$ satisfies the assumptions of conditions (ii), (iii) of Proposition \ref{Pr:Pruning}. We will calculate in $Q/N$. For every $a\in A$, $b\in B$, $x\in Q$, we have
$R_{x,b}(a)=R_{x,1}(a)=a$, so $[a,x,b]=1$, and $R_{x,b}(a) = R_{x,1}(a) = a$, so $[a,b,x]=1$, and also
$L_{x,b}(a)=L_{x,1}(a)=a$, so $[x,b,a]=1$.

(ii) Following the proof of case (i), we only need to show that $[b,a,x]\in N$ for every $a\in A$, $b\in B$, $x\in Q$. The following statements, universally quantified with $a\in A$, $b\in B$, $x\in Q$, are equivalent: $b\cdot ax = ba\cdot x$, $ax = b^{-1}(ba\cdot x)$, (use substitution $x\mapsto (ba)^{-1}x$) $a\cdot (ba)^{-1}x = b^{-1}x$, $(ba)^{-1}x = a^{-1}\cdot b^{-1}x$, (use the AAIP) $[a^{-1},b^{-1},x]=1$.

(iii) This follows from case (ii) once we realize that $L_{x,y}=1$. Proposition \ref{Pr:automorphic} applies.

(iv) This is merely a restatement of Corollary \ref{Cr:CongrCom}(iv).
\end{proof}

In loops with a finiteness condition we can omit the mappings $M_{x,y}$ from the sets $\W$ of Corollary \ref{Cr:FundamentalS}.

In the proof, we rely on the ad hoc arguments of Proposition \ref{Pr:Pruning}. We therefore ask:

\begin{problem}
Describe systematically when quotients of inner mappings can be reduced, analogously to Corollary \ref{Cr:FundamentalS}.
\end{problem}

\section{The derived subloop and the associator subloop}\label{Sc:ASDS}

Recall that the derived subloop $Q'$ of $Q$ is the smallest normal subloop of $Q$ such that $Q/Q'$ is an abelian group,
and the associator subloop $A(Q)$ of a loop $Q$ is the smallest normal subloop of $Q$ such that $Q/A(Q)$ is a group.
It is clear that $$Q'=\Ng{[x,y,z],\,[x,y];\;x,y,z\in Q} \quad\text{ and }\quad A(Q)=\Ng{[x,y,z];\;x,y,z\in Q}.$$
Alternatively, we can use the associators and commutators defined in Section \ref{Sc:AC}, along the guidelines given by Lemma \ref{Lm:Factors}.

As in groups, it was shown by Bruck \cite[p. 13]{B} that, in fact,
$Q'=\langle [x,y,z],\,[x,y];\;x,y,z\in Q\rangle$. However, the case of
$A(Q)$ is more complicated: $\langle [x,y,z];\;x,y,z\in Q\rangle$
needs not be normal in $Q$; one has to consider its normal closure. We
are going to see that the normal closure is not needed upon replacing
the traditional associators/commutators with our associators/commutators.


\begin{lemma}\label{Lm:Q'}
Let $\V$ be a variety of loops, $\W$ a set of words generating inner mapping groups in $\V$, and $N\unlhd Q\in\V$. The following two conditions are equivalent:
\begin{enumerate}
	\item[(i)] $Q/N$ is an abelian group.
	\item[(ii)] $W_{\bar x}(z)/z\in N$ for every $W\in\W$, $\bar x$ a tuple over $Q$, and $z\in Q$.
\end{enumerate}
\end{lemma}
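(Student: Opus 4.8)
The plan is to reduce both conditions to a single statement about the inner mapping group of the quotient $Q/N$, exploiting the observation recorded after Proposition \ref{Pr:InnGens} that $\inn{P}=1$ if and only if $P$ is an abelian group. Since a variety is closed under quotients we have $Q/N\in\V$, so both that corollary and the hypothesis on $\W$ apply to the loop $Q/N$.

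First I would record how words behave under the quotient homomorphism $\pi:Q\to Q/N$. Because $\pi$ is a loop homomorphism it intertwines translations, $\pi(L_t(y))=L_{\pi(t)}(\pi(y))$ and likewise for $R$, $M$ and their inverses; iterating over the letters of a word gives $\pi(W_{\bar x}(z))=W_{\overline{\pi(x)}}(\pi(z))$ for every word $W$, where $\overline{\pi(x)}$ denotes the tuple obtained by applying $\pi$ coordinatewise. Since $a/b\in N$ is equivalent to $\pi(a)=\pi(b)$, condition (ii) says exactly that $W_{\overline{\pi(x)}}(\pi(z))=\pi(z)$ for all $W\in\W$, all tuples $\bar x$ over $Q$, and all $z\in Q$. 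As $\pi$ is surjective, $\pi(x)$ and $\pi(z)$ range over all of $Q/N$, so (ii) is equivalent to the assertion that every mapping $W_{\bar c}$, with $W\in\W$ and $\bar c$ a tuple over $Q/N$, is the identity on $Q/N$.

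Now I would invoke the hypothesis that $\W$ generates inner mapping groups in $\V$: applied to $Q/N\in\V$ it yields $\inn{Q/N}=\langle W_{\bar c};\;W\in\W,\ \bar c\text{ a tuple over }Q/N\rangle$. A group generated by a family of maps is trivial precisely when each generator is the identity, so the reformulation of (ii) from the previous paragraph is equivalent to $\inn{Q/N}=1$. Finally, by the corollary to Proposition \ref{Pr:InnGens}, $\inn{Q/N}=1$ holds if and only if $Q/N$ is an abelian group, which is condition (i). Chaining these equivalences closes the argument.

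I do not anticipate a genuine obstacle; the only point requiring care is the interplay between the two roles of $\W$ — as a generating set for $\inn{Q/N}$ on one hand, and as the indexing family appearing in (ii) on the other. Making the surjectivity of $\pi$ explicit, so that tuples over $Q/N$ are exactly the images of tuples over $Q$, is what glues these two viewpoints together, and it is the step I would state most carefully.
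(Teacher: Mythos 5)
Your proposal is correct and follows essentially the same route as the paper: the paper's proof also translates condition (ii) into the statement that every $W_{\bar x}$ is the identity on $Q/N$, uses the hypothesis that $\W$ generates $\inn{Q/N}$ to conclude this is equivalent to $\inn{Q/N}=1$, and then invokes the observation after Proposition \ref{Pr:InnGens} that a loop has trivial inner mapping group if and only if it is an abelian group. Your write-up merely makes explicit the bookkeeping (the intertwining of words with the quotient map $\pi$ and the surjectivity of $\pi$) that the paper leaves implicit.
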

\begin{proof}
Condition (ii) says that, in $Q/N$, $W_{\bar x}=1$ for every $W\in\W$ and $x_1,\dots,x_n\in Q/N$. Since the mappings $W_{\bar x}$ generate $\inn{Q/N}$, this is equivalent to the fact that $\inn{Q/N}=1$. This is equivalent to $Q/N$ being an abelian group, i.e., (i).
\end{proof}

\begin{theorem}\label{Th:Q'}
Let $\V$ be a variety of loops, $\W$ a set of words generating inner mapping groups in $\V$, and $Q\in\V$. Then
\begin{displaymath}
    Q'=\langle W_{\bar x}(z)/z;\;W\in\W,\,\bar x\text{ a tuple over }Q,\,z\in Q\rangle.
\end{displaymath}
\end{theorem}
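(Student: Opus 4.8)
The plan is to show that the subloop $D=\langle W_{\bar x}(z)/z;\;W\in\W,\,\bar x\text{ a tuple over }Q,\,z\in Q\rangle$ equals $Q'$ by establishing two inclusions. Recall that $Q'$ is the smallest normal subloop with abelian group quotient. My strategy hinges on Lemma \ref{Lm:Q'}, which characterizes, for a \emph{normal} subloop $N$, when $Q/N$ is an abelian group in terms of the generators $W_{\bar x}(z)/z$. The main subtlety, and the step I expect to be the principal obstacle, is that $D$ is defined as an ordinary subloop, so I cannot directly invoke Lemma \ref{Lm:Q'} until I know $D$ is normal. So the crux of the argument is a normality check for $D$.

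First I would prove that $D$ is normal in $Q$. The cleanest route is to observe that $D$ is exactly the congruence class of $1$ under the commutator $[\gamma_Q,\gamma_Q]=[1_Q,1_Q]$, or more directly, to recognize $D$ as $N_{[1_Q,1_Q]}$ via the Fundamental Theorem. Indeed, applying Theorem \ref{Th:FundamentalS} (or Theorem \ref{Th:NormComInf}) with $A=B=Q$, and recalling that $[1_Q,1_Q]=N_{[\gamma_Q,\gamma_Q]}$ corresponds to taking $a$ ranging over all of $Q$ and $\bar u/\bar v$ ranging over all of $Q$, one gets a normal subloop generated by elements $W_{\bar u}(a)/W_{\bar v}(a)$. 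To connect this to the simpler generators $W_{\bar x}(z)/z$, I would specialize $\bar v$ to the identity tuple, so that $W_{\bar v}(a)=a$ and the generator becomes $W_{\bar u}(a)/a$. This shows $D\subseteq [Q,Q]_Q$ as sets of generators, giving one half of normality for free since $[Q,Q]_Q$ is normal. The reverse requires more care: I must argue that $D$ already contains enough to be normal, or equivalently that the normal closure adds nothing.

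The more self-contained approach, and the one I would actually carry out, avoids the full commutator machinery. I would show the two inclusions directly. For $Q'\subseteq D$: if I can show $D$ is normal, then since every $W_{\bar x}(z)/z\in D$, Lemma \ref{Lm:Q'} (condition (ii) $\Rightarrow$ (i)) tells me $Q/D$ is an abelian group, whence $Q'\subseteq D$ by minimality of $Q'$. For $D\subseteq Q'$: applying Lemma \ref{Lm:Q'} to $N=Q'$ (which is normal with abelian group quotient, so satisfies (i), hence (ii)) gives $W_{\bar x}(z)/z\in Q'$ for every generator, and since $Q'$ is a subloop, $D\subseteq Q'$. Both inclusions therefore reduce entirely to the single hard point: \emph{$D$ is normal}.

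For the normality of $D$, the key observation is that a subloop is normal iff it is closed under $\inn Q$ (Proposition \ref{Pr:Inn}(ii)), and $\inn Q$ is generated by the mappings $W_{\bar x}$ with $W\in\W$. So it suffices to show that each generator $V_{\bar y}$ of $\inn Q$ sends a generator $W_{\bar x}(z)/z$ of $D$ back into $D$. The natural tool is Lemma \ref{Lm:CongrComm-generators} applied with $\alpha=\beta=1_Q$ (so $\delta=[1_Q,1_Q]$): the composition $V_{\bar y}W_{\bar x}$ is again an inner word, and its values $(V_{\bar y}W_{\bar x})(z)/z$ differ from $W_{\bar x}(z)/z$ only by applying $V_{\bar y}$, which one can track modulo $D$. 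Concretely, I would verify that $V_{\bar y}(W_{\bar x}(z)/z)$ and $V_{\bar y}(W_{\bar x}(z))/V_{\bar y}(z)$ are related through generators of $D$, using that $V_{\bar y}$ fixes $1$ and that both $V_{\bar y}W_{\bar x}$ and $V_{\bar y}$ are inner words whose $z$-values modulo $D$ can be pushed into $D$ via the defining generators. This bookkeeping, replacing an ad hoc normality computation by the uniform-generation Lemma \ref{Lm:free} together with Lemma \ref{Lm:CongrComm-generators}, is where the real work lies, and I expect it to be the delicate part of the write-up.
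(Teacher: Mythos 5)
Your overall skeleton agrees with the paper's: both inclusions do reduce, via Lemma \ref{Lm:Q'}, to the single claim that $D=\langle W_{\bar x}(z)/z;\;W\in\W,\,\bar x,\,z\rangle$ is normal in $Q$, and the paper also invokes Proposition \ref{Pr:Inn}(ii) at exactly that point. But your normality argument has two genuine gaps. First, you assert that it suffices to show that each generator $V_{\bar y}$ of $\inn{Q}$ sends each \emph{generator} $W_{\bar x}(z)/z$ of $D$ back into $D$. That reduction is valid only when inner mappings are homomorphisms: in a general loop $\inn{Q}\not\le\aut{Q}$, so $V_{\bar y}$ applied to a product of generators need not equal the product of their images, and closure of the generating set under $V_{\bar y}$ does not imply closure of the subloop $D$. (This is precisely why Proposition \ref{Pr:automorphic} is restricted to automorphic loops.) Second, the tool you propose for the bookkeeping, Lemma \ref{Lm:CongrComm-generators} with $\alpha=\beta=1_Q$, only yields relations modulo the congruence $\delta=[1_Q,1_Q]$, whose class of $1$ is the \emph{normal closure} of the elements $W_{\bar u}(a)/W_{\bar v}(a)$, i.e.\ $Q'$ itself. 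Since the whole issue is whether the plain subloop $D$ coincides with that normal closure, relations that hold only modulo $\delta$ cannot certify membership in $D$; as sketched, the argument is circular.

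The paper's proof avoids all of this with one observation you missed: the generating set of $D$ is indexed by \emph{all} $z\in Q$, so for an \emph{arbitrary} element $a\in D$ (not just a generator) the element $W_{\bar x}(a)/a$ is itself one of the generators of $D$; hence
\begin{displaymath}
    W_{\bar x}(a)=(W_{\bar x}(a)/a)\cdot a\in D,
\end{displaymath}
a product of two elements of the subloop $D$. This gives $W_{\bar x}(D)\subseteq D$ in one line, and the inverse direction is just as quick: if $b=W_{\bar x}^{-1}(a)$ with $a\in D$, then $a/b=W_{\bar x}(b)/b\in D$ is again a generator, so $b=(a/b)\ldiv a\in D$. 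Thus $W_{\bar x}(D)=D$ for every $W\in\W$ and every tuple $\bar x$, and normality follows from Proposition \ref{Pr:Inn}(ii) with no congruence machinery at all.
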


\begin{proof}
Let $H$ denote the subloop on the right hand side. In view of Lemma \ref{Lm:Q'}, it suffices to show that $H\unlhd Q$. Using Proposition \ref{Pr:Inn}(ii), we only need to check that $W_{\bar x}(H)=H$ for every $W\in\W$ and every tuple $\bar x$ over $Q$. For $a\in H$, we have $W_{\bar x}(a)/a\in H$ by definition, so $W_{\bar x}(a) = (W_{\bar x}(a)/a)\cdot a\in H$.
\end{proof}

Any choice of associators and commutators such that the set of the corresponding inner words generates inner mapping groups will provide a generating set for $Q'$. Here is such a choice, corresponding to the standard generating set of $\inn Q$. (The facts of Corollary \ref{Cr:Q'} were observed by Covalschi and Sandu in \cite{CS}.)

\begin{corollary}\label{Cr:Q'}\
\begin{enumerate}
	\item[(i)] Let $Q$ be a loop. Then
\begin{align*}
Q'&=\langle L_{x,y}(z)/z,\,R_{x,y}(z)/z,\,T_x(z)/z;\;x,y,z\in Q\rangle\\
&=\langle a^{\cdot\cdot}(x,y,z),\,b^{\cdot\cdot}(x,y,z),\,c^{\cdot}(x,y);\;x,y,z\in Q\rangle.
\end{align*}
	\item[(ii)] Let $Q$ be an inverse property loop. Then
$$Q'=\langle L_{x,y}(z)/z,\,T_x(z)/z;\;x,y,z\in Q\rangle=\langle a^{\cdot\cdot}(x,y,z),\,c^{\cdot}(x,y);\;x,y,z\in Q\rangle.$$
	\item[(iii)] Let $Q$ be a group. Then
$$Q'=\langle T_x(y)/y;\;x,y\in Q\rangle=\langle [x,y];\;x,y\in Q\rangle.$$
\end{enumerate}
\end{corollary}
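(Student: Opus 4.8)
The plan is to read all three parts as applications of Theorem~\ref{Th:Q'}: feed in a generating set $\W$ of $\inn{Q}$ for the relevant variety, which turns $Q'$ into $\langle W_{\bar x}(z)\rdiv z;\ W\in\W,\ \bar x\text{ over }Q,\ z\in Q\rangle$, and then rewrite each generator $W_{\bar x}(z)\rdiv z$ as the corresponding elementwise associator or commutator via the dictionary $A^{\cdot\cdot}_{y,z}=R_{z,y}$, $B^{\cdot\cdot}_{y,z}=L_{z,y}$, $C^{\cdot}_y=T_y^{-1}$ recorded just before Table~\ref{Tb:AC}. Passing between $W_{\bar x}(z)\rdiv z$, $z\ldiv W_{\bar x}(z)$ and the tabulated associator/commutator does not change the generated subloop (the subloop analogue of the immateriality observed at the beginning of Section~\ref{Sc:AC}), so the two displayed subloops in each part coincide once the first one is identified with $Q'$.

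For part (i) I would simply invoke Proposition~\ref{Pr:InnGens}, which gives that $\{L_{x,y},R_{x,y},T_x\}$ generates $\inn{Q}$ in every loop, and apply Theorem~\ref{Th:Q'} verbatim; the translation to $a^{\cdot\cdot}$, $b^{\cdot\cdot}$, $c^{\cdot}$ is then the dictionary above. Part (iii) is the same with $\{T_x\}$, which generates $\inn{Q}$ for groups by Example~\ref{Ex:GeneratingSets}; here one computes $T_x(y)\rdiv y = xyx^{-1}y^{-1}$, and since these elements range over the commutators (up to inversion and relabelling) the generated subgroup is the usual $\langle[x,y]\rangle$. No normal closure intervenes, because Theorem~\ref{Th:Q'} already produces a subloop that is automatically normal.

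The real work is part (ii), and the obstacle is that the excerpt records $\{L_{x,y},T_x,J\}$ only as a generating set of $\totinn{Q}$ for inverse property loops (Proposition~\ref{Pr:TotIP}), not $\{L_{x,y},T_x\}$ as a generating set of $\inn{Q}$, so Theorem~\ref{Th:Q'} cannot be quoted directly. Instead I would set $H=\langle L_{x,y}(z)\rdiv z,\ T_x(z)\rdiv z;\ x,y,z\in Q\rangle$ and prove $H=Q'$ by hand. The inclusion $H\subseteq Q'$ is immediate, since $L_{x,y}$ and $T_x$ act trivially modulo $Q'$ (associativity and commutativity hold in the abelian group $Q/Q'$), so each generator of $H$ lies in $Q'$. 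For the reverse inclusion it suffices to show $H\unlhd Q$: once this is known, triviality of all $L_{x,y}$ and $T_x$ on $Q/H$ forces $Q/H$ to be associative and commutative, i.e.\ an abelian group, whence minimality of $Q'$ gives $Q'\subseteq H$.

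Thus the heart of the matter is the normality of $H$, i.e.\ its invariance under $\inn{Q}=\langle L_{x,y},R_{x,y},T_x\rangle$ (Propositions~\ref{Pr:InnGens} and \ref{Pr:Inn}). Closure of $H$ under $L_{x,y}$ and $T_x$ is exactly the argument in the proof of Theorem~\ref{Th:Q'}: for $a\in H$ we have $W_{\bar x}(a)\rdiv a\in H$ by definition, hence $W_{\bar x}(a)=(W_{\bar x}(a)\rdiv a)\cdot a\in H$. The missing piece is closure under $R_{x,y}$, and this is where the inverse property enters. Since $H$ is a subloop it is closed under $\rdiv$, so for $h\in H$ we have $h^{-1}=1\rdiv h\in H$; that is, $H$ is invariant under the inversion map $J$. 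On the other hand the anti-automorphic inverse property gives $JL_xJ=R_x^{-1}$, whence $R_{x,y}=J\,L_{x^{-1},y^{-1}}\,J$. Reading this on $a\in H$: $J(a)=a^{-1}\in H$, then $L_{x^{-1},y^{-1}}(a^{-1})\in H$ by the $L$-closure just established, and applying $J$ once more keeps us inside $H$; hence $R_{x,y}(H)\subseteq H$. So $H$ is $\inn{Q}$-invariant and therefore normal. I expect this AAIP/inversion-closure step to be the only genuinely nonroutine point; the remaining bookkeeping (the $L\leftrightarrow R$ behaviour of $J$ and the passage to $a^{\cdot\cdot}$, $c^{\cdot}$ through Table~\ref{Tb:AC}) is mechanical.
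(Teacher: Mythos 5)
Your proof is correct, and for parts (i) and (iii) it coincides with the paper's: the paper's entire proof of this corollary is to apply Theorem \ref{Th:Q'} with the generating sets of Example \ref{Ex:GeneratingSets}, plus the remark that for $N\unlhd Q$ one has $a/b\in N$ iff $a\ldiv b\in N$, so that the side on which $W(z)$ is divided by $z$ --- and hence the passage to the associators and commutators of Table \ref{Tb:AC} --- is immaterial. Where you genuinely depart from the paper is part (ii), and your diagnosis is accurate: the paper implicitly invokes Theorem \ref{Th:Q'} with $\W=\{L_{x,y},T_x\}$, which presupposes that this set generates inner mapping groups (not merely total ones) in inverse property loops, a fact recorded nowhere in the paper; Proposition \ref{Pr:TotIP} and Example \ref{Ex:GeneratingSets} only give $\{L_{x,y},T_x,J\}$ as generators of $\totinn{Q}$, and the pruning of $J$ in Proposition \ref{Pr:Pruning}(i) is a statement about congruence generators (which become diagonal pairs), not about subloop generators, where $J$ would contribute elements $J(z)/z$ that are nontrivial already in abelian groups. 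Your substitute argument is sound: $H=\langle L_{x,y}(z)/z,\,T_x(z)/z\rangle$ is closed under all $L_{x,y}$ and $T_x$ by the argument of Theorem \ref{Th:Q'}, closed under $J$ because a subloop contains inverses, and hence closed under $R_{x,y}=JL_{x^{-1},y^{-1}}J$ (this identity does follow from the AAIP, which IP loops possess); so $H$ is invariant under the standard generators of $\inn{Q}$, therefore normal, and then $Q/H$ is an abelian group and minimality of $Q'$ finishes. Note that this is exactly the device the paper itself uses for the IP case of Corollary \ref{Cr:A(Q)}, via $a^{\cdot\cdot}(x,y,z)^{-1}=b^{\cdot\cdot}(x^{-1},y^{-1},z^{-1})$. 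In sum, the paper's route is a one-liner but rests on an unproved generation fact about $\inn{Q}$ for IP loops, while your route is longer but self-contained: it establishes part (ii) without ever having to decide whether $\inn{Q}=\langle L_{x,y},T_x\rangle$ holds in that variety.
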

\begin{proof}
Note that if $N\unlhd Q$ then $a/b\in N$ iff $a\ldiv b\in N$. It is therefore irrelevant on which side of the inner mappings $W(z)$ we divide by $z$.
\end{proof}

Notice that we have just recovered the classical result of group theory that $Q'$ is the subgroup generated by all commutators.

The case of the associator subloop is more difficult. A similar trick as above allows us to show that the subloop is preserved by the inner mappings $L_{x,y}$ and $R_{x,y}$, but it cannot be used for $T_x$, because $A(Q)$ does not contain commutators. The idea behind the proof of Theorem \ref{Th:A(Q)} comes from Leong \cite{L}, who proved a similar result with a different choice of associators, described in Section \ref{Sc:summary}. We will imitate his proof with our associators.

\begin{theorem}\label{Th:A(Q)}
Let $Q$ be a loop. Then $A(Q) = \langle a^{\cdot\cdot}(x,y,z)$, $b^{\cdot\cdot}(x,y,z);\;x,y,z\in Q\rangle$.
\end{theorem}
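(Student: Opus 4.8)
The plan is to mimic Leong's argument as adapted to the associators $a^{\cdot\cdot}$ and $b^{\cdot\cdot}$, which are already tied to the inner mappings $R_{x,y}=A^{\cdot\cdot}_{y,x}$ and $L_{x,y}=B^{\cdot\cdot}_{y,x}$ via Table~\ref{Tb:AC}. Write $H=\langle a^{\cdot\cdot}(x,y,z),\,b^{\cdot\cdot}(x,y,z);\;x,y,z\in Q\rangle$. By Lemma~\ref{Lm:Factors}(i), both $a^{\cdot\cdot}$ and $b^{\cdot\cdot}$ have the property that $\{a(x,y,z);\;x,y,z\in Q\}\subseteq N$ (equivalently for $b^{\cdot\cdot}$) exactly characterizes those normal $N$ with $Q/N$ a group. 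So the minimal such normal subloop $A(Q)$ certainly contains all these associators, giving $H\subseteq A(Q)$ once we know $H$ is normal, and conversely $A(Q)\subseteq H$ provided $H$ is normal and $Q/H$ is a group. Thus the entire theorem reduces to two facts: (1) $H\unlhd Q$, and (2) $Q/H$ is a group. Fact (2) is immediate from the definitions, since modulo $H$ every $a^{\cdot\cdot}(x,y,z)$ and $b^{\cdot\cdot}(x,y,z)$ equals $1$, which by Lemma~\ref{Lm:Factors}(i) forces associativity of $Q/H$.

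The real content is establishing normality of $H$, i.e.\ $f(H)=H$ for every $f\in\inn Q$ (Proposition~\ref{Pr:Inn}(ii)). By Proposition~\ref{Pr:InnGens}, $\inn Q$ is generated by $L_{x,y}$, $R_{x,y}$ and $T_x$, so it suffices to show that each generator maps $H$ into $H$. For $L_{x,y}$ and $R_{x,y}$ the clean approach is the one used in Theorem~\ref{Th:Q'}: since $L_{x,y}=B^{\cdot\cdot}_{y,x}$ and $R_{x,y}=A^{\cdot\cdot}_{y,x}$ are themselves the inner mappings attached to our associators, for any $h\in H$ we can write $L_{x,y}(h)=\bigl(L_{x,y}(h)/h\bigr)\cdot h$, and the quotient $L_{x,y}(h)/h=b^{\cdot\cdot}(h,x,y)$ is a generator of $H$; hence $L_{x,y}(h)\in H$, and symmetrically for $R_{x,y}$. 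The subtlety Leong confronted is that this argument fails for $T_x$, because $T_x(z)/z=c^{\cdot}(x,z)$ is a commutator, which need not lie in $A(Q)$.

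Handling $T_x$ is therefore the main obstacle, and this is exactly where Leong's idea enters. The strategy is to avoid closing $H$ under $T_x$ directly and instead argue that the subloop generated by the associators is \emph{already} normal because it is a block of a congruence whose factor is a group. Concretely, since $H$ is preserved by $L_{x,y}$ and $R_{x,y}$, one shows that the quotient $Q/H$ is well defined as a group: the associators vanish modulo $H$, so $Q/H$ is associative, hence a group, and in a group the commutators $c^{\cdot}(x,z)=[x,z]$ relating to $T_x$ are automatically controlled. More precisely, one verifies that the relation ``$u\equiv v$ iff $u/v\in H$'' is a congruence by checking compatibility with $\cdot$, $\ldiv$, $\rdiv$; the $L_{x,y}$- and $R_{x,y}$-invariance of $H$ supplies exactly the associativity-type closure needed to make this relation a congruence, and then $H=N_{\gamma_H}$ is normal by Proposition~\ref{Pr:Inn}(iv). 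Once $H\unlhd Q$ and $Q/H$ is a group, the two inclusions combine to give $H=A(Q)$. I expect the congruence-compatibility verification (showing $L_{x,y}, R_{x,y}$-closure of $H$ yields substitutivity of $\equiv$ under all three operations, thereby bypassing the need for $T_x$-closure) to be the delicate step, and it is precisely the place where replacing the classical associator $[x,y,z]$ with $a^{\cdot\cdot}$ and $b^{\cdot\cdot}$ pays off.
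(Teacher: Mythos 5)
Your reduction is sound and matches the paper's: $H\subseteq A(Q)$ holds unconditionally (since $Q/A(Q)$ is a group, Lemma \ref{Lm:Factors}(i) puts every $a^{\cdot\cdot}(x,y,z)$ and $b^{\cdot\cdot}(x,y,z)$ inside $A(Q)$, no normality of $H$ needed), and once $H\unlhd Q$ is known, Lemma \ref{Lm:Factors}(i) makes $Q/H$ a group, so minimality gives $A(Q)\subseteq H$. You also correctly locate the obstruction: the invariance trick of Theorem \ref{Th:Q'} handles $L_{x,y}$ and $R_{x,y}$ (modulo the cosmetic slip that the relevant quotient is $h\ldiv L_{x,y}(h)=b^{\cdot\cdot}(h,y,x)$, a left quotient with swapped parameters), but it cannot handle $T_x$, since $T_x(z)/z$ is a commutator and need not lie in $A(Q)$.

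However, your resolution of the $T_x$ obstruction is where the proof fails. It is circular as stated: you infer that ``$Q/H$ is associative, hence a group, and in a group the commutators relating to $T_x$ are automatically controlled,'' but speaking of $Q/H$, or of associators ``vanishing modulo $H$,'' presupposes that the relation $u\equiv v$ iff $u/v\in H$ is a congruence, which is exactly what you are trying to prove. The fallback you offer --- that $L_{x,y}$- and $R_{x,y}$-invariance of $H$ ``supplies exactly the closure needed'' for the compatibility check --- is false as a general principle: in a group every subgroup is invariant under all $L_{x,y}=R_{x,y}=\mathrm{id}$, yet $\gamma_H$ is a congruence only for \emph{normal} subgroups. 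The correct argument must use the stronger fact that $H$ contains \emph{all} associators (with arbitrary arguments from $Q$, not only from $H$), which yields the one-sided coset inclusions $Hx\cdot y\subseteq H\cdot xy$ and $x\cdot yH\subseteq xy\cdot H$; and even with these, a direct verification of compatibility of $\equiv$ with multiplication would require moving an element of $H$ across an arbitrary element of $Q$ (e.g.\ in a product $(h_1x)(h_2y)$), i.e.\ precisely the conjugation-type information that $T_x$-closure would provide --- so the direct check runs into the same wall. The paper escapes this circle with Leong's construction, which is the idea missing from your proposal: from the two coset inclusions one proves $(H/x)/y=H/(yx)$, lets $Q$ act on $X=\{H/x;\;x\in Q\}$ by $\alpha_x(H/y)=H/(xy)$, checks that $\alpha:Q\to\sym{X}$, $x\mapsto\alpha_x$, is a homomorphism into a group, and identifies $H=\ker(\alpha)$: the inclusion $H\subseteq\ker(\alpha)$ follows from Lemma \ref{Lm:Factors}(i) applied to the group $Q/\ker(\alpha)$, and $\ker(\alpha)\subseteq H$ follows by evaluating at $y=1$. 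Normality of $H$ is then automatic because kernels are normal. Without this (or an equivalent) construction, your argument does not close.
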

\begin{proof}
Write $a$ instead of $a^{\cdot\cdot}$, $b$ instead of $b^{\cdot\cdot}$, and let $H$ be the subloop on the right hand side. By Lemma \ref{Lm:Factors}, it suffices to show that $H\unlhd Q$.

For $h\in H$ we have $(hx)y = a(h,x,y)h\cdot xy\in H\cdot xy$ and $x\cdot yh = xy\cdot hb(h,y,x)\in xy\cdot H$, so
$$Hx\cdot y\subseteq H\cdot xy\quad\text{ and }\quad x\cdot yH\subseteq xy\cdot H.$$
We will use these inclusions freely.

We claim that
\begin{equation}\label{Eq:Cosets}
    (H/x)/y = H/(yx).
\end{equation}
Let $h\in H$. For one inclusion, we need to show that $((h/x)/y)\cdot yx\in H$. Now, $((h/x)/y)\cdot yx = ((h/x)/y)y\cdot xb((h/x)/y,y,x) \in ((h/x)/y)y\cdot xH = (h/x)\cdot xH\subseteq (h/x\cdot x)H = hH = H$. For the other inclusion, we need to show that $(h/(yx))y\cdot x\in H$. Now, $(h/(yx))y\cdot x = a(h/(yx),y,x)(h/(yx))\cdot yx \in (H\cdot h/(yx))\cdot yx \subseteq H((h/(yx))\cdot yx) = Hh = H$. We will use \eqref{Eq:Cosets} freely.

Let $X = \{H/x;\;x\in Q\}$. For $x\in Q$ define $\alpha_x\in \sym{X}$ by $$\alpha_x(H/y) = (H/y)/x = H/(xy).$$
If $H/y=H/z$ then $(H/y)/x = (H/z)/x$, so $\alpha_x$ is a well-defined mapping into $X$. If $(H/y)/x = (H/z)/x$ then $H/y = H/z$, so $\alpha_x$ is one-to-one. Finally, for any $y\in Q$ we have $\alpha_x(H/(x\ldiv y)) = H/(x\cdot x\ldiv y) = H/y$, so $\alpha_x$ is onto $X$.

Note that $y\cdot zx\in yz\cdot xH\subseteq (yz\cdot x)H$, so there is $h\in H$ such that $y\cdot zx = (yz\cdot x)h$. Consequently, $H/(y\cdot zx) = H/((yz\cdot x)h) = (H/h)/(yz\cdot x) = H/(yz\cdot x)$. Define $$\alpha:Q\to \sym{X},\quad x\mapsto\alpha_x.$$
Then $\alpha_{yz}(H/x) = H/(yz\cdot x) = H/(y\cdot zx) = (H/(zx))/y = ((H/x)/z)/y = \alpha_y\alpha_z(H/x)$, so $\alpha$ is a homomorphism into a group.
Let $K=\ker(\alpha)$. Then $Q/K \cong \img(\alpha)$ is a group, and so $H\subseteq K$ by Lemma \ref{Lm:Factors}. Given $x\in K$, we have $(H/y)/x = H/y$ for every $y\in Q$, in particular, with $y=1$ we get $H/x = H$, so $x\in H$, proving $K\subseteq H$. This means that $H=K\unlhd Q$.
\end{proof}

\begin{corollary}\label{Cr:A(Q)}
Suppose that $Q$ is a finite loop, or an inverse property loop, or a commutative loop. Then
$$A(Q) = \langle a^{\cdot\cdot}(x,y,z);\;x,y,z\in Q\rangle = \langle b^{\cdot\cdot}(x,y,z);\;x,y,z\in Q\rangle.$$
\end{corollary}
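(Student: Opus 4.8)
The plan is to reduce everything to Theorem \ref{Th:A(Q)}, which already gives $A(Q)=\langle a^{\cdot\cdot}(x,y,z),\,b^{\cdot\cdot}(x,y,z);\;x,y,z\in Q\rangle$. Since the subloop generated by two equal subloops is again that subloop, it suffices to prove in each of the three cases that $\langle a^{\cdot\cdot}\rangle=\langle b^{\cdot\cdot}\rangle=A(Q)$; as $\langle a^{\cdot\cdot}\rangle,\langle b^{\cdot\cdot}\rangle\subseteq A(Q)$ always holds, the content lies in the reverse inclusions. I expect the commutative and inverse property cases to be short, while the finite case will be the real work: there the difficulty is that the one-sided family $\{a^{\cdot\cdot}\}$ only controls one side of the cosets of $H:=\langle a^{\cdot\cdot}\rangle$, so normality of $H$ (equivalently, closure of $H$ under the remaining inner mappings $T_x$ and $B^{\cdot\cdot}_{x,y}$) is not visible from the generators.

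For the commutative case I would compare defining identities. When $Q$ is commutative, $L_x=R_x$, so $x\ldiv w=w/x$, and the identities $(xy)z=A^{\cdot\cdot}_{y,z}(x)\cdot(yz)$ and $z(yx)=(zy)\cdot B^{\cdot\cdot}_{y,z}(x)$ coincide after commuting factors; cancelling yields $A^{\cdot\cdot}_{y,z}(x)=B^{\cdot\cdot}_{y,z}(x)$ and hence $a^{\cdot\cdot}(x,y,z)=b^{\cdot\cdot}(x,y,z)$ for all $x,y,z$, so $\langle a^{\cdot\cdot}\rangle=\langle b^{\cdot\cdot}\rangle$ and Theorem \ref{Th:A(Q)} finishes. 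For the inverse property case I would use the anti-automorphism $J$. Applying $J$ to $(xy)z=(a^{\cdot\cdot}(x,y,z)\,x)(yz)$ and invoking the AAIP $(uv)^{-1}=v^{-1}u^{-1}$ rewrites this, after the substitution $p=x^{-1},q=y^{-1},r=z^{-1}$, into the defining identity of $b^{\cdot\cdot}$ for $p,q,r$; the outcome is the clean relation $b^{\cdot\cdot}(x^{-1},y^{-1},z^{-1})=\bigl(a^{\cdot\cdot}(x,y,z)\bigr)^{-1}$. Since $x\mapsto x^{-1}$ is a bijection of $Q$ and subloops of inverse property loops are closed under inversion, the set of $b$-associators is exactly the set of inverses of $a$-associators, whence $\langle a^{\cdot\cdot}\rangle=\langle b^{\cdot\cdot}\rangle=A(Q)$.

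For the finite case I would prove directly that $H:=\langle a^{\cdot\cdot}(x,y,z);\,x,y,z\in Q\rangle$ is normal, which suffices: once $H\unlhd Q$, Lemma \ref{Lm:Factors}(i) makes $Q/H$ a group, forcing $A(Q)\subseteq H$, and the reverse inclusion is automatic. The starting observation is that $A^{\cdot\cdot}_{y,z}(h)=a^{\cdot\cdot}(h,y,z)\cdot h\in H$ for every $h\in H$, so $H$ is closed under each bijection $A^{\cdot\cdot}_{y,z}$; finiteness upgrades this to the \emph{equalities} $A^{\cdot\cdot}_{y,z}(H)=H$, i.e.\ $(Hy)z=H(yz)$ for all $y,z$. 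Because the generators range over all first arguments, one also has $A^{\cdot\cdot}_{y,z}(x)\in Hx$ for every $x\in Q$, and combined with the previous identity this yields mod-$H$ associativity $H((xy)z)=H(x(yz))$. From these I would show that the right cosets $\{Hq\}$ partition $Q$ (using $Hh=H$ for $h\in H$ and $H(ha)=Ha$), that right multiplication descends to a well-defined bijection $\rho_b\colon Hq\mapsto H(qb)$ of the quotient set $Q/H$, and that $\rho_{bc}=\rho_c\rho_b$. Then $b\mapsto\rho_b$ is a loop homomorphism of $Q$ into the group $(\sym{Q/H})^{\mathrm{op}}$; its kernel is contained in $H$ (evaluate at the coset of $1$), and since the quotient by the kernel embeds in a group, Lemma \ref{Lm:Factors}(i) forces $H$ into the kernel, giving $\ker\rho=H$ normal. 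Finally $\langle b^{\cdot\cdot}\rangle=A(Q)$ follows by applying the $a$-case to the (also finite) opposite loop $Q^{\mathrm{op}}$, using $b^{\cdot\cdot}_{Q}=a^{\cdot\cdot}_{Q^{\mathrm{op}}}$ and $A(Q)=A(Q^{\mathrm{op}})$.

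The step I expect to be most delicate is the passage, in the finite case, from the one-sided data $(Hy)z=H(yz)$ to genuine normality of $H$. One-sided coset compatibility does not by itself make the right-coset partition a congruence; the missing ingredient is precisely the left-compatibility equivalent to $T_x(H)\subseteq H$, which the generators do not supply. Packaging the available identities into the homomorphism $b\mapsto\rho_b$ and reading off normality from $\ker\rho=H$ is the device that circumvents this, and checking that $\rho$ is well defined and multiplicative is where the care is needed; the remaining verifications are routine bookkeeping.
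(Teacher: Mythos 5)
Your proposal is correct and takes essentially the same route as the paper: the commutative and inverse property cases rest on exactly the identities $a^{\cdot\cdot}(x,y,z)=b^{\cdot\cdot}(x,y,z)$ and $a^{\cdot\cdot}(x,y,z)^{-1}=b^{\cdot\cdot}(x^{-1},y^{-1},z^{-1})$ plus Theorem \ref{Th:A(Q)}, and the finite case uses finiteness to upgrade a one-sided coset inclusion to an equality and then reruns the permutation-representation argument from the proof of Theorem \ref{Th:A(Q)}. The only difference is a mirror image: you work with the $a$-associators and right cosets $Hq$, obtaining the $b$-case via the opposite loop, whereas the paper works with $b=b^{\cdot\cdot}$ and the cosets $H/x$, citing the last two paragraphs of that proof verbatim.
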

\begin{proof}
If $Q$ is an inverse property loop, observe that $a^{\cdot\cdot}(x,y,z)^{-1} = b^{\cdot\cdot}(x^{-1},y^{-1},z^{-1})$. If $Q$ is commutative, we have $a^{\cdot\cdot}(x,y,z) = b^{\cdot\cdot}(x,y,z)$. In either case, we are done by Theorem \ref{Th:A(Q)}.

Now suppose that $Q$ is a finite loop, and let us focus on the associator $b = b^{\cdot\cdot}$. Let $H=\langle b(x,y,z);\;x,y,z\in Q\rangle$. As in the proof of Theorem \ref{Th:A(Q)}, we have $x\cdot yH\subseteq xy\cdot H$. Since the two sets have the same cardinality, we have $x\cdot yH = xy\cdot H$ by finiteness. Using this inclusion with $h\in H$, we have $((h/x)/y)\cdot yx \in ((h/x)/y)y\cdot xH = (h/x)\cdot xH\subseteq (h/x\cdot x)H = hH = H$, so $(H/x)/y\subseteq H/(yx)$, and thus $(H/x)/y = H/(yx)$ by finiteness. Then the last two paragraphs of the proof of Theorem \ref{Th:A(Q)} go through word for word.
\end{proof}

Our choice of associators and commutators for Theorem \ref{Th:A(Q)} is certainly not the only choice; in the end, Leong used a different associator $b$.

\begin{problem}
Determine all minimal subsets $\mathcal A$ of the eight types of associators of Lemma \ref{Lm:Factors}(i) such that for every loop $Q$ we have $A(Q) = \langle a(x,y,z);\;a\in \mathcal A$, $x$, $y$, $z\in Q\rangle$.
\end{problem}

Just like in group theory, it is reasonable to consider the smallest normal subloop $N$ such that $Q/N$ is a commutative loop. Clearly, $N=\Ng{[x,y];\;x,y\in Q}$. Can we possibly avoid the normal closure? Let
\begin{displaymath}
    \mathrm{Comm}(Q) = \langle c^{\cdot}(x,y),\,d^{\cdot}(x,y),\,c^{\ldiv}(x,y),\,d^{\rdiv}(x,y);\;x,y\in Q\rangle.
\end{displaymath}
It is easy to check that $c^{\ldiv}(x,y)=c^{\cdot}(x,x\ldiv y)$ and $d^{\rdiv}(x,y)=d^{\cdot}(x,y\rdiv x)$,
hence, in fact, $\mathrm{Comm}(Q) = \langle c^{\cdot}(x,y),\,d^{\cdot}(x,y);\;x,y\in Q\rangle$. Unfortunately, $\mathrm{Comm}(Q)$ is not necessarily normal in $Q$, as witnessed by Example \ref{Ex:Comm}.

\begin{problem}
Is there a loop $Q$ such that $\langle a^{\cdot\cdot}(x,y,z);\;x,y,z\in Q\rangle \ne \langle b^{\cdot\cdot}(x,y,z);\;x,y,z\in Q\rangle$? Is there a loop $Q$ such that $\langle c^\cdot(x,y);\;x,y\in Q\rangle\ne \langle d^\cdot(x,y);\;x,y\in Q\rangle$?
\end{problem}

\section{Examples and counterexamples}\label{Sc:Examples}

\def\ordiv{\oslash}
\def\oldiv{\,\rotatebox[origin=c]{90}{$\ordiv$}\,}

All examples in this section will be based on a general construction, inspired by the example in \cite[Chapter 5, Exercise 10]{FM}.

\begin{construction}\label{FMexample}
Let $(G,+)$ be an abelian group and let $(G,\oplus)$ be a quasigroup. We define $Q=G[\oplus]$ to be the loop on $G\times \Z_2$ with multiplication
\begin{displaymath}
    (x,a)(y,b)=\left\{
	\begin{array}{ll}
		(x+y,a+b)  & \text{if $a=0$ or $b=0$,}\\
		(x\oplus y,0) & \text{otherwise.}
	\end{array}\right.
\end{displaymath}
\end{construction}

\emph{Properties.} The set $H=G\times\{0\}$ forms a normal subloop isomorphic to $(G,+)$, since it is the kernel of the projection $Q\to\Z_2$, $(x,a)\mapsto a$. Hence $Q$ possesses a chain of normal subloops $0\leq H\leq Q$ such that each factor is an abelian group. Consequently, $Q'\leq H$ and $Q$ is solvable (in the sense of Bruck).

\emph{Notational remarks.}
For $x\in Q$, we will implicitly assume $x=(x_0,x_1)$. For brevity, for $a\in H$ and $k\in\mathbb N$, let $ka=(ka_0,0)$. The division operations with respect to $\oplus$ will be denoted by $\oldiv$ and $\ordiv$. The identity element $(0,0)$ of $Q$ will be denoted by $0$.

\emph{Inner mappings.} Since $Q'\subseteq H$, to determine congruence solvability and nilpotency we will need to calculate commutators $[A,B]_Q$ for $A$, $B\unlhd Q$ such that $A\subseteq H$. Hence, we need to determine the values of inner mappings on the elements of $H$.
For every $a,b\in H$, $x,y\in Q\smallsetminus H$ and every $z\in Q$, we get
\begin{align*}
	T_z(a)&=a\quad\text{and}\quad U_z(a)=-a,\\
	L_{b,z}(a)&=L_{z,b}(a)=R_{b,z}(a)=R_{z,b}(a)=a\quad\text{and}\quad M_{z,b}(a)=M_{x,z}(a)=-a,\\
	L_{x,y}(a)&=((x_0\oplus(y_0+a_0))-(x_0\oplus y_0),0),\\
	R_{x,y}(a)&=(((y_0+a_0)\oplus x_0)-(y_0\oplus x_0),0),\\
	M_{b,x}(a)&=((x_0\oldiv b_0)-((x_0-a_0)\oldiv b_0),0).
\end{align*}
It follows from Theorem \ref{Th:FundamentalS} and Proposition \ref{Pr:TotInnGens} that
\begin{displaymath}
    [A,B]_Q = \Ng{ W_{u_1,u_2}(a)/W_{v_1,v_2}(a);\;W\in\{L,R,M\},\,a\in A,\,\bar u/\bar v\in B }
\end{displaymath}
for every $A$, $B\unlhd Q$ such that $A\subseteq H$.

Also note that $c^\cdot(z,a)=1$ and $a^{\cdot\cdot}(a,b,z)=a^{\cdot\cdot}(a,z,b)=b^{\cdot\cdot}(a,b,z)=b^{\cdot\cdot}(a,z,b)=1$ whenever $a,b\in H$ and $z\in Q$.
\bigskip

The next three examples show how to use our theory to efficiently calculate commutators and derived subloops, and also illustrate the variety of options that Construction \ref{FMexample} offers. Imitating the notation for congruences, set
\begin{displaymath}
    Q^{(0)}=Q_{(0)}=Q, \qquad Q_{(i+1)}=[Q_{(i)},Q]_Q, \qquad Q^{(i+1)}=[Q^{(i)},Q^{(i)}]_Q.
\end{displaymath}
Note that $Q^{(1)}=Q_{(1)}=Q'$. A loop $Q$ is called \emph{centrally nilpotent} if $Q_{(n)}=1$ for some $n$, and it is called
\emph{congruence solvable} if $Q^{(n)}=1$ for some $n$.


\begin{example}\label{Ex:minus}
Let $(G,+)$ be an abelian group. Consider the loop $Q=G[-]$, i.e., $x\oplus y=x-y$, the subtraction in $G$. In general we obtain a non-commutative non-associative loop. It is easy to check that $x\oldiv y=x-y$ and $x\ordiv y=x+y$. For $n\in\mathbb N$, let $H_n=2^nG\times\{0\}$, where $mG=\{mg;\;g\in G\}$, and notice that this is a subloop of $Q$.

Let $a,b\in H$ and $x,y\in Q\smallsetminus H$.
Using the general expressions above, we see that $L_{x,y}(a)=((x_0-(y_0+a_0))-(x_0-y_0),0)=(-a_0,0)=-a$, and thus also $L_{x,y}^{-1}(a)=-a$. Similar computations yield $R_{x,y}(a)=a$ and $M_{b,x}(a)=a$, and the remaining inner mappings are also identical or inverse mappings on $H$.
Consequently, every subloop of $H$ is normal in $Q$ (in particular, $H_n\unlhd Q$), and
\begin{displaymath}
    [A,B]_Q = \Ng{ L_{u_1,u_2}(a)/L_{v_1,v_2}(a);\;\,a\in A,\,\bar u/\bar v\in B }
\end{displaymath}
for every $A,B\unlhd Q$ such that $A\subseteq H$.

Let us calculate the derived subloop of $Q=G[-]$. Note that $a^{\cdot\cdot}(x,y,z)$, $b^{\cdot\cdot}(x,y,z)$, and $c^\cdot(x,y)$, evaluated in $Q$, are expressions with an even number of occurences of $x_0,y_0,z_0$, each with a positive or negative sign. For example,
\begin{displaymath}
    c^\cdot((x_0,1),(y_0,1))=((y_0\oldiv(x_0\oplus y_0))-x_0,0)=((y_0-(x_0- y_0))-x_0,0)=(2y_0-2x_0,0).
\end{displaymath}
Consequently, the result is always in $H_1=2G\times\{0\}$. On the other hand, every element $2a\in H_1$ can be expressed, for example, by $2a=(2a_0,0)=c^\cdot((a,1),(0,1))$. We see that $Q'=H_1$.

Now, let us have a look at the commutator. If both $A,B\subseteq H$, then $[A,B]_Q\subseteq[H,H]_Q$. But $[H,H]_Q=0$: if $u_i/v_i\in H$, then $u_i\in H$ iff $v_i\in H$, hence
either $L_{u_1,u_2}(a)/L_{v_1,v_2}(a)=a/a=0$, or $L_{u_1,u_2}(a)/L_{v_1,v_2}(a)=(-a)/(-a)=0$. In particular, $Q^{(2)}=0$ and $Q$ is congruence solvable.

On the other hand, $[A,Q]_Q$ may not vanish, since $L_{(0,1),(0,1)}(a)/L_{(0,0),(0,0)}(a)=(-a)/a=-2a$, hence $[A,Q]_Q=2A$.
Consequently, we have $Q_{(n)}=H_n$ for every $n\geq1$. If $|G|$ is odd, we have $Q_{(n)}=H$ for every $n$, and $Q$ is not centrally nilpotent. If $G=\Z$, for instance, we obtain a strictly decreasing chain with trivial intersection, hence $Q$ is not centrally nilpotent (this situation is sometimes refered to as transfinite nilpotency). If $|G|$ is a power of two, then $2^nG=0$ for some $n$, and thus $Q$ is centrally nilpotent.
\end{example}

Example \ref{Ex:minus} also shows an obstacle to removing quotients from the generating set of the commutator, as described in Corollary \ref{Cr:FundamentalS}. Let $A,B\unlhd G[\oplus]$ be such that $A\subseteq H$. On one hand, all associators and commutators with at least one parameter from $A$ and one parameter from $B$ vanish. On the other hand, associators with one parameter in $A$ (or $B$) and other parameters arbitrary may not belong to $[A,B]_Q$. For instance, in $G[-]$ with $A=B=H$, we have $[H,H]_Q=0$, but $b^{\cdot\cdot}(a,x,y)=a\ldiv L_{y,x}(a)=-2a$, for every $a\in H$ and $x,y\in Q\smallsetminus H$.

The next two examples show a particular choice of $G$ and $\oplus$ such that the subloop $H$, which itself is an abelian group, is not abelian in $G[\oplus]$.
In Example \ref{Ex:Z4}, $G[\oplus]$ is not congruence solvable. This is a rather typical situation, resulting from most combinations of $G$ and $\oplus$.
In Example \ref{Ex:Z22}, $G[\oplus]$ is congruence solvable.

\begin{example}\label{Ex:Z4}
Consider the loop $Q=\Z_4[\oplus]$, where the operation $\oplus$ is given by the following multiplication table:
$$\begin{array}{c|cccc}
 & 0&1&2&3 \\ \hline 0&0&1&2&3\\1&1&3&0&2\\2&2&0&3&1\\3&3&2&1&0
\end{array}$$
Notice that $(\{0,1,2,3\},\oplus)\cong\Z_4$. We will show that $Q'=H$ and that $Q^{(2)}=Q_{(2)}=H$, hence $Q$ is not congruence solvable.

Observe that $L_{(0,1),(0,1)}((1,0))=(1,0)$ and $L_{(1,1),(0,1)}((1,0))=(2,0)$. It follows that $b^{\cdot\cdot}((1,0),(0,1),(1,1))=(1,0)\ldiv L_{(1,1),(0,1)}((1,0))=(1,0)\in Q'$, and thus $H=\langle(1,0)\rangle\subseteq Q'\subseteq H$, hence $Q'=H$.
Furthermore, $L_{(1,1),(0,1)}((1,0))/L_{(0,1),(0,1)}((1,0))=(1,0)\in[H,H]_Q$, hence $H=\langle(1,0)\rangle\subseteq [H,H]_Q\subseteq[H,Q]_Q\subseteq H$, so $[H,H]_Q=[H,Q]_Q=H$.

According to \texttt{GAP}, the total multiplication group $\totmlt Q$ is solvable. Hence, Vesanen's theorem \cite{V} does not strengthen to congruence solvability.
\end{example}

\begin{example}\label{Ex:Z22}
Consider the loop $Q=\Z_4[\oplus]$, where the operation $\oplus$ is given by the following multiplication table:
$$\begin{array}{c|cccc}
 & 0&1&2&3 \\ \hline 0&0&1&2&3\\1&1&0&3&2\\2&2&3&0&1\\3&3&2&1&0
\end{array}$$
Notice that $(\{0,1,2,3\},\oplus)\cong\Z_2\times\Z_2$ and that
\begin{displaymath}
    r\oplus s=\left\{
	\begin{array}{ll}
		r+s+2  & \text{if } r,s\in\{1,3\},\\
		r+s & \text{otherwise.}
	\end{array}\right.
\end{displaymath}
Consequently, we can write $r\oplus s=r+s+\varepsilon$ where $\varepsilon\in\{0,2\}$, where $\varepsilon=2$ iff $r$, $s\in\{1,3\}$, and similarly for the division operations $\oldiv$, $\ordiv$. Let $K=\{0,2\}\times\{0\}$. We will show that $Q'=[H,H]_Q=K$, and that $Q^{(2)}=Q_{(2)}=0$. Hence, $Q$ is centrally nilpotent, although $H$ is not abelian in $Q$.

$Q$ is finite and commutative, so we can focus on $L_{x,y}$. For $a\in H$ and $x,y\in Q\smallsetminus H$, we have
$$L_{x,y}(a)=((x_0\oplus(y_0+a))-(x_0\oplus y_0),0)=(x+(y+a)+\varepsilon_1-(x+y+\varepsilon_2),0)=(\varepsilon_1-\varepsilon_2,0)\in K.$$
We immediately see that $K$ is normal in $Q$ and that $[H,H]_Q\subseteq K$. Since
$$L_{(1,1),(0,1)}(1,0)/L_{(0,1),(0,1)}(1,0)=(3,0)/(1,0)=(2,0)\in[H,H]_Q,$$
we obtain $[H,H]_Q=K$. Furthermore, if $a\in K$, then $\varepsilon_1=\varepsilon_2$, and thus $[K,K]_Q\subseteq[K,Q]_Q=0$.
To show that $Q'=K$, calculate the associator $a^{\cdot\cdot}(x,y,z)=((xy\cdot z)/(yz))/x=((((x_0+y_0+\varepsilon_1)+z_0+\varepsilon_2)-(y_0+z_0+\varepsilon_3)+\varepsilon_4)-x+\varepsilon_5,0)=(\varepsilon_1+\varepsilon_2-\varepsilon_3+\varepsilon_4+\varepsilon_5,0)\in K$, so $Q'\subseteq K$.
Since $Q$ is not associative, we get $Q'=K$.
\end{example}

The distinction between ``abelianess'' and ``abelianess in $Q$'' persists even in varieties of loops that are very close to groups. For instance, let $Q$ be the left Bol loop of order $8$ from Example \ref{Ex:TotInnGens} (catalog number \texttt{LeftBolLoop(8,1)} in the \texttt{LOOPS} package for \texttt{GAP}), and let $N=\{1,2,3,4\}$. Then $N\unlhd Q$, $N$ is an abelian group, but $[N,N]_Q=Z(Q)=\{1,2\}$, so $N$ is not abelian in $Q$. Nevertheless, $Q$ is congruence solvable. A similar situation occurs in the Moufang loop of order $16$ with catalog number \texttt{MoufangLoop(16,4)} for one of its normal subloops isomorphic to $\Z_2\times\Z_4$.

Finally, we present an example of a loop where $\mathrm{Comm}(Q)$ is not normal.

\begin{example}\label{Ex:Comm}
Consider the loop $Q=\Z_4[\oplus]$, where the operation $\oplus$ is given by
\begin{displaymath}
    \begin{array}{c|cccc}
        \oplus&0&1&2&3\\
        \hline
        0&0&2&3&1\\
        1&1&0&2&3\\
        2&3&1&0&2\\
        3&2&3&1&0
    \end{array}
\end{displaymath}
One can check that $\mathrm{Comm}(Q) = \{(0,0),(2,0)\}$, which is not a normal subloop of $Q$.
\end{example}

\section{Center and nilpotency, abelianess and solvability}\label{Sc:univalg}

The purpose of this section is to supply details for the exposition in Section \ref{Sc:summary}.

Recall the definitions of the commutator $[\alpha,\beta]$, the center $\zeta(\A)$ and abelianess in $\A$.
First, we show how $[\alpha,1_\A]$ relates to the center, and how $[\alpha,\alpha]$ relates to abelianess.

Let $\alpha$ be a congruence of an algebra $\A$.
By definition, $[\alpha,1_\A]$ is the smallest congruence $\delta$ such that $C(\alpha,1_\A;\delta)$ in $\A$, or equivalently,
$C(\alpha/\delta,1_{\A/\delta};0_{\A/\delta})$ in $\A/\delta$. The definition of the center says $C(\xi,1_\B;0_\B)$ in $\B$ iff $\xi\leq\zeta(\B)$. Applied to $\B=\A/\delta$, we see that $[\alpha,1_\A]$ is the smallest $\delta$ such that $\alpha/\delta\leq\zeta(\A/\delta)$.

By definition, $[\alpha,\alpha]$ is the smallest congruence $\delta$ such that $C(\alpha,\alpha;\delta)$ in $\A$, or equivalently,
$C(\alpha/\delta,\alpha/\delta;0_{\A/\delta})$ in $\A/\delta$. The latter says that the congruence $\alpha/\delta$ is abelian in $\A/\delta$.

\subsection{In loops}

Recall that the center $Z(Q)$ of a loop $Q$ is defined in loop theory as
\begin{displaymath}
    Z(Q) = \{a\in Q;\;ax{=}xa,\,a(xy){=}(ax)y,\,x(ay){=}(xa)y,\,x(ya){=}(xy)a\text{ for every }x,y\in Q\}.
\end{displaymath}
Theorem \ref{Th:Center} shows that $Z(Q)$ and $\zeta(Q)$ define the same concept. For groups, the proof can be found in \cite[Section II.13]{BS}, for instance, and it easily extends to loops. For the sake of completeness (and because we are not aware of a proof in the literature), we present a complete proof here. It is instructive to read the proof to become accustomed to the universal algebraic approach to loop theory.

\begin{theorem}\label{Th:Center}
If $Q$ is a loop, then $Z(Q) = N_{\zeta(Q)}$.
\end{theorem}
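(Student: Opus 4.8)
The statement asserts $Z(Q)=N_{\zeta(Q)}$, where $N_{\zeta(Q)}=\{a\in Q;\;a\,\zeta(Q)\,1\}$. The plan is to prove the two inclusions separately, each by exploiting the definition of $\zeta(Q)$ as the \emph{largest} congruence satisfying $C(\zeta(Q),1_Q;0_Q)$. Throughout I use that $a\,\gamma_N\,b$ means $a\rdiv b\in N$, and that a central element $z$ gives bijective translations $L_z,R_z$ and has a central two-sided inverse $z^{-1}$, so that $z^{\pm1}$ can be freely moved through products and divisions and cancelled.

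For $Z(Q)\subseteq N_{\zeta(Q)}$, first I would check that $Z(Q)$ is a normal subloop, so that $\gamma_{Z(Q)}$ is a genuine congruence. By Proposition~\ref{Pr:InnGens}, $\inn Q$ is generated by $L_{x,y}$, $R_{x,y}$, $T_x$, and a direct computation from the defining equations of $Z(Q)$ shows that each generator fixes every $z\in Z(Q)$ (for instance $L_{x,y}(z)=(xy)\ldiv(x(yz))=(xy)\ldiv((xy)z)=z$); hence every inner mapping fixes $Z(Q)$ pointwise and Proposition~\ref{Pr:Inn} gives normality. Then I would establish $C(\gamma_{Z(Q)},1_Q;0_Q)$, which by the maximality of $\zeta(Q)$ forces $\gamma_{Z(Q)}\le\zeta(Q)$ and hence $Z(Q)=N_{\gamma_{Z(Q)}}\subseteq N_{\zeta(Q)}$. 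By Lemma~\ref{Lm:TC} it suffices to verify the term condition for terms $t$ slim in the first variable, i.e. with a single occurrence of $x_1$. The key step is a pull-out lemma: if $a\,\gamma_{Z(Q)}\,b$, say $a=zb$ with $z\in Z(Q)$, then walking up the unique branch from the leaf $x_1$ to the root and repeatedly using centrality yields $t(zb,\bar w)=z^{\varepsilon}\,t(b,\bar w)$ for a fixed $\varepsilon\in\{1,-1\}$ (the exponent stays $\pm1$, flipping sign only when the $z$-carrying subterm is the left argument of a $\ldiv$ or the right argument of a $\rdiv$). Since $z^{\varepsilon}$ is central and cancellable, $t(zb,\bar u)=t(zb,\bar v)$ is equivalent to $t(b,\bar u)=t(b,\bar v)$, so the term condition holds.

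For $N_{\zeta(Q)}\subseteq Z(Q)$, take $a$ with $a\,\zeta(Q)\,1$ and apply the term condition of $C(\zeta(Q),1_Q;0_Q)$ to the pair $(1,a)$: whenever a term $s(w,\bar y)$ satisfies $s(1,\bar y)=1$ identically and $s(a,\bar 1)=1$, choosing $\bar u=\bar y$ and $\bar v=\bar 1$ forces $s(a,\bar y)=s(a,\bar 1)=1$ for all $\bar y$. I would apply this to the four terms $s_1(w,y)=(wy)\ldiv(yw)$, $s_2(w,y_1,y_2)=(w(y_1y_2))\ldiv((wy_1)y_2)$, $s_3(w,y_1,y_2)=(y_1(wy_2))\ldiv((y_1w)y_2)$, and $s_4(w,y_1,y_2)=(y_1(y_2w))\ldiv((y_1y_2)w)$; each collapses to $u\ldiv u=1$ when $w=1$, and to $a\ldiv a=1$ when the remaining arguments are $1$, so the conclusions $ax=xa$, $a(xy)=(ax)y$, $x(ay)=(xa)y$, $x(ya)=(xy)a$ are precisely the four defining conditions for $a\in Z(Q)$.

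I expect the pull-out lemma to be the main obstacle, or at least the only step with real content: one must treat all three operations $\cdot$, $\ldiv$, $\rdiv$ and both the left and right positions of the $z$-carrying subterm at each node, checking in each case that a central factor is extracted with the claimed sign. Reducing to slim (single-occurrence) terms via Lemma~\ref{Lm:TC} is what keeps this clean, as it replaces a product of many toggling powers of $z$ by a single $z^{\varepsilon}$; the remaining ingredients (normality of $Z(Q)$ and the four term evaluations in the second inclusion) are routine.
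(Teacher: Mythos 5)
Your proof is correct, and its skeleton agrees with the paper's: the inclusion $Z(Q)\subseteq N_{\zeta(Q)}$ comes from the maximality of $\zeta(Q)$ among congruences $\xi$ satisfying $C(\xi,1_Q;0_Q)$, and the reverse inclusion from instantiating that term condition with explicit terms. The execution differs in two instructive ways. First, the paper never invokes Lemma~\ref{Lm:TC}: it pulls $z=b/a\in Z(Q)$ out of an \emph{arbitrary} term in one step, asserting $t(z\cdot a,\bar u)=z\cdot t(a,\bar u)$. Your sign analysis shows this identity is not literally correct once divisions are involved (e.g.\ $(za)\ldiv c=z^{-1}(a\ldiv c)$); the paper's argument survives anyway because whatever power of $z$ gets extracted is still central and cancels, but your sign-tracked pull-out $t(zb,\bar w)=z^{\varepsilon}t(b,\bar w)$ is the precise statement. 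In fact you could dispense with Lemma~\ref{Lm:TC} entirely by allowing $\varepsilon\in\Z$, one contribution $\pm1$ per occurrence of $x_1$, since the cancellation at the end is insensitive to the exponent. Second, for $N_{\zeta(Q)}\subseteq Z(Q)$ your terms are genuinely different: your four comparison terms such as $(wy)\ldiv(yw)$, which vanish identically at $w=1$ and are used with $\bar v=\bar 1$, deliver all four defining conditions of $Z(Q)$ uniformly and directly, whereas the paper obtains commutativity from $x(yz)$ and two associativity laws from products of the Mal'tsev terms $x(y\ldiv z)$ and $(x/y)z$, then deduces the fourth condition by a formal chain; your choice is the more transparent one. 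One small caution: Proposition~\ref{Pr:Inn} applies to \emph{subloops}, so your pointwise-fixing computation yields normality of $Z(Q)$ only after you check that $Z(Q)$ is closed under the operations; this is classical and routine, but it should be said.
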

\begin{proof}
We will prove two inclusions: $\gamma_{Z(Q)}\subseteq\zeta(Q)$ and $N_{\zeta(Q)}\subseteq Z(Q)$.

Let $a\,\gamma_{Z(Q)}\,b$. We want to show $a\,\zeta(Q)\,b$. Since $\zeta(Q)$ is the largest congruence such that $C(\zeta(Q),1_Q;0_Q)$, it is sufficient to show that (the congruence generated by) the pair $(a,b)$ centralizes $1_Q$ over $0_Q$. Let $t$ be a term and $u_1,\dots,u_n$, $v_1,\dots,v_n$ two tuples over $Q$. Assuming $t(a,u_1,\dots,u_n)=t(a,v_1,\dots,v_n)$, we get
\begin{multline*}
    t(b,u_1,\dots,u_n) =t(b/a\cdot a,u_1,\dots,u_n) = b/a\cdot t(a,u_1,\dots,u_n) \\
    = b/a\cdot t(a,v_1,\dots,v_n) =t(b/a\cdot a,v_1,\dots,v_n) =t(b,v_1,\dots,v_n),
\end{multline*}
where the second and the fourth equalities follow form the fact that $b/a\in Z(Q)$, i.e., $b/a$ commutes and associates with everything.

Conversely, let $a\in N_{\zeta(Q)}$, i.e., $a\,\zeta(Q)\,1$. We want to show that $a\in Z(Q)$. It actually suffices to show that $ab=ba$, $a(bc) = (ab)c$ and $b(ca) = (bc)a$ for every $b$, $c\in Q$, since then $b(ac) = b(ca) = (bc)a = a(bc) = (ab)c = (ba)c$, too. We will use $C(\zeta(Q),1_Q;0_Q)$ freely, noting that the equivalence modulo $0_Q$ is merely the equality.

For $ab=ba$, consider the term $t(x,y,z) = x(yz)$. Then $t(1,1,b)=b=t(b,1,1)$, and upon replacing the middle argument $1$ with $a$, we conclude that $ab = t(1,a,b) = t(b,a,1) = ba$. For $a(bc) = (ab)c$, consider the auxiliary term $m(x,y,z) = x(y\ldiv z)$. (This is in fact a Mal'tsev term for loops.) Then $m(1,a,a)m(b,1,c) = bc = m(1,1,b)m(c,c,c)$, and replacing the first argument $1$ with $a$ yields $a(bc) = m(a,a,a)m(b,1,c) = m(a,1,b)m(c,c,c) = (ab)c$. For $b(ca) = (bc)a$, we proceed dually and consider the auxiliary term $m'(x,y,z) = (x/y)z$. Then $m'(b,b,b)m'(c,1,1) = bc = m'(b,1,c)m'(a,a,1)$, and replacing the last argument $1$ with $a$ yields $b(ca) = m'(b,b,b)m'(c,1,a) = m'(b,1,c)m'(a,a,a) = (bc)a$.
\end{proof}

\begin{corollary}
A loop is abelian if and only if it is a commutative group.
\end{corollary}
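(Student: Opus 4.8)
The plan is to reduce the statement directly to Theorem~\ref{Th:Center} together with the definition of an abelian algebra. By definition, a loop $Q$ is abelian precisely when $\zeta(Q) = 1_Q$, and Theorem~\ref{Th:Center} identifies the two centers via $N_{\zeta(Q)} = Z(Q)$. So the whole argument hinges on translating the congruence condition $\zeta(Q) = 1_Q$ into the normal-subloop condition $Z(Q) = Q$, and then observing that $Z(Q) = Q$ is literally the assertion that $Q$ is a commutative group.

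First I would establish the equivalence $\zeta(Q) = 1_Q \iff N_{\zeta(Q)} = Q$. The forward direction is immediate: if $\zeta(Q) = 1_Q$, then $a\,\zeta(Q)\,1$ for every $a\in Q$, so $N_{\zeta(Q)} = Q$. For the converse, suppose $N_{\zeta(Q)} = Q$, i.e.\ $a\,\zeta(Q)\,1$ for every $a$. Since $\zeta(Q)$ is an equivalence relation, symmetry and transitivity give $a\,\zeta(Q)\,1\,\zeta(Q)\,b$, hence $a\,\zeta(Q)\,b$ for all $a,b\in Q$; that is, $\zeta(Q) = 1_Q$. Equivalently, one may invoke the fact that $\alpha\mapsto N_\alpha$ is a bijection sending the top congruence $1_Q$ to the top normal subloop $Q$.

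Combining this with Theorem~\ref{Th:Center} yields $\zeta(Q) = 1_Q \iff Z(Q) = Q$. The final step is to read off the definition of $Z(Q)$: the conditions $ax=xa$, $a(xy)=(ax)y$, $x(ay)=(xa)y$, $x(ya)=(xy)a$ say that each $a\in Z(Q)$ commutes and associates with all elements, so $Z(Q)=Q$ means that every element commutes and associates with every other, i.e.\ $Q$ is a commutative group; conversely, in a commutative group every element lies in the center, so $Z(Q)=Q$. Chaining the three equivalences gives the corollary.

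I do not expect a genuine obstacle here, since the heavy lifting was done in Theorem~\ref{Th:Center}. The only point requiring a moment's care is the seemingly trivial passage from $N_{\zeta(Q)}=Q$ back to $\zeta(Q)=1_Q$: this is exactly where the congruence--normal-subloop correspondence is used, and one must remember that a congruence is determined by (indeed, is in bijection with) its block at $1$, rather than silently assuming this at the level of elements.
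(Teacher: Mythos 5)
Your proposal is correct and follows essentially the same route as the paper's one-line proof, which reads ``$\zeta(Q)=1_Q$ iff $Z(Q)=Q$ iff $Q$ is commutative and associative''; you simply make explicit the two details the paper leaves implicit, namely that $N_{\zeta(Q)}=Q$ forces $\zeta(Q)=1_Q$ (via transitivity of the congruence, or the congruence--normal-subloop correspondence) and that $Z(Q)=Q$ is literally commutativity plus associativity. No gaps.
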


\begin{proof}
$\zeta(Q)=1_Q$ iff $Z(Q)=Q$ iff $Q$ is commutative and associative.
\end{proof}

Using the observations at the beginning of the section, this finishes the proof that universal algebraic nilpotency is the same notion as central nilpotency.

We want to point out that abelian groups and nilpotent loops are important classes of algebras in the abstract structure theory of universal algebra. Let $A$ be an algebra with a Mal'tsev term $m$. Choose an arbitrary element $e\in A$ and define $a+b=m(a,e,b)$.
The fundamental theorem of abelian algebras \cite[Section 5]{FM} says that if $A$ is abelian, then $(A,+)$ is an abelian group with unit $e$ (it actually states a stronger property: $A$ is polynomially equivalent to a module, whose group reduct is $(A,+)$).
According to \cite[Section 7]{FM}, if $A$ is nilpotent, then $(A,+)$ is a nilpotent loop with unit $e$ (no polynomial equivalence in this case).

\subsection{In groups}

It is instructive to look at how the commutator theory from universal algebra applies to groups. Unlike in loops, the standard commutator in groups is in accordance with the commutator theory. In fact, the situation in groups (and also in rings) gave rise to the general commutator theory. Nevertheless, an elementary proof that the two commutators agree in groups is not obvious, and the reader might want to look at one, for instance, in \cite{MS}.

In groups, unlike in loops, if $A$ is a normal subgroup of $G$ and $A$ itself is an abelian group, then $A$ is abelian in $G$.
It is interesting to see why. The following chain of equivalent conditions settles it: $A$ is abelian in $G$, $[\gamma_A,\gamma_A]=0_G$ (by definition), $[A,A]_G=1$ (\emph{because the two commutators agree}), $A$ is a commutative group (by definition of the commutator in groups), $Z(A)=A$ (by definition of the center in groups), $\zeta(A)=1_A$ (because the two centers agree), $[1_A,1_A]=0_A$, $A$ is abelian (by definition).
This explains why it is safe to call commutative groups by the traditional name abelian groups, and why it is not necessary to distinguish between normal subgroups that are abelian and normal subgroups that are abelian \emph{in} the enveloping group.

\begin{problem}
Investigate the commutator in varieties of loops close to groups, e.g., Moufang loops, Bruck loops or automorphic loops. Describe what does it mean for a subloop of $Q$ to be abelian in $Q$. Is congruence solvability equivalent to solvability here?
\end{problem}

\subsection{Alternative approaches to commutators, nilpotency and solvability}

Each of the following four paragraphs presents an alternative to what we have done here.

Recent discussions in the universal algebraic community seem to lead to a conclusion that the notion of nilpotency coming from the Freese-McKenzie commutator theory is too weak. A new approach, called \emph{supernilpotency}, based on Bulatov's higher commutators, has been promoted recently by Aichinger and Mudrinski \cite{AM}. An important property of supernilpotency, reflecting the situation in finite groups, is the following. A finite algebra with a Mal'tsev term (a finite loop in particular) is supernilpotent if and only if it is a direct product of nilpotent algebras of prime power size. Wright \cite{W} proved that a loop $Q$ satisfies the latter property if and only if $\mlt Q$ is nilpotent. A characterization of infinite supernilpotent loops, and more generally, calculation of higher commutators in the variety of loops, is an interesting open problem.

Yet another approach to nilpotency has been proposed by Mostovoy \cite{Mo}, using so-called \emph{commutator-associator filtration}. The relation between the commutator-associator filtrations and the universal algebraic approach is not clear.

Solvability in loops has been tackled by Lemieux et al. \cite{LMT} in connection with the question whether algebras can express arbitrary Boolean functions. They introduced the notion of \emph{polyabelianess}, a property of loops strictly between nilpotency and solvability (in the Bruck sense), and proved that a finite loop is polyabelian if and only it is \emph{not} able to express Boolean functions. It follows easily from the tame congruence theory \cite{HM} that, for finite algebras with a Mal'tsev term (finite loops in particular), solvability in the sense of commutator theory is equivalent to inability to express Boolean functions in the sense of \cite{LMT}. Hence, for finite loops, polyabelianess is the same as congruence solvability. The relation of the two notions in the infinite case is under investigation.

Finally, let us mention that in category theory, an alternative commutator theory, called the \emph{Huq commutator}, has been proposed. For groups, the two commutators agree. For loops, they do not \cite{HL}. Translating the Huq commutator into loop theory might identify important structural features in loops.

\section*{Acknowledgment}

The proof of Proposition \ref{Pr:InnGens} was suggested to us by A.~Dr\'apal. We acknowledge the assistance of the model builder \texttt{mace4} \cite{Mace4}, the Universal Algebra Calculator \cite{UAC}, and the \texttt{LOOPS} \cite{LOOPS} package for \texttt{GAP} \cite{GAP}---we used them to construct several examples and counterexamples. We thank an anonymous referee of an earlier version of the paper for many useful comments and suggestions for further research. We also thank an anonymous referee of this version for a thorough survey of related literature.

\end{document}